 \newtheorem{thm}{}[section]
 \newtheorem{theorem}[thm]{Theorem}
 \newtheorem{corollary}[thm]{Corollary}
 \newtheorem{lemma}[thm]{Lemma}
 \newtheorem{proposition}[thm]{Proposition}
 \theoremstyle{definition}
 \newtheorem{definition}[thm]{Definition}
 \theoremstyle{remark}
 \newtheorem{remark}[thm]{Remark}
 \newtheorem{example}[thm]{Example}
 \numberwithin{equation}{section}
\newcommand{\NN}{\ensuremath{\mathbb{N}}}
\newcommand{\XX}{\ensuremath{\mathbb{X}}}
\newcommand{\YY}{\ensuremath{\mathbb{Y}}}
\newcommand{\xx}{\ensuremath{\mathbf{x}}}
\newcommand{\yy}{\ensuremath{\mathbf{y}}}
\newcommand{\zz}{\ensuremath{\mathbf{z}}}
\newcommand{\ee}{\ensuremath{\mathbf{e}}}
\newcommand{\uu}{\ensuremath{\mathbf{u}}}
\newcommand{\sss}{\ensuremath{\mathbf{s}}}
\newcommand{\BB}{\ensuremath{\mathcal{B}}}
\newcommand{\GG}{\ensuremath{\mathcal{G}}}
\newcommand{\FF}{\ensuremath{\mathbb{F}}}
\newcommand{\FFF}{\ensuremath{\mathcal{F}}}
\newcommand{\GOW}{\operatorname{\mathrm{O}}}
\newcommand{\OO}{\operatorname{\mathcal{O}}}
\newcommand{\supp}{\operatorname{supp}}
\begin{document}

\title[Conditionality constants and superrflexivity]{Conditional quasi-greedy bases in non-superreflexive Banach spaces}

\author[F. Albiac]{Fernando Albiac}
\address{Mathematics Department\\ 
Universidad P\'ublica de Navarra\\
Campus de Arrosad\'{i}a\\
Pamplona\\ 
31006 Spain}
\email{fernando.albiac@unavarra.es}

\author[J. L. Ansorena]{Jos\'e L. Ansorena}
\address{Department of Mathematics and Computer Sciences\\
Universidad de La Rioja\\ 
Logro\~no\\
26004 Spain}
\email{joseluis.ansorena@unirioja.es}

\author[P. Wojtaszczyk]{Przemys\l{}aw Wojtaszczyk}
\address{Interdisciplinary Centre for Mathematical and Computational Modelling\\
University of Warsaw\\
 02-838 Warszawa\\
ul. Prosta 69\\
Poland}
\address{Institute of Mathematics of the Polish Academy of Sciences\\
00-656 Warszawa\\
ul. \'Sniadeckich 8\\
Poland}
\email{p.wojtaszczyk@icm.edu.pl}

\subjclass[2010]{46B15, 41A65}

\keywords{Thresholding greedy algorithm, conditional basis, conditionality constants, quasi-greedy basis, type, cotype, reflexivity, superreflexivity, super property, finite representability, Banach spaces}

\begin{abstract} For a conditional quasi-greedy basis $\BB$ in a Banach space the associated conditionality constants $k_{m}[\BB]$ verify the estimate $k_{m}[\BB]=\OO(\log m)$. Answering a question raised by Temly\-akov, Yang, and Ye, several authors have studied whether this bound can be improved when we consider quasi-greedy bases in some special class of spaces. It is known that every quasi-greedy basis in a superreflexive Banach space verifies $k_{m}[\BB]=(\log m)^{1-\epsilon}$ for some $0<\epsilon<1$, and this is optimal. Our first goal in this paper will be to fill the gap in between the general case and the superreflexive case and investigate the growth of the conditionality constants in non-superreflexive spaces. Roughly speaking, the moral will be that we can guarantee optimal bounds only for quasi-greedy bases in superreflexive spaces.  We prove that if a Banach space $\XX$ is not superreflexive then there is a quasi-greedy basis $\BB$ in a Banach space $\YY$ finitely representable in $\XX$ with $k_{m}[\BB] \approx \log m$. As a consequence we obtain that for every $2<q<\infty$ there is a Banach space $\XX$ of type $2$ and cotype $q$ possessing a quasi-greedy basis $\BB$ with 
$k_{m}[\BB] \approx \log m$. We also tackle the corresponding problem for Schauder bases and show that if a space is non-superreflexive then  it possesses a basic sequence $\BB$ with 
$k_m[\BB]\approx m$.
\end{abstract}

\thanks{F. Albiac and J.L. Ansorena were partially supported by the Spanish Research Grant \textit{An\'alisis Vectorial, Multilineal y Aplicaciones}, reference number MTM2014-53009-P. F. Albiac also acknowledges the support of Spanish Research Grant \textit{ Operators, lattices, and structure of Banach spaces},  with reference MTM2016-76808-P. P. Wojtaszczyk was partially supported by National Science Centre, Poland grant UMO-2016/21/B/ST1/00241.
}

\maketitle

\section{Introduction and background}
\label{Introduction}
\noindent 
Let $\XX$ be a Banach space over $\FF$, the real or complex scalar field. A sequence $(\xx_n)_{n=1}^\infty$ in $\XX$  is a \textit{(Schauder) basis} if 
 \begin{enumerate} 
\item[(i)] $[x_n \colon n\in\NN]=\XX$, 
\item[(ii)] there is a (unique) sequence $(\xx_n^*)_{n=1}^\infty$
in $\XX^*$ such that $\xx_n^*(\xx_k)=\delta_{k,n}$ for all $k$, $n\in\NN$, and
\item[(iii)] $K:=\sup_m \Vert S_m\Vert<\infty$,
where $S_m=S_m[\BB]=\sum_{n=1}^m \xx_n^* \otimes \xx_n$.
\end{enumerate}
The linear maps $\xx_n^*$, $n\in \NN$, are the \textit{biorthogonal functionals} associated to $(\xx_n)_{n=1}^\infty$,  the operators $S_m$, $m\in \NN,$ are the \textit{partial sum projections} of the basis and the number $K$ is the \textit{basis constant}.

A basis $(\xx_n)_{n=1}^\infty$ is said to be \textit{semi-normalized} if there are  constants $a$ and $b$ such that $0<a\le \Vert \xx_n\Vert \le b<\infty$ for all $n\in\NN$. In  case that $a=b=1$  the basis is called \textit{normalized}.
Let us recall the following simple and well known result.
\begin{lemma}[see e.g. \cite{Singer1970}*{Corollary 3.1}] \label{SemiNormalizedCharacterization} A basis $(\xx_n)_{n=1}^\infty$ 
 is semi-normalized if and only if 
\begin{equation*}
\textstyle
\sup_{n\in\NN} \max\{ \Vert \xx_n\Vert , \Vert \xx_n^*\Vert \} <\infty.
\end{equation*}
\end{lemma}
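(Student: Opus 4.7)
The plan is to prove both implications of the biconditional using only the identity $\xx_n^*\otimes\xx_n = S_n - S_{n-1}$ (where $S_0 = 0$), the biorthogonality relation $\xx_n^*(\xx_n)=1$, and the fact that the basis constant $K = \sup_m \|S_m\|$ is finite.

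For the forward implication, I would assume $0 < a \leq \|\xx_n\| \leq b < \infty$ for all $n$. The upper bound on $\|\xx_n\|$ is already half of what we want, so the content lies in bounding $\|\xx_n^*\|$. For any $x \in \XX$, the rank-one operator identity gives $\xx_n^*(x)\,\xx_n = S_n(x) - S_{n-1}(x)$, and taking norms produces $|\xx_n^*(x)|\,\|\xx_n\| \leq 2K\|x\|$. Dividing by $\|\xx_n\| \geq a$ yields $\|\xx_n^*\| \leq 2K/a$, so the supremum is finite.

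For the reverse implication, assume $M := \sup_n \max\{\|\xx_n\|,\|\xx_n^*\|\} < \infty$. The upper bound $\|\xx_n\| \leq M$ is immediate. To get the lower bound, apply biorthogonality: $1 = |\xx_n^*(\xx_n)| \leq \|\xx_n^*\|\,\|\xx_n\| \leq M\,\|\xx_n\|$, which forces $\|\xx_n\| \geq 1/M$. Hence the basis is semi-normalized with constants $a = 1/M$ and $b = M$.

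Neither direction is really an obstacle; the only thing to be careful about is that the forward direction genuinely uses both the basis constant $K$ (to bound the partial sum projections) and the lower bound $a$ on the norms $\|\xx_n\|$, while the reverse direction needs only biorthogonality and the uniform bound $M$. The whole argument fits in a few lines and is the standard duality computation for coordinate functionals of a Schauder basis.
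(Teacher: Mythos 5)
Your argument is correct and is exactly the standard duality computation that the paper relies on by citing Singer's Corollary 3.1 rather than proving the lemma itself: the identity $\xx_n^*\otimes\xx_n=S_n-S_{n-1}$ together with the lower bound $a$ gives $\Vert \xx_n^*\Vert\le 2K/a$, and biorthogonality gives the reverse bound $\Vert\xx_n\Vert\ge 1/M$. Nothing differs in substance from the referenced proof, so there is no gap to report.
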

An easy consequence of  Lemma~\ref{SemiNormalizedCharacterization} is that a Banach space equipped with a semi-normalized basis $\BB$ can be renormed so that $\BB$ becomes normalized. Thus those results enunciated for normalized bases still hold true for semi-normalized bases.

Let $\BB=(\xx_n)_{n=1}^\infty$ be a semi-normalized basis of a Banach space $\XX$. By Lemma~\ref{SemiNormalizedCharacterization} we have a continuous linear operator $\FFF\colon \XX \to c_0$ given by
\[
 f \mapsto (\xx_n^*(f))_{n=1}^\infty.
\]
Hence, for any $f\in \XX$ there is an injective map $\rho\colon\NN\to\NN$ (an \textit{ordering} of $\NN$) such that 
\begin{equation}
\label{greedycondition}
|\xx_{\rho(k)}^{\ast}(f)|\ge |\xx_{\rho(n)}^{\ast}(f)| \quad\text{if}\quad k\le n.
\end{equation}
If the sequence $(\xx_n^*(f))_{n=1}^\infty$ contains several terms with the same absolute value then such an ordering is not uniquely determined. 
In order to get uniqueness we impose the additional condition
\begin{equation}
\label{desempate}
 \rho(k)\le \rho(n) \quad\text{whenever}\quad |\xx_{\rho(k)}^{\ast}(f)|= |\xx_{\rho(n)}^{\ast}(f)|.
\end{equation}
If $f$ is infinitely supported there is a unique ordering $\rho$ of $\NN$ that verifies \eqref{greedycondition} and \eqref{desempate}; moreover such an ordering verifies $\rho(\NN)=\supp (f)$. In case that $f$ is finitely supported there is a unique ordering $\rho$ that verifies \eqref{greedycondition}, \eqref{desempate}, and $\rho(\NN)=\NN$. In both cases we will refer to such a unique ordering as the \textit{greedy ordering} for $f$. 

For $m\in\NN$ let us consider the (non-linear  nor continuous) operator
\[
\textstyle
 \GG_m\colon \XX\to\XX, \quad x\mapsto \sum_{n=1}^m \xx^*_{\rho(n)}(f) \xx_{\rho(n)},
 \]
where $\rho$ is the greedy ordering for $x$. The sequence of maps $(\mathcal G_{m})_{m=1}^{\infty}$ is called the \textit{greedy algorithm} in $\XX$ associated to the basis $(\xx_{n})_{n=1}^{\infty}$. A semi-normalized basis $(\xx_n)_{n=1}^\infty$ is \textit{quasi-greedy} if and only if $\Gamma:=\sup_m \Vert \GG_m\Vert<\infty$ (see \cite{Wo2000}*{Theorem 1}). We will refer to $\Gamma$ as the \textit{quasi-greedy constant} of the basis (see \cite{AA2016, AlbiacAnsorena2017} for a detailed discussion on quasi-greedy constants).

 Given a basis $(\xx_n)_{n=1}^\infty$ in a Banach space $\XX$ and $A$ a finite subset of $\NN$, the \textit{coordinate projection} on $A$ is the linear operator
 \[
 \textstyle
S_A\colon \XX\to \XX, \quad f\mapsto \sum_{n\in A} \xx_n^*(f) \xx_n.
\]
The basis $(\xx_n)_{n=1}^\infty$ is \textit{unconditional} if and only if
 $\sup_{A \text{ finite}} \Vert S_A\Vert<\infty$ (see \cite{AlbiacKalton2016}*{Proposition 3.1.5}); otherwise $(\xx_n)_{n=1}^\infty$ is said to be \textit{conditional}. The conditionality of a basis $\BB=(\xx_n)_{n=1}^\infty$ in a Banach space $\XX$ can be measured in terms of the growth of the sequence 
\[
\textstyle
k_m :=k_m[\BB]
=\sup_{|A|\le m} \Vert S_A\Vert, \quad m=1,2,\dots.
\]
Being quasi-greedy is formally a weaker condition than being semi-normalized and unconditional. Indeed, Wojtaszczyk \cite{Wo2000} proved that in a wide class of separable Banach spaces there exist conditional quasi-greedy bases, showing this way that those two concepts are different.

In approximation theory, the sequence $(k_{m})_{m=1}^{\infty}$ is used to quantify the performance of the greedy algorithm with respect to the \textit{best $m$-term approximation error}. Indeed, if $C_{m}$ denotes the smallest number such that
\[
\Vert x-\mathcal G_{m}(x)\Vert\le C_{m}\inf\left\{\left\Vert x-\sum_{j\in A}a_{j}\xx_{j}\right\Vert : a_{j}\in \FF, |A|\le m\right\}, \quad \forall x\in \XX,
\]
then there is a close relation between $C_m$ and $k_m$. For instance,  it was proved in \cites{GHO2013,TemlyakovYangYe2011} that $C_{m}\approx k_{m}$ when $\BB$ is an \textit{almost greedy} basis (i.e., quasi-greedy and democratic). 

For every basis $\BB$ in a Banach space one always has the estimate 
$
 k_m[\BB] \lesssim m \text{ for } m\in\NN
$; this is the best one can hope for in general since there are semi-normalized bases  $\BB$, such as the summing basis of $c_{0}$, for which $ k_m[\BB] \approx m$  for $m\in\NN$. However, when the basis is quasi-greedy the size of the terms of the sequence $(k_m[\BB])_{m=1}^{\infty}$ is controlled by function that grows more slowly:
\begin{theorem}[\cite{DKK2003}*{Lemma 8.2}] \label{LogEstimate} If $\BB$ is a quasi-greedy basis in a Banach space $\XX$ then
\begin{equation}\label{newlabeleq}
k_m[\BB]
\lesssim \log m \text{ for }m\ge 2.
\end{equation}
\end{theorem}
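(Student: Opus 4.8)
The plan is to derive the estimate from the \emph{truncation property} of quasi-greedy bases together with a dyadic splitting of the coefficients of a vector. Since $\BB=(\xx_n)_{n=1}^\infty$ is semi-normalized, by Lemma~\ref{SemiNormalizedCharacterization} the constants $b:=\sup_n\|\xx_n\|$ and $b^*:=\sup_n\|\xx_n^*\|$ are finite. The key input, itself a standard consequence of the quasi-greediness of $\BB$, is that there is a constant $C_t$ depending only on the quasi-greedy constant $\Gamma$ such that
\begin{equation*}
\Bigl(\min_{n\in B}|\xx_n^*(f)|\Bigr)\,\Bigl\|\sum_{n\in B}\sgn(\xx_n^*(f))\,\xx_n\Bigr\|\le C_t\,\|f\|
\qquad\text{for every }f\in\XX\text{ and every finite }B\subseteq\supp(f);
\end{equation*}
roughly speaking, flattening the coefficients of $f$ on a finite set to their common least value (keeping signs) cannot enlarge the norm by more than a fixed factor.

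Fix $m\ge2$ and $A$ with $|A|\le m$; it is enough to bound $\|S_Af\|$ for $\|f\|=1$, and, replacing $A$ by $A\cap\supp(f)$, we may assume $A\subseteq\supp(f)$. Write $c_n=\xx_n^*(f)$, so $|c_n|\le b^*$ for all $n$, and split $A=A^\flat\sqcup A^\sharp$ at the level $b^*/m$, where $A^\flat=\{n\in A:|c_n|\le b^*/m\}$ and $A^\sharp=\{n\in A:|c_n|>b^*/m\}$. The part $A^\flat$ is handled by brute force:
\begin{equation*}
\|S_{A^\flat}f\|\le\sum_{n\in A^\flat}|c_n|\,\|\xx_n\|\le |A^\flat|\cdot\frac{b^*}{m}\cdot b\le b\,b^*,
\end{equation*}
so everything hinges on the set $A^\sharp$ of ``large'' coefficients.

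For $A^\sharp$ I would use the dyadic layers $D_j=\{n\in A^\sharp:\ b^*2^{-j-1}<|c_n|\le b^*2^{-j}\}$, $j\ge0$. Because the coefficients on $A^\sharp$ exceed $b^*/m$ in modulus, $D_j=\emptyset$ whenever $2^{-j}\le1/m$, so only the $J:=\lceil\log_2 m\rceil$ layers $j=0,\dots,J-1$ can be nonempty and $A^\sharp=\bigsqcup_{j=0}^{J-1}D_j$. The crux is that $\|S_{D_j}f\|\le2C_t$ \emph{uniformly in $j$}. To see this, write $D_j=\{n_1,\dots,n_p\}$ with $|c_{n_1}|\ge\dots\ge|c_{n_p}|$, set $u_l=\sum_{k=1}^{l}\sgn(c_{n_k})\,\xx_{n_k}$, and apply summation by parts:
\begin{equation*}
S_{D_j}f=\sum_{l=1}^{p}|c_{n_l}|\,\sgn(c_{n_l})\,\xx_{n_l}
=\sum_{l=1}^{p-1}\bigl(|c_{n_l}|-|c_{n_{l+1}}|\bigr)\,u_l+|c_{n_p}|\,u_p.
\end{equation*}
The truncation property applied to $B=\{n_1,\dots,n_l\}$, whose least coefficient modulus is $|c_{n_l}|>b^*2^{-j-1}$, gives $\|u_l\|\le C_t/|c_{n_l}|<C_t\,2^{j+1}/b^*$ for all $l$; since the scalar weights above are nonnegative and telescope to $|c_{n_1}|\le b^*2^{-j}$, we get $\|S_{D_j}f\|\le(\max_l\|u_l\|)\,|c_{n_1}|\le2C_t$. (The dyadic choice of layers is what makes this work: inside one layer the ratio largest/smallest coefficient is at most $2$, so the telescoped sum costs nothing.) Summing over $j$ yields $\|S_{A^\sharp}f\|\le2C_t\lceil\log_2 m\rceil$, whence
\begin{equation*}
\|S_Af\|\le\|S_{A^\sharp}f\|+\|S_{A^\flat}f\|\le2C_t\lceil\log_2 m\rceil+b\,b^*\lesssim\log m\qquad(m\ge2),
\end{equation*}
with implied constant depending only on $\Gamma$, $b$ and $b^*$. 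Taking the supremum over $|A|\le m$ gives $k_m[\BB]\lesssim\log m$.

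The genuine difficulty is entirely concentrated in the truncation property; the remainder is a dyadic book-keeping plus one summation by parts. The points requiring (routine) care are that the coordinates of $f$ off $\supp(f)$ may be discarded when estimating $S_Af$, that the layers with $2^{-j}\le1/m$ are really empty so the layer count is $O(\log m)$, and that the truncation property is being used for the finite sets $\{n_1,\dots,n_l\}\subseteq\supp(f)$ --- which need not be greedy sets of $f$, so one does need the general form of that property, not merely its version for threshold sets.
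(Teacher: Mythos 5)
The paper does not actually prove this statement---it is quoted from \cite{DKK2003}*{Lemma 8.2}---so there is no internal proof to compare against; your argument is correct and is essentially the standard one from that literature: split the coefficients of $f$ over the set $A$ into $O(\log m)$ dyadic layers, control the projection onto each layer by Abel summation against the truncation/flattening estimate, and absorb the sub-threshold part trivially using $|A|\le m$. The only ingredient you leave unproved is the truncation property for an arbitrary finite $B\subseteq\supp(f)$; this is indeed a genuine consequence of quasi-greediness (combine the threshold-set version, which follows from the uniform boundedness of the greedy projections, with the fact that quasi-greedy bases are unconditional for constant coefficients and that such sign-sums pass to subsets with controlled constant), and you correctly flag that the subset form, not merely the threshold form, is what your sets $\{n_1,\dots,n_l\}$ require. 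The remaining bookkeeping --- the layer count $\lceil\log_2 m\rceil$, the telescoping bound $\|S_{D_j}f\|\le 2C_t$, and the estimate $\|S_{A^\flat}f\|\le b\,b^*$ --- all checks out.
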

 Temlyakov, Yang, and Ye asked in \cite{TemlyakovYangYeB} whether this bound was optimal or could be improved in special cases like Hilbert spaces. The first question was answered by Garrig\'os et al., who in \cite{GHO2013} provided examples of quasi-greedy bases for which the estimate \eqref{newlabeleq} is sharp, i.e., $k_m[\BB] \approx \log m$ for $m\ge 2$. The role played by the underlying Banach space was revealed in \cite{GW2014}, where Garrig\'os and Wojtaszczyk showed that the conditionality constants of any quasi-greedy basis for a separable $L_p$-space ($1<p<\infty$) verify the better upper estimate $k_m[\BB]\lesssim (\log m)^{\alpha} \text{ for }m\ge 2$ for some $0<\alpha<1$. Later on, Albiac et al.\  extended Garrig\'os-Wojtaszczyk's result to superreflexive Banach spaces. Recall that a Banach space $\XX$ is said to be superreflexive if any Banach space finitely representable in $\XX$ is reflexive.

 \begin{theorem}[\cite{AAGHR2015}*{Theorem 1.1}]\label{LogEstimateSuperR} For any quasi-greedy basis  $\BB$ in a superreflexive Banach space $\XX$ there is a constant $0<\alpha<1$ such that 
\begin{equation}\label{LogEstimateSuperRest}
k_m[\BB]
\lesssim (\log m)^{\alpha} \text{ for }m\ge 2.
\end{equation}
\end{theorem}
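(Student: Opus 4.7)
The plan is to exploit the geometric richness of superreflexive spaces. By the Enflo--Pisier renorming theorem, any superreflexive $\XX$ admits an equivalent uniformly convex (hence uniformly smooth) norm with modulus of power type; in particular, $\XX$ has nontrivial Rademacher type $p\in(1,2]$ and finite cotype $q\in[2,\infty)$, and so does $\XX^\ast$. The goal is to extract from $p$ and $q$ an exponent $\alpha=\alpha(p,q)<1$ satisfying~\eqref{LogEstimateSuperRest}. By Lemma~\ref{SemiNormalizedCharacterization} I may and do assume $\BB$ is normalized; let $\Gamma$ denote its quasi-greedy constant.

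Fix $f\in\XX$ of norm one and $A\subset\NN$ with $|A|=m$, and write $a_n=\xx_n^\ast(f)$, normalizing so that $\max_n|a_n|\le 1$. The heart of the argument is a dyadic decomposition of $A$ by coefficient magnitude:
\[
 A_k=\{n\in A:2^{-k-1}\le|a_n|<2^{-k}\}\quad(0\le k\le K),\qquad A_\ast=A\setminus\bigcup_{k=0}^{K}A_k,
\]
with $K\approx\log m$. The tail contributes $\|S_{A_\ast}f\|\le m\cdot 2^{-K}=\OO(1)$. Each block $S_{A_k}f$ has coefficients of essentially constant modulus $\approx 2^{-k}$, and the greedy level sets $G_k:=\{n:|a_n|\ge 2^{-k}\}$ identify $S_{A_k}f$ with a telescoping difference $S_{A\cap G_k}f-S_{A\cap G_{k-1}}f$. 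Combining quasi-greediness with the truncation (equivalently, sign-invariance) property of quasi-greedy bases, exploited in \cite{DKK2003,GHO2013}, yields $\|S_{A_k}f\|\lesssim 1$ with an implicit constant depending only on $\Gamma$. Summing the $K\approx\log m$ blocks by the triangle inequality only recovers Theorem~\ref{LogEstimate}.

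The improvement to a sub-logarithmic bound comes from replacing triangle-inequality summation by an averaging argument. I would randomize the blocks by independent Bernoulli signs, apply Kahane's inequality, and combine the type-$p$ inequality (upper side) with the cotype-$q$ inequality (to bring in $\|f\|$ on the lower side), using that a quasi-greedy basis is sign-invariant up to the factor $\Gamma$. This produces an estimate
\[
 \Bigl\|\sum_{k=0}^{K}S_{A_k}f\Bigr\|\lesssim K^{\beta}\,\|f\|
\]
with an explicit $\beta=\beta(p,q)\in(0,1)$, whence $k_m[\BB]\lesssim(\log m)^{\beta}$. The main obstacle will be reconciling quasi-greediness, which only controls projections onto the \emph{actual} greedy set of a given vector, with the random-sign formalism of Rademacher type and cotype. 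The bridge is the sign-invariance property together with the observation that within each $A_k$ the coefficients have essentially constant magnitude, so a sign-randomization on $A_k$ is comparable to the deterministic projection $S_{A_k}f$; keeping all the implicit constants under control and pushing the final exponent strictly below $1$ is the delicate step that makes type and cotype jointly indispensable.
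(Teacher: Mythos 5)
You should first note that the paper does not prove Theorem~\ref{LogEstimateSuperR} at all: it is imported verbatim from \cite{AAGHR2015}, so the comparison has to be with that proof. The first half of your outline --- slicing $A$ into levels $A_k$ by coefficient magnitude, discarding an $\OO(1)$ tail, and bounding each $\Vert S_{A_k}f\Vert$ by a constant multiple of $\Vert f\Vert$ via differences of greedy projections onto level sets together with unconditionality for constant coefficients --- is exactly the standard proof of Theorem~\ref{LogEstimate} and is fine. The improvement step, however, has a genuine gap. To use type you must first dominate the deterministic quantity $\Vert\sum_{k\le K}S_{A_k}f\Vert$ by the Rademacher average $\mathbb{E}_\epsilon\Vert\sum_{k\le K}\epsilon_k S_{A_k}f\Vert$, and dually, to bring $\Vert f\Vert$ in through cotype you must dominate such an average (applied to blocks of $f$ itself) by $\Vert f\Vert$. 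The sign-invariance enjoyed by quasi-greedy bases holds only for vectors whose nonzero coefficients have uniformly comparable modulus, i.e.\ \emph{within} a single level $A_k$; it says nothing about flipping signs \emph{across} levels, where the moduli differ by factors $2^{k}$. But the conditionality you are trying to estimate is precisely the failure of cross-level sign/projection invariance, so postulating a comparison there with constants independent of $K$ is circular, and for a conditional basis no such comparison exists.

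More decisively, no argument whose exponent depends only on the type $p$ and cotype $q$ of $\XX$ can possibly close, because type and cotype are super-properties shared by non-superreflexive spaces: Theorem~\ref{GoodTypeButBadConstants} of this very paper, built on the Pisier--Xu spaces \cite{PisierXu1987}, exhibits for each $q>2$ a space of type $2$ and cotype $q$ carrying a quasi-greedy basis with $k_m[\BB]\approx\log m$. If your randomization scheme worked, it would contradict that theorem. The proof in \cite{AAGHR2015} instead uses the strictly stronger geometric information given by Pisier's renorming theorem: an equivalent norm on $\XX$ that is uniformly convex with modulus of power type $q$. The uniform boundedness of the greedy partial sums, $\sup_m\Vert\GG_m(f)\Vert\le\Gamma\Vert f\Vert$, combined with the power-type modulus, yields a summability estimate for the successive greedy blocks $f_j$ of $f$, roughly $\sum_j\Vert f_j\Vert^{q}\lesssim\Vert f\Vert^{q}$, and then H\"older's inequality over the $K\approx\log m$ levels gives $k_m[\BB]\lesssim(\log m)^{1-1/q}$. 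That block-summability lemma is a consequence of the power-type modulus of convexity, not of Rademacher cotype, and it is the missing key ingredient: to salvage your outline, replace the random-sign/type--cotype step by it.
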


 Moreover, the estimate \eqref{LogEstimateSuperRest} cannot be improved even in Hilbert spaces. This is shown in \cite{GW2014} and relies on a general method inspired by  Olevskii for obtaining a basis for the direct sum $\XX\oplus\ell_2$ from a basis of $\XX$. We will refer to this method as the Garrig\'os-Olevskii-Wojtaszczyk method (GOW-method for short) 
and  we will denote by  $\GOW(\BB)$ the basis we obtain after applying the GOW-method to a certain basis $\BB$.

We have the following result.
 \begin{theorem}[\cite{GW2014}*{Lemma 3.4 and Lemma 3.5}]\label{TaylorGW2014} Let $\BB$ be a semi-normalized basis of a Banach space  $\XX$. Then $\GOW(\BB)=(\yy_n)_{n=1}^{\infty}$ is a quasi-greedy and democratic basis for $\XX\oplus\ell_2$ such that
  $ \Vert \sum_{n\in A} \yy_n \Vert \approx |A|^{1/2}$ for every finite set $A\subseteq\NN$.
 \end{theorem}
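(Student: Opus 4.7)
The plan is to spell out the GOW-construction explicitly and then check the three assertions — Schauder basis, democratic with square-root growth, and quasi-greedy — one after another. Partition $\NN$ into consecutive blocks $I_k$ of geometrically increasing sizes, say $|I_k|=2^k$, and fix the canonical orthonormal basis $(\ee_n)_{n=1}^{\infty}$ of $\ell_2$. On each block $I_k$ pick an orthogonal $|I_k|\times |I_k|$ matrix $U_k$ (for instance a normalised Walsh/Hadamard matrix) and define
\[
 \yy_n = a_k\,\xx_n + b_k\sum_{j\in I_k}(U_k)_{n,j}\,\ee_j, \qquad n\in I_k,
\]
with scalars $a_k,b_k>0$ tuned so that $\Vert\yy_n\Vert\approx 1$. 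That $(\yy_n)_{n=1}^{\infty}$ is a Schauder basis of $\XX\oplus\ell_2$ is then a consequence of the block-diagonal structure: the change of coordinates between $(\xx_n,\ee_n)_{n\in I_k}$ and $(\yy_n)_{n\in I_k}$ is uniformly invertible, so the new partial sum operators $S_m[\GOW(\BB)]$ differ from the natural blockwise partial sum projections by uniformly bounded corrections controlled by the basis constants of $\BB$ and of $(\ee_n)$.

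For the democratic estimate, fix a finite $A\subseteq\NN$ and expand $\sum_{n\in A}\yy_n$. The coefficient of $\ee_j$ with $j\in I_k$ is $b_k\sum_{n\in A\cap I_k}(U_k)_{n,j}$; orthonormality of the rows of $U_k$ and Parseval then give the $\ell_2$-norm of this component exactly as $\bigl(\sum_k b_k^2|A\cap I_k|\bigr)^{1/2}$. With $b_k$ taken constant this equals $b\,|A|^{1/2}$, which supplies the lower bound. For the $\XX$-component $\sum_k a_k\sum_{n\in A\cap I_k}\xx_n$, an appropriate decay of $a_k$ as a function of $|I_k|$ and of the basis constant of $\BB$ forces its norm to be $O(|A|^{1/2})$, giving $\Vert\sum_{n\in A}\yy_n\Vert\approx|A|^{1/2}$.

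The main obstacle, as expected, is the quasi-greedy property. Given $f\in\XX\oplus\ell_2$, the greedy sum $\GG_m(f)$ selects the $m$ largest $\yy_n^*$-coefficients of $f$. The orthogonal mixing $U_k$ ensures that inside each block $I_k$ the $\yy_n^*$-coefficients of $f$ are comparable, up to uniform constants, to the $\ee_j^*$-coefficients of the $\ell_2$-part of $f$, so greedy selection on the block mimics greedy selection for the unconditional basis $(\ee_j)_{j\in I_k}$ of $\ell_2$, which is trivially quasi-greedy. The $\xx_n$-side contributes a correction term whose potentially conditional behaviour is tamed by the decay built into $a_k$. Assembling the block-by-block estimates then yields a uniform bound on $\Vert\GG_m\Vert$. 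Making these bounds quantitative, with constants independent of $k$ and of the conditionality of $\BB$, is the delicate technical point, and it is precisely this choice of parameters that allows the GOW-method to succeed for an arbitrary semi-normalized basis $\BB$.
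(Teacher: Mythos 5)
Your construction is not the GOW-method, and the statement cannot be proved from it. The paper itself offers no proof here (it quotes \cite{GW2014}*{Lemmas 3.4 and 3.5}), so the benchmark is the Olevskii-type construction used there, in which each block of the new system is attached to a \emph{single} vector of $\BB$: one takes an $N_k\times N_k$ orthogonal matrix $A^{(k)}=(a^{(k)}_{i,j})$ whose first row is constantly $N_k^{-1/2}$ and sets $\yy_{k,j}=N_k^{-1/2}\xx_k\oplus\sum_{i=2}^{N_k}a^{(k)}_{i,j}\ee_{k,i}$ for $1\le j\le N_k$, so that the $N_k$ vectors of the $k$-th block span exactly $[\xx_k]\oplus[\ee_{k,i}\colon 2\le i\le N_k]$ and the whole system spans $\XX\oplus\ell_2$. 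Your version pairs each individual $\xx_n$, $n\in I_k$, with a mixed $\ell_2$-block, and this already fails at the basis stage: the block contributes only $|I_k|$ vectors, all lying in the $2|I_k|$-dimensional space $[\xx_n\colon n\in I_k]\oplus[\ee_j\colon j\in I_k]$, so there can be no ``change of coordinates between $(\xx_n,\ee_n)_{n\in I_k}$ and $(\yy_n)_{n\in I_k}$'', and the closed span of $(\yy_n)_{n=1}^\infty$ is a proper subspace of $\XX\oplus\ell_2$. Concretely, the functional $(\xx_1^*,h)$, with $h$ supported on the block $I_{k_0}$ containing $n=1$ and chosen so that $U_{k_0}(h|_{I_{k_0}})=-(a_{k_0}/b_{k_0})$ times the first coordinate vector, annihilates every $\yy_n$, so your system does not span $\XX\oplus\ell_2$.

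Independently of this, the quasi-greedy step, which is the heart of the statement, is asserted rather than proved. For a general conditional $\BB$, the partial sums $\sum_{n\in G\cap I_k}c_n\xx_n$ over a greedy set $G$ are not controlled by $\Vert\sum_{n\in I_k}c_n\xx_n\Vert$, and assembling your blockwise bounds introduces a factor of the number of blocks met by $G$, not a uniform constant; the ``decay built into $a_k$'' is exactly what would need to be quantified, and if it were strong enough to tame an arbitrary conditional basis it would also tend to erase the conditionality of $\BB$ inside the new system, whereas the whole point of $\GOW(\BB)$ in this paper is that this conditionality survives (this is what Theorem~\ref{ConditionalityGOW} exploits). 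The genuine construction sidesteps the difficulty structurally: within a block all $\yy_{k,j}$ share the same $\XX$-component $N_k^{-1/2}\xx_k$, so a coordinate projection only rescales the coefficient of $\xx_k$ by an average of the selected coefficients, which is controlled for greedy sets, while the $\ell_2$-part is handled by unconditionality; the democratic upper bound also uses this single-vector-per-block feature, not mere decay of weights. To repair the argument you would need to reproduce this construction (or supply a genuinely different one that provably spans $\XX\oplus\ell_2$) and carry out the greedy-set estimates quantitatively; as written, neither the basis property nor quasi-greediness is established.
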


Continuing in the spirit of the problem raised by Temlyakov et al.\ in \cite{TemlyakovYangYeB}, at this point one may wonder if there is wider class of Banach spaces for which Theorem~\ref{LogEstimateSuperR} still holds. In \cite{AAGHR2015} it is showed that it is hopeless to try with reflexive Banach spaces. Another possibility to further research in this direction is to pay attention to the (Rademacher) type of the space. Indeed, as an immediate consequence of a theorem of Maurey and Pisier (\cite{MP1976}), every superreflexive Banach space has both non-trivial type and a non-trivial cotype. Notice also that all Banach spaces constructed in \cites{GW2014, AAGHR2015} with a quasi-greedy basis whose conditionality constant sequence is of the same order as $(\log m)_{m=2}^\infty$ are of type $1$. Thus it seems natural to investigate if Theorem~\ref{LogEstimateSuperR} can be extended to Banach spaces with non-trivial type. In this paper we solve this question in the negative:
\begin{theorem}\label{GoodTypeButBadConstants}
Let $2<q<\infty$ (respectively, $1<q<2$). There is a quasi-greedy basis $\BB$ for a Banach space $\XX$ of type $2$ and cotype $q$ (respectively, type $q$ and cotype $2$) with
\[
k_m[\BB]
 \approx \log m \text{ for }m\ge 2.
\]
\end{theorem}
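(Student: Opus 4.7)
The key observation is that this theorem is a direct corollary of the paper's main result announced in the abstract: if $\XX$ is not superreflexive, then there is a quasi-greedy basis $\BB$ in some Banach space $\YY$ finitely representable in $\XX$ with $k_m[\BB]\approx\log m$. My plan is therefore to produce, for each prescribed pair of type/cotype parameters, a non-superreflexive Banach space $\XX$ that already carries the required type and cotype, and then apply the main theorem to that $\XX$.

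For the range $2<q<\infty$ I would take $\XX = J \oplus \ell_q$, where $J$ is a non-reflexive Banach space of type $2$ and cotype $2$ (for instance a suitable James-type quasi-reflexive space, or any of the known non-superreflexive spaces of type $2$ and cotype $2$). The summand $\ell_q$ contributes cotype $q$ and preserves type $2$, while non-reflexivity survives through the $J$ summand; hence $\XX$ is non-superreflexive and has type $2$ and cotype $q$. For the range $1<q<2$, the symmetric choice $\XX = J \oplus \ell_q$ yields type $q$ and cotype $2$ by the analogous reasoning, since $\ell_q$ has type $q$ and cotype $2$ in this range.

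The conclusion then relies on the standard fact that Rademacher type and cotype, together with their constants, are stable under finite representability: the defining two-sided Rademacher inequalities involve only finitely many vectors, so they transfer with the same constants from $\XX$ to every finite-dimensional subspace of $\YY$, and therefore to $\YY$ itself. Combining this with the main theorem of the paper, the basis $\BB$ produced in $\YY$ has the desired conditionality growth $k_m[\BB]\approx\log m$ and lives in a space with the prescribed type and cotype. The main obstacle in this argument is squarely the preliminary step of pinpointing a concrete non-superreflexive Banach space with a sharply tuned pair of type and cotype — the rest of the deduction, once such an $\XX$ is in hand, is a short invocation of the paper's main theorem together with the finite-representability stability of type and cotype.
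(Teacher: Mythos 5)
Your overall strategy is legitimate and is in fact the route the paper itself sketches in the introduction: combine Theorem~\ref{CharacterizationSuperR} with the existence of non-reflexive spaces of ``good'' type and cotype, using that type and cotype (being defined by finitely many vectors) pass with the same constants to any space finitely representable in $\XX$. That last transfer step is fine. The genuine gap is in the step you yourself flag as the main obstacle: the building block you propose does not exist. By Kwapie\'n's theorem, a Banach space of type $2$ and cotype $2$ is isomorphic to a Hilbert space, hence superreflexive; so there is no non-reflexive (let alone non-superreflexive) space $J$ of type $2$ and cotype $2$, and in particular no James-type quasi-reflexive space has this pair of properties. Consequently $\XX=J\oplus\ell_q$ cannot be formed as described, in either range of $q$, and the whole difficulty of Theorem~\ref{GoodTypeButBadConstants} is precisely that non-reflexivity forces the type/cotype pair away from $(2,2)$, so it cannot be ``repaired'' by adding an $\ell_q$ summand to a Hilbert-like non-reflexive core.

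The correct input, and the one the paper uses, is the Pisier--Xu construction: the interpolation spaces $(v_1,\ell_\infty)_{\theta,q}$ (equivalently $(v_1^0,c_0)_{\theta,q}$) are non-reflexive and have type $2$ and cotype $q$ when $2<(1-\theta)^{-1}\le q$, respectively type $q$ and cotype $2$ when $q\le(1-\theta)^{-1}<2$. With $\XX$ replaced by such a space your deduction via Theorem~\ref{CharacterizationSuperR} would indeed yield the statement, though non-constructively. Note that the paper's actual proof in Section~\ref{PisierXu1987} avoids the finite-representability detour altogether: it shows the canonical basis of $(v_1^0,c_0)_{\theta,q}$ is of type P (Proposition~\ref{BasisPisierXu}) and then applies Theorem~\ref{TailoringQGB} (the $\diamond$ construction followed by the GOW-method) to produce an explicit quasi-greedy basis of $\XX\oplus\XX\oplus\ell_2$ with $k_m\approx\log m$, a space with the same type and cotype as $\XX$.
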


Theorem~\ref{GoodTypeButBadConstants} makes us suspect  the existence of a tight connection between superreflexivity and optimal bounds for quasi-greedy bases. Of course, the right language to express this connection is using ``super-properties'', which is precisely what our next two theorems attain:
\begin{theorem}\label{CharacterizationSuperR}

A Banach space $\XX$ is non-superreflexive if and only if  there is a quasi-greedy basis $\BB$ for a
 Banach space $\YY$ finitely representable in $\XX$ with
\[
k_m[\BB] 
\approx \log m \text{ for }m\ge 2.
\]
\end{theorem}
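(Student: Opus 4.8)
The plan is to establish the two directions separately, with the forward direction being essentially routine and the reverse direction carrying the weight of the argument. For the ``if'' direction, suppose $\YY$ is finitely representable in $\XX$ and carries a quasi-greedy basis $\BB$ with $k_m[\BB]\gtrsim \log m$. Since $\BB$ is in particular conditional (indeed $\sup_m k_m[\BB]=\infty$), the space $\YY$ cannot be superreflexive: a superreflexive space has an equivalent norm with both nontrivial type and cotype, hence by Theorem~\ref{LogEstimateSuperR} any quasi-greedy basis in it would satisfy $k_m[\BB]\lesssim(\log m)^\alpha$ for some $\alpha<1$, contradicting $k_m[\BB]\gtrsim\log m$. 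Now, if $\YY$ is finitely representable in $\XX$ and $\YY$ is non-superreflexive, then $\XX$ is non-superreflexive as well, because superreflexivity passes to spaces finitely representable in a superreflexive space (by definition such spaces are reflexive, and in fact superreflexive, since finite representability is transitive). This closes one implication.

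For the ``only if'' direction, assume $\XX$ is non-superreflexive. By the Milman--Pettis / James characterization of superreflexivity, there is some Banach space $\ZZ$ finitely representable in $\XX$ that fails to be reflexive; equivalently, and more usefully, non-superreflexivity of $\XX$ means that the finite-dimensional summing-basis structure is finitely representable in $\XX$ in a quantitative sense: for every $N$ and every $\delta>0$, the space $\ell_\infty^N$ (or more precisely a space containing $\delta$-isometrically long initial segments of the summing basis of $c_0$) embeds into $\XX$. I will use this to build, via an $\ell_2$-sum/ultraproduct construction, a single separable Banach space $\YY_0$ finitely representable in $\XX$ that contains a semi-normalized basic sequence behaving like the summing basis of $c_0$ on arbitrarily long blocks—this is the non-superreflexive analogue of the spaces constructed in \cite{GW2014} and \cite{AAGHR2015}, where the ambient space was explicitly $c_0$ or $\ell_1$. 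The key point is that the constructions in those papers producing a conditional quasi-greedy basis with $k_m\approx\log m$ only used the presence of long summing-basis segments together with a rich supply of well-separated copies of finite-dimensional Hilbert spaces; the latter is automatic, since $\ell_2^N$ is finitely representable in every infinite-dimensional Banach space.

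The concrete mechanism I would use is the GOW-method of Theorem~\ref{TaylorGW2014}: starting from the semi-normalized ``summing-type'' basic sequence inside a space finitely representable in $\XX$, apply the Garrig\'os--Olevskii--Wojtaszczyk construction to produce $\GOW(\BB)$, a quasi-greedy and democratic basis with $\Vert\sum_{n\in A}\yy_n\Vert\approx|A|^{1/2}$ on the direct sum with $\ell_2$. Since $\ell_2$ is finitely representable in $\XX$, and finite representability is stable under ($\ell_2$- or $c_0$-) direct sums in the relevant sense, the resulting space $\YY$ remains finitely representable in $\XX$; one must check that the GOW-method, which is defined on finite blocks, respects finite representability, which it does because it is built block-by-block from finite-dimensional pieces. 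Then the lower estimate $k_m[\GOW(\BB)]\gtrsim\log m$ is extracted exactly as in \cite{GW2014}: the conditionality on the summing-basis blocks forces $\Vert S_A\Vert$ to grow logarithmically in $|A|$, while Theorem~\ref{LogEstimate} supplies the matching upper bound $k_m[\GOW(\BB)]\lesssim\log m$ since the basis is quasi-greedy. Combining the two gives $k_m[\GOW(\BB)]\approx\log m$, as desired.

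The main obstacle, and the step requiring the most care, is the bookkeeping around finite representability: one must verify that the space $\YY$ produced—namely the direct sum of a space hosting long summing-basis segments with a Hilbertian component, after the GOW surgery—is genuinely finitely representable in $\XX$, not merely in $c_0\oplus\ell_2$. This reduces to two facts: first, that a space is finitely representable in $\XX$ iff each of its finite-dimensional subspaces is, and second, that the finite-dimensional subspaces arising in the GOW construction are (uniformly) finitely representable in $\XX$ because they are assembled from finite segments of a summing basis (finitely representable in $\XX$ by non-superreflexivity) and finite-dimensional Hilbert spaces (finitely representable in every infinite-dimensional space). Once this is pinned down—most cleanly by passing to an ultrapower $\XX^{\mathcal U}$, which is finitely representable in $\XX$ and literally contains an isometric copy of the summing basis of $c_0$ and of $\ell_2$, performing the construction there, and then noting any separable subspace of it is finitely representable in $\XX$—the rest follows by citing Theorem~\ref{TaylorGW2014}, Theorem~\ref{LogEstimate}, and the lower-bound computation of \cite{GW2014}.
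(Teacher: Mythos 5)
Your ``if'' direction is fine and matches the paper's: superreflexivity is a super-property, so if $\XX$ were superreflexive then so would be $\YY$, and Theorem~\ref{LogEstimateSuperR} would force $k_m[\BB]\lesssim(\log m)^\alpha$, a contradiction. The problem lies in the ``only if'' direction, and it occurs right at the start: non-superreflexivity of $\XX$ does \emph{not} give you $(1+\delta)$-copies of $\ell_\infty^N$, nor $\delta$-isometric copies of long initial segments of the summing basis of $c_0$. If it did, $c_0$ would be finitely representable in $\XX$ and $\XX$ would have trivial cotype; but there exist non-reflexive (hence non-superreflexive) spaces of type $2$ and finite cotype --- precisely the Pisier--Xu spaces that this paper uses elsewhere. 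What James's theorem actually provides (Theorem~\ref{CharacterizationReflexivity}, via \cite{Pisier2016}) is a uniformly semi-normalized basic sequence of type P (equivalently, of type P*): one controls $\Vert\sum_{k\le n}\xx_k\Vert$ from above and $|\sum_n a_n|$ from below by $\Vert\sum_n a_n\xx_n\Vert$, but there is no upper bound on the norm of a general linear combination resembling the $\ell_\infty$ norm, so the span of these vectors is in no sense close to $\ell_\infty^n$.

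Because of this, your step ``the conditionality on the summing-basis blocks forces $\Vert S_A\Vert$ to grow logarithmically'' is unjustified. Theorem~\ref{ConditionalityGOW} converts $L_m[\BB]\gtrsim m$ for the \emph{input} basis into $L_m[\GOW(\BB)]\gtrsim\log m$; hence before invoking the GOW-method you must first manufacture, inside a space finitely representable in $\XX$, a basis with $L_m\approx m$. A type P (or type P*) basic sequence by itself need not have this: the canonical basis of $c_0$ is of type P and unconditional. The essential device of the paper, absent from your proposal, is to combine a type P sequence with the type P* sequence of its partial sums (Lemma~\ref{PProperty}(a)) and to replace paired vectors by their sums and differences (Lemma~\ref{Twisting}, the $\diamond$ operation of Theorem~\ref{BasisInDirectSum}, and its block-by-block version in the proof of (a)$\Rightarrow$(b) of Theorem~\ref{MainTheorem}). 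Then $f=\sum\zz_n$ collapses onto the type P half and has bounded norm, $g=\sum(-1)^{n-1}\zz_n$ collapses onto the type P* half and has norm $\gtrsim m$, and since $S_A(f)=\frac{1}{2}(f+g)$ one obtains $L_m\approx m$. Your finite-representability bookkeeping (ultrapower or Dvoretzky plus Lemma~\ref{BasicSequenceLemma} for the $\ell_2$ summand) is sound and agrees with the paper, but without the $L_m\approx m$ input basis the logarithmic lower bound is never obtained.
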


\begin{theorem}\label{CharacterizationSuperR2}
A Banach space $\XX$ is superreflexive if and only if for every quasi-greedy basis $\BB$ of any Banach space $\YY$ finitely representable in $\XX$ there is $0<\alpha<1$ with
\[
k_m[\BB] 
\lesssim (\log m)^{\alpha} \text{ for }m\ge 2.
\]
\end{theorem}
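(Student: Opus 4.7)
The plan is to deduce Theorem~\ref{CharacterizationSuperR2} directly from the two previously established results (Theorem~\ref{LogEstimateSuperR} and Theorem~\ref{CharacterizationSuperR}) combined with the key structural fact that superreflexivity itself is a super-property. More precisely, I would first record the lemma that if $\XX$ is superreflexive and $\YY$ is finitely representable in $\XX$, then $\YY$ is superreflexive. This is immediate from the transitivity of finite representability: any space $\ZZ$ finitely representable in $\YY$ is finitely representable in $\XX$, and hence reflexive by definition of superreflexivity of $\XX$; thus $\YY$ satisfies the defining condition of superreflexivity.

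For the forward implication, assume $\XX$ is superreflexive. Let $\YY$ be any Banach space finitely representable in $\XX$. By the preliminary lemma, $\YY$ is itself superreflexive. Applying Theorem~\ref{LogEstimateSuperR} to $\YY$ then yields, for every quasi-greedy basis $\BB$ of $\YY$, some $0<\alpha<1$ with $k_m[\BB]\lesssim(\log m)^{\alpha}$ for $m\ge 2$, which is exactly the required conclusion.

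For the converse I would argue by contrapositive. Suppose $\XX$ is not superreflexive. Theorem~\ref{CharacterizationSuperR} then supplies a Banach space $\YY$ finitely representable in $\XX$ and a quasi-greedy basis $\BB$ of $\YY$ with $k_m[\BB]\approx\log m$ for $m\ge 2$. In particular $k_m[\BB]\gtrsim\log m$, so for every $0<\alpha<1$ the ratio $k_m[\BB]/(\log m)^{\alpha}$ is unbounded in $m$. Consequently there cannot exist $0<\alpha<1$ with $k_m[\BB]\lesssim(\log m)^{\alpha}$, and the right-hand side of the biconditional fails.

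There is essentially no obstacle here: the heavy lifting has already been done in Theorem~\ref{LogEstimateSuperR} and Theorem~\ref{CharacterizationSuperR}. The only subtlety worth being explicit about is the closure of superreflexivity under finite representability, which is the reason the quantifier ``for every $\YY$ finitely representable in $\XX$'' on the right-hand side of the biconditional does not create any extra difficulty beyond the case $\YY=\XX$ already covered by Theorem~\ref{LogEstimateSuperR}.
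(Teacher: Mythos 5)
Your proposal is correct and follows essentially the same route as the paper, which deduces Theorem~\ref{CharacterizationSuperR2} precisely from Theorem~\ref{LogEstimateSuperR} (forward direction) and Theorem~\ref{CharacterizationSuperR} (converse, by contrapositive). The only difference is that you make explicit the standard fact that superreflexivity passes to spaces finitely representable in $\XX$ via transitivity of finite representability, which the paper leaves implicit.
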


 As a by-product of our work we will obtain a new characterization of superreflexivity that does not involve  greedy-like bases and which is of  interest by itself.
 \begin{theorem}\label{CharacterizationSuperR3}A Banach space $\XX$ is non-superreflexive if and only if there is a basic sequence $\BB$ in $\XX$ with 
 \[
 k_m[\BB]\approx m \text{ for }m\in\NN.
 \]
 \end{theorem}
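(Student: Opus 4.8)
The plan is to exploit the classical characterization of non-superreflexivity via the failure of uniform convexity of *finite-dimensional* pieces, concretely through James's theorem: $\XX$ is non-superreflexive if and only if for every $\varepsilon\in(0,1)$ and every $n\in\NN$ there are vectors $u_1,\dots,u_n$ in the unit ball of $\XX$ such that
\[
\Bigl\Vert \sum_{i\le k} u_i - \sum_{i>k} u_i\Bigr\Vert \ge (1-\varepsilon)\, n
\quad\text{for all } k=0,1,\dots,n.
\]
These ``James sequences'' behave, on the relevant scale, like the summing basis of $c_0$: the partial-sum functionals are uniformly bounded, but the alternating partial-sum operator has norm comparable to $n$. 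The first step is therefore to fix a sequence $\varepsilon_n\downarrow 0$ and extract, for each $n$, a normalized block $(u^{(n)}_i)_{i=1}^{N_n}$ (after normalizing we may assume the vectors are semi-normalized) realizing the James condition with constant $\varepsilon_n$, and then to assemble these finite blocks into one infinite sequence $\BB$ in $\XX$ by concatenation, with the block lengths $N_n$ growing fast enough.

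The second step is to check that the concatenated sequence $\BB=(\xx_j)_{j=1}^\infty$ is in fact a basic sequence. Here the point is that within each James block the partial-sum projections are uniformly bounded (this is forced by the James inequality: if some interval partial sum were large, one could contradict the lower bound for a suitable sign pattern, or more directly one uses that $\Vert\sum_{i=a}^b u_i\Vert \le$ a constant independent of $a,b,n$), so each block is a basic sequence with a uniform basis constant; and the blocks can be separated so that the whole concatenation has a finite basis constant — for this one can either place the blocks in ``nearly independent'' position inside $\XX$ using a gliding-hump/small-perturbation argument, or, cleaner, build $\YY$ as an $\ell_p$- or $c_0$-sum of the finite-dimensional blocks which is finitely representable in $\XX$ and hence embeds isometrically-up-to-$\epsilon$; since the theorem only asserts the existence of a basic sequence \emph{in $\XX$}, I expect the honest route is the perturbation argument, using reflexivity-free block-basis techniques (the Bessaga--Pe\l czy\'nski selection principle is not available since $\XX$ need not even be separable, but one can work inside the closed span of the chosen vectors).

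The third step is the two-sided estimate $k_m[\BB]\approx m$. The upper bound $k_m[\BB]\lesssim m$ is automatic for any basic sequence, as recalled in the excerpt. For the lower bound, given $m$ choose $n$ with $N_n\ge m$ (refine the construction so that $N_n = 2m$, say, for infinitely many $n$) and take $A$ to be the set of indices in block $n$ corresponding to the ``$+$'' half of an alternating pattern that witnesses the James lower bound: then $S_A$ applied to the full alternating sum $x=\sum_{i} \pm u^{(n)}_i$ recovers $\sum_{i\le k} u^{(n)}_i$, whose norm is $\gtrsim n \approx m$ while $\Vert x\Vert \lesssim m$ as well — so one must instead argue that $S_A(x)$ has norm $\gtrsim m$ while $\Vert x\Vert$ is \emph{small}, which is the standard trick: apply $S_A$ not to the long alternating vector but to a vector of the form $x=\sum_{i\le 2\ell}(-1)^i u^{(n)}_i$ where the alternating structure makes $\Vert x\Vert = O(1)$ (consecutive cancellation) yet $S_A x = \sum_{i\ \text{even},\ \le 2\ell} u^{(n)}_i$ has norm $\approx \ell$; taking $\ell\approx m$ gives $\Vert S_A\Vert\gtrsim m$. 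This is exactly how the summing basis of $c_0$ achieves $k_m\approx m$, and the James inequality is precisely what guarantees the required non-cancellation of the even-indexed sub-sum.

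The main obstacle I anticipate is the second step: producing a genuine \emph{basic sequence in $\XX$ itself} (not merely in a space finitely representable in $\XX$) with a uniform basis constant across the concatenated blocks, without any separability or reflexivity assumption on $\XX$. The finitely-representable reformulation (Theorems~\ref{CharacterizationSuperR} and~\ref{CharacterizationSuperR2}) sidesteps this, but Theorem~\ref{CharacterizationSuperR3} explicitly wants the sequence inside $\XX$; I would handle it by a careful perturbation argument, positioning successive James blocks so that each lies within distance $\delta_n$ (summable) of being linearly independent from the span of the previous ones — using that $\XX$ is infinite-dimensional and that the James vectors for block $n$ may be chosen inside an arbitrary finite-codimensional subspace, which is possible because non-superreflexivity localizes to every finite-codimensional subspace (finite representability is inherited). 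The standard basic-sequence perturbation lemma then upgrades the approximate independence to an honest Schauder basic sequence with controlled constant, completing the proof.
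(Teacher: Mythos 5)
Your reduction of the problem to finite blocks placed in successive finite-codimensional subspaces via a Mazur-type perturbation lemma is exactly how the paper glues its construction together inside $\XX$ (its Lemma~\ref{BasicSequenceLemma}, plus the fact that finite-codimensional subspaces inherit non-superreflexivity). But the engine of your argument --- extracting the lower bound $k_m\gtrsim m$ from James ``flat'' vectors $u_1,\dots,u_n$ with $\Vert\sum_{i\le k}u_i-\sum_{i>k}u_i\Vert\ge(1-\varepsilon)n$ --- does not work. That condition is satisfied by the unit vector basis of $\ell_1$, which is unconditional, so no conditionality whatsoever can be deduced from it alone. Concretely, both claims that drive your Step 3 fail: taking $k=n$ in the James inequality forces $\Vert\sum_{i=1}^n u_i\Vert\ge(1-\varepsilon)n$, so the interval sums $\Vert\sum_{i=a}^b u_i\Vert$ are \emph{never} uniformly bounded (these vectors do not behave like the summing basis of $c_0$, whose James ratios are in fact only about $1/2$); and the alternating vector $\sum_{i\le 2\ell}(-1)^iu_i$ need not have norm $O(1)$ --- for the $\ell_1$ basis its norm is $2\ell$.

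What is actually needed, and what the paper supplies, is a \emph{pair} of complementary finite sequences: one of type P (uniformly bounded partial sums, like the unit vectors of $c_0$) and one of type P* (a uniformly bounded functional recovers the sum of the coefficients, so its partial sums grow linearly in norm). These come from the James--Pisier characterization of non-reflexivity (Theorem~\ref{CharacterizationReflexivity}), with constants independent of the space. The paper then forms, inside each block, sums and differences $\zz=\yy\pm\yy'$ of a type P vector $\yy$ with a type P* vector $\yy'$; the vector $f=\sum_n\zz_n=2\sum_n\yy_n$ has norm $O(1)$, the vector $g=\sum_n(-1)^{n-1}\zz_n=2\sum_n\yy'_n$ has norm $\gtrsim m$, and $S_A(f)=\tfrac12(f+g)$ for the set $A$ of odd indices, giving $L_m\gtrsim m$. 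Your alternating-sum idea is morally this twist, but the flatness condition alone cannot produce the two halves. Finally, you never address the converse implication (a basic sequence with $k_m\approx m$ forces non-superreflexivity); after normalizing (which leaves $S_A$ unchanged) this follows from James's $\ell_p$--$\ell_q$ estimates, Theorem~\ref{JamesTheorem}, which give $k_m\lesssim m^{1/p-1/q}$ with $1/p-1/q<1$ in any superreflexive space.
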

 
 Theorem~\ref{CharacterizationSuperR}  and Theorem \ref{CharacterizationSuperR3} will be proved in Section~\ref{CharacterizationSection}. 
Theorem~\ref{CharacterizationSuperR2} will follow from Theorem~\ref{LogEstimateSuperR} and Theorem~\ref{CharacterizationSuperR}. In turn,
 Theorem~\ref{GoodTypeButBadConstants}
can be deduced combining Theorem~\ref{CharacterizationSuperR} with the existence of non-reflexive Banach spaces with ``good'' type and cotype from \cite{PisierXu1987}. However, this approach does not provide an explicit example of a Banach space with non-trivial type and cotype possessing a quasi-greedy basis with ``bad'' conditionality constants. In Section~\ref{PisierXu1987} we will take care of this after developing the  machinery needed in this section and the next one.

Throughout this article we follow standard Banach space terminology and notation as can be found in \cite{AlbiacKalton2016}. We single out the notation that  is more commonly employed.
As it is customary,  $[x_i \colon i\in I]$ denotes the closed linear span of the family $(x_i)_{i\in I}$, and, for $k,n\in\NN$, $\delta_{k,n}=1$ if $n=k$ and $\delta_{k,n}=0$ otherwise. Also, for $x\in\XX^*$ and $x\in\XX$, $x^*\otimes x$ denotes the endomorphism of $\XX$ given by $f\mapsto x^*(f) x$. Given families of positive real numbers $(\alpha_i)_{i\in I}$ and $(\beta_i)_{i\in I}$, the symbol $\alpha_i\lesssim \beta_i$ for $i\in I$ means that $\sup_{i\in I}\alpha_i/\beta_i <\infty$, while $\alpha_i\approx \beta_i$ for $i\in I$ means that $\alpha_i\lesssim \beta_i$ and $\beta_i\lesssim \alpha_i$ for $i\in I$. Applied to Banach spaces, the symbol $\XX\approx \YY$ means that the spaces $\XX$ and $\YY$ are isomorphic. We write $\XX\oplus\YY$ for the Cartesian product of the Banach spaces $\XX$ and $\YY$ endowed with the norm 
\[
\Vert (x,y)\Vert=\max\{ \Vert x\Vert, \Vert y\Vert\}, \quad x\in\XX,\ y\in\YY. 
 \]
 The \textit{support} of a vector $f\in\XX$ with respect to a basis $(\xx_n)_{n=1}^\infty$ with biorthogonal functionals $(\xx_n^*)_{n=1}^\infty$ is the set
 \[
 \supp(f)=\{ n\in\NN \colon \xx_n^*(f)\not=0\}.
 \]
Other more specific notation will be specified in context when needed.

\section{Tailoring bases with large conditionality constants}
\label{PisierXu1987}

\noindent We start this section introducing two types of bases whose definition goes back to Singer \cite{Singer1970}.
\begin{definition} Let $(\xx_n)_{n=1}^\infty$ be a basis in a Banach space $\XX$.
\begin{enumerate}
\item[(i)] $(\xx_n)_{n=1}^\infty$ is said to be of \textit{type P} if there are positive constants $b,C$ such that $ \Vert x_n\Vert\ge b$ and $
\Vert \sum_{k=1}^n \xx_k\Vert\le C$  for all $n$.
\item[(ii)] $(\xx_n)_{n=1}^\infty$ is said to be of \textit{type P*} if the sequence of its  biorthogonal functionals   is basis of type P for its closed linear span in $\XX^{\ast}$.
\end{enumerate}
\end{definition}

With the aim of being as self-contained as possible we gather a few properties of this kind of bases in the next lemma. 

\begin{lemma}[see \cite{Singer1970}*{Chapter 2, \S9}]\label{PProperty}Let $\BB=(\xx_n)_{n=1}^\infty$ be a basis. Then:\begin{itemize}

\item[(a)] $\BB$ is of type P if and only if it is semi-normalized and
 $\BB_0=(\sum_{k=1}^n\xx_k)_{n=1}^\infty$ is a basis, in which case $\BB_0$ is of type P*.
 
 \item[(b)] $\BB$ is of type P* if and only if it is semi-normalized and $\BB_1=(\xx_n-\xx_{n-1})_{n=1}^\infty$ (with the convention $\xx_0=0$)   is a basis,  in which case $\BB_1$ is of type P.

 \item[(c)] $\BB$ is of type P* if and only if $\sup_n \Vert \xx_n\Vert<\infty$ and there is constant $C$ such that   $|\sum_{n=1}^\infty a_n|\le C \Vert \sum_{n=1}^\infty a_n \xx_n\Vert$ 
for any sequence of scalars $(a_n)_{n=1}^\infty$ eventually zero.
\end{itemize}
Moreover, all the constants related to  bases in the conclusions of parts (a), (b), and (c) depend only on the constants related to the bases in the respective hypotheses. 
\end{lemma}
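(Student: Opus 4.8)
The plan is to prove~(a) and~(c) directly and then deduce~(b) from them. What makes this reduction possible is that the difference transform $\BB\mapsto\BB_1$ inverts the summing transform $\BB\mapsto\BB_0$: whenever $\BB_1=(\zz_n)_{n=1}^\infty$ turns out to be a basis, then $\sum_{k=1}^n\zz_k=\xx_n$, so $\BB$ is the summing transform of $\BB_1$ and part~(a) applies to $\BB_1$. Throughout, $K=\sup_m\Vert S_m[\BB]\Vert$ denotes the basis constant of $\BB$, and at each step one keeps explicit track of how every new constant is bounded in terms of the previous ones; the two external facts used are the standard criterion for basic sequences and the fact that the biorthogonal functionals of a basis form a basic sequence with basis constant at most $K$.

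For~(a) the workhorse is a summation-by-parts identity. Put $\yy_n=\sum_{k=1}^n\xx_k$, so that $[\yy_n:n\in\NN]=\XX$ (because $\xx_n=\yy_n-\yy_{n-1}$), $\yy_n\ne0$ (because $\xx_1^\ast(\yy_n)=1$), and the functionals $\xx_n^\ast-\xx_{n+1}^\ast$ are biorthogonal to $(\yy_n)_{n=1}^\infty$. One computes, for every $f\in\XX$ and $m\in\NN$,
\[
\sum_{n=1}^m(\xx_n^\ast-\xx_{n+1}^\ast)(f)\,\yy_n
= S_m[\BB](f)-\xx_{m+1}^\ast(f)\sum_{k=1}^m\xx_k .
\]
If $\BB$ is of type P then it is semi-normalized (since $\Vert\xx_n\Vert=\Vert\sum_{k\le n}\xx_k-\sum_{k\le n-1}\xx_k\Vert\le 2C$ and $\Vert\xx_n\Vert\ge b$), so the right-hand side has norm at most a constant multiple of $\Vert f\Vert$; applying the basic-sequence criterion with $f=\sum_{n=1}^N a_n\yy_n$ (for which the left-hand side equals $\sum_{n=1}^m a_n\yy_n$ whenever $m\le N$) shows that $\BB_0$ is a basis, with basis constant controlled by $b$, $C$ and $K$. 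Its biorthogonal functionals are the $\xx_n^\ast-\xx_{n+1}^\ast$, which are semi-normalized and whose partial sums telescope to $\xx_1^\ast-\xx_{n+1}^\ast$ and hence stay bounded; thus $\BB_0$ is of type P*. Conversely, if $\BB$ is semi-normalized and $\BB_0$ is a basis with basis constant $K_0$, the same identity (whose left-hand side is now $S_m[\BB_0](f)$) gives $\xx_{m+1}^\ast(f)\sum_{k\le m}\xx_k=S_m[\BB](f)-S_m[\BB_0](f)$; evaluating at $f=\xx_{m+1}$ yields $\Vert\sum_{k\le m}\xx_k\Vert\le(K+K_0)\sup_n\Vert\xx_n\Vert$, which together with $\inf_n\Vert\xx_n\Vert>0$ is exactly type P.

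For~(c), the elementary identity $\big(\sum_{k=1}^n\xx_k^\ast\big)\big(\sum_{k=1}^n a_k\xx_k\big)=\sum_{k=1}^n a_k$ shows that the existence of the constant $C$ in the stated inequality is equivalent to $\sup_n\Vert\sum_{k=1}^n\xx_k^\ast\Vert<\infty$ (for the nontrivial direction take $a_k=\xx_k^\ast(f)$, so that $\sum_{k\le n}a_k\xx_k=S_n[\BB](f)$ and the supremum is at most $CK$). Since $(\xx_n^\ast)_{n=1}^\infty$ is automatically a basic sequence and $1/\Vert\xx_n\Vert\le\Vert\xx_n^\ast\Vert\le 2K/\Vert\xx_n\Vert$, the remaining requirement for $(\xx_n^\ast)_{n=1}^\infty$ to be of type P, namely $\inf_n\Vert\xx_n^\ast\Vert>0$, is equivalent to $\sup_n\Vert\xx_n\Vert<\infty$; combining the two equivalences gives~(c) with the asserted quantitative dependence. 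For~(b): if $\BB$ is of type P* then it is semi-normalized and, by~(c), the constant $C$ exists; a second summation by parts gives, for $m<n$ and $g=\sum_{k=1}^n a_k\zz_k$,
\[
\sum_{k=1}^m a_k\zz_k=S_m[\BB](g)+a_{m+1}\,\xx_m,
\qquad
a_{m+1}=\Big(\sum_{j=m+1}^n\xx_j^\ast\Big)(g),
\]
so, since $\big\Vert\sum_{j=m+1}^n\xx_j^\ast\big\Vert$ is uniformly bounded by~(c) and $\zz_n\ne0$ (as $\xx_n^\ast(\zz_n)=1$), the basic-sequence criterion makes $\BB_1$ a basic sequence, a basis because $\xx_n=\sum_{k\le n}\zz_k$; then $\sum_{k\le n}\zz_k=\xx_n$ is bounded and $\Vert\zz_n\Vert\ge1/\Vert\sum_{j=n}^N\xx_j^\ast\Vert$ (any $N\ge n$) is bounded below, so $\BB_1$ is of type P. Conversely, if $\BB$ is semi-normalized and $\BB_1$ is a basis with basis constant $K_1$, then from $\sum_{n=1}^N a_n\xx_n=\sum_{k=1}^N\big(\sum_{n=k}^N a_n\big)\zz_k$ one reads off $\sum_{n=1}^N a_n=\zz_1^\ast\big(\sum_{n=1}^N a_n\xx_n\big)$, so the inequality of~(c) holds with $C=\Vert\zz_1^\ast\Vert\le 2K_1/\Vert\xx_1\Vert$, whence $\BB$ is of type P* by~(c); and then $\BB_1$ is of type P exactly as before.

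The only genuine points of care are bookkeeping and logical order: the two summation-by-parts computations must be arranged so as to display $K$, $K_0$, $K_1$, $C$, $\sup_n\Vert\xx_n\Vert$ and $\inf_n\Vert\xx_n\Vert$ explicitly (this is what makes the ``moreover'' clause about constants hold), and one must really verify through the basic-sequence criterion that $\BB_0$ and $\BB_1$ are bases before asserting they are of type P or type P*. No single step is deep.
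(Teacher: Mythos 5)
Your proof is correct: the summation-by-parts identities, the Gr\"unbaum criterion for basic sequences, the automatic basicity of the biorthogonal functionals, and the bound $1/\Vert\xx_n\Vert\le\Vert\xx_n^\ast\Vert\le 2K/\Vert\xx_n\Vert$ are all used accurately, and the constants are tracked as the ``moreover'' clause requires. The paper gives no proof of this lemma (it cites Singer, Chapter 2, \S 9), and your argument is essentially the classical one found there, so there is nothing further to compare.
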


Although it may have gone unnoticed, as a matter of fact reflexivity can be characterized in terms of bases of type P and type P*. To be precise:
\begin{theorem}[cf.\ \cite{Pisier2016}]\label{CharacterizationReflexivity} If $\XX$ is a Banach space, the following are equivalent:
\begin{itemize}
\item[(a)] $\XX$ is  not reflexive.
\item[(b)] $\XX$ contains a basic sequence of type P.
\item[(c)] $\XX$ contains a basic sequence of type P*.
\end{itemize}
Moreover, all the constants related to the basic sequences in (b) and (c) (namely, the basis constants, the type P and type P* constants, and the semi-normalization constants) are universal and do not depend on the  space $\XX$.
\end{theorem}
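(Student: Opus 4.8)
The plan is to derive the three equivalences from James's characterization of reflexivity, from Lemma~\ref{PProperty}, and from the fact that bounded sequences in a reflexive space are relatively weakly compact, keeping track of all constants so as to see that they depend only on absolute numerical data coming from James's theorem.

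First I would dispatch the ``easy'' implications (b)$\Rightarrow$(a) and (c)$\Rightarrow$(a). Suppose $\XX$ is reflexive and, for a contradiction, that $\BB=(\xx_n)_{n=1}^\infty$ is a basic sequence of type P in $\XX$, say with $\Vert\xx_n\Vert\ge b>0$ and $\Vert\sum_{k=1}^n\xx_k\Vert\le C$ for all $n$. Put $\YY=[\xx_n\colon n\in\NN]$ and let $(\xx_n^*)_{n=1}^\infty\subseteq\YY^*$ be the biorthogonal functionals. Since $\YY$ is reflexive, the bounded sequence $S_n:=\sum_{k=1}^n\xx_k$ has a subsequence converging weakly to some $x\in\YY$; because $\xx_m^*(S_n)=1$ whenever $n\ge m$, weak convergence forces $\xx_m^*(x)=1$ for every $m$, so the expansion of $x$ in $\BB$ is the convergent series $x=\sum_m\xx_m$, which gives $\xx_m\to 0$, contradicting $\Vert\xx_m\Vert\ge b$. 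For (c)$\Rightarrow$(a), if $\BB=(\xx_n)_{n=1}^\infty$ is a basic sequence of type P* in $\XX$, then Lemma~\ref{PProperty}(b) gives that $(\xx_n-\xx_{n-1})_{n=1}^\infty$ (with $\xx_0=0$) is a basis of $[\xx_n\colon n\in\NN]$ of type P; as $[\xx_n\colon n\in\NN]$ is a reflexive subspace of $\XX$, this contradicts what was just proved.

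For the substantial implications (a)$\Rightarrow$(c) and (a)$\Rightarrow$(b), I would use James's theorem, refined by a Mazur-type selection built into its recursive construction, in the following quantitative form: if $\XX$ is not reflexive then there are a basic sequence $\BB=(\uu_n)_{n=1}^\infty$ in $\XX$ with $\Vert\uu_n\Vert\le 1$ and basis constant at most $2$, and functionals $(f_n)_{n=1}^\infty$ in the unit ball of $\XX^*$, such that $f_n(\uu_m)=\tfrac12$ for $n\le m$ and $f_n(\uu_m)=0$ for $n>m$. Granting this, for every eventually zero scalar sequence $(a_n)$ one has
\[
\Bigl|\sum_n a_n\Bigr|=2\,\Bigl|f_1\Bigl(\sum_n a_n\uu_n\Bigr)\Bigr|\le 2\,\Bigl\Vert\sum_n a_n\uu_n\Bigr\Vert ,
\]
and $\sup_n\Vert\uu_n\Vert\le 1$, so Lemma~\ref{PProperty}(c) shows that $\BB$ is of type P* with universal constants; this is (a)$\Rightarrow$(c). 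Applying Lemma~\ref{PProperty}(b) to $\BB$, the sequence $(\uu_n-\uu_{n-1})_{n=1}^\infty$ is a basis of $[\uu_n\colon n\in\NN]\subseteq\XX$ of type P, again with universal constants, which is (a)$\Rightarrow$(b). Combining the four implications yields Theorem~\ref{CharacterizationReflexivity}, the uniformity of the constants being exactly what the quantitative versions of James's theorem and of Lemma~\ref{PProperty} provide.

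The point I expect to require genuine care is the quantitative form of James's theorem used above, namely that the James sequence $(\uu_n)$ can be taken to be a basic sequence whose basis constant is bounded by an absolute constant: the bare construction yields $(\uu_n)$ and $(f_n)$ with the stated biorthogonality relations and norm bounds, but securing basicness forces one to impose, at each stage of the recursion, the extra requirement that $\uu_{n+1}$ almost annihilate a finite norming family for the unit sphere of $[\uu_1,\dots,\uu_n]$, and one must verify that this remains compatible with Helly's condition so that such a $\uu_{n+1}$ still exists in the ball. A less delicate alternative is to observe that the James sequence $(\uu_n)$ has no weakly convergent subsequence --- if $\uu_{n_k}\to u$ weakly then $f_j(u)=\tfrac12$ for all $j$, yet by Mazur's lemma some finite convex combination of the $\uu_{n_k}$ lies within $\tfrac12$ of $u$ while being annihilated by every $f_j$ with $j$ past the indices that occur, a contradiction --- and then to feed it to Rosenthal's $\ell_1$-theorem together with Rosenthal's $c_0$-theorem: one obtains either a basic (indeed $\ell_1$- or strongly summing) subsequence of $(\uu_n)$, which is of type P* by the computation above, or a subsequence $(\uu_{n_k})$ for which $(\uu_{n_k}-\uu_{n_{k+1}})_k$ is equivalent to the unit vector basis of $c_0$; this last sequence has uniformly bounded partial sums by telescoping and is semi-normalized because $f_{n_{k+1}}(\uu_{n_k}-\uu_{n_{k+1}})=-\tfrac12$, so it is a basic sequence of type P, and Lemma~\ref{PProperty}(a)--(b) passes back and forth between type P and type P*.
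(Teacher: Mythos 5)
Your overall architecture coincides with the paper's. The implication (b)$\Rightarrow$(a) is proved by exactly the same weak-compactness argument applied to the partial sums, (c)$\Rightarrow$(a) reduces to it via Lemma~\ref{PProperty}(b), and the substance of (a)$\Rightarrow$(c) is a James-type sequence $(\uu_n)$, $(f_n)$ with the triangular duality pattern which is \emph{moreover basic with a universal basis constant}. The paper obtains this last fact by citing \cite{Pisier2016}*{Theorem 3.10 and Remark 3.21} (which also yields that the difference sequence is basic, so the paper concludes type P* via Lemma~\ref{PProperty}(b); your use of part (c) with the single functional $f_1$ is a slightly cleaner way to finish and needs only $(\uu_n)$ itself to be basic). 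You correctly isolate the basicness of the James sequence as the one delicate point, and your first route --- interleaving a Mazur-type selection with Helly's condition at each step of the recursion --- is precisely the content of the cited result; since you do not carry out the verification that the Mazur constraints remain compatible with Helly's condition, that step stays at the level of a citation/sketch in your write-up, just as it does in the paper.

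Your fallback via Rosenthal's $\ell_1$- and $c_0$-theorems, however, cannot replace that step, because it does not deliver the ``Moreover'' clause: the equivalence constant in Rosenthal's $\ell_1$-theorem, the basis constant of a strongly summing subsequence, and the $c_0$-equivalence constant in the second alternative all depend on the particular space and sequence, so the basic sequences produced that way need not have universally bounded basis or semi-normalization constants. The universality of these constants is exactly what the paper later exploits (in the proofs of Theorem~\ref{MainTheorem2} and Corollary~\ref{MainTheorem3}), so this alternative proves only the qualitative equivalence of (a), (b), (c). A smaller inaccuracy in the same passage: Rosenthal's $c_0$-dichotomy yields a \emph{convex block basis} equivalent to the summing basis of $c_0$, not a subsequence whose consecutive differences are equivalent to the unit vector basis; the argument survives this correction (a convex block basis of $(\uu_n)$ still satisfies $f_1(\cdot)=\tfrac12$, and the summing basis of $c_0$ is itself of type P*), but again without universal constants.
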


\begin{proof} (a) $\Rightarrow$ (c) Assume that $\XX$ is not reflexive and pick $0<\theta<1$. By \cite{Pisier2016}*{Theorem 3.10 and Remark 3.21} there are sequences $\BB=(\xx_n)_{n=1}^\infty$ in $\XX$
and $(\xx_n^*)_{n=1}^\infty$ in $\XX^*$ so that both $\BB$ and $(\xx_n-\xx_{n-1})_{n=1}^\infty$ are basic sequences (with basis constants as close to 4 as we wish), 
$\sup_{n}\max\{\Vert \xx_n\Vert, \Vert \xx_n^*\Vert\}\le 1$, and $\xx_n^*(\xx_n)=\theta$. In particular,  $\theta\le \inf_n \Vert \xx_n \Vert $, hence
$\BB$ is semi-normalized. By Lemma~\ref{PProperty}(b), $\BB$ is a basic sequence of type P*.

(c) $\Rightarrow$ (b) is straightforward by Lemma~\ref{PProperty}.

(b) $\Rightarrow$ (a) Assume that $\BB=(\xx_{n})_{n=1}^{\infty}$ is a basis of type P with basis constant $K$ of a subspace of $\XX$. Let $\yy_n=\sum_{j=1}^n \xx_j$.  If a subsequence of $(\yy_n)_{n=1}^\infty$ were weakly convergent to some $z\in\XX$ then we would have $x_n^*(z)=1$ for $n=1,2,\dots$, which is impossible. Since  $(\yy_n)_{n=1}^\infty$ is bounded,  this shows that $\XX$ is non-reflexive because its unit ball is not weakly compact.
\end{proof}

The following alteration of a basis will be also be very useful to us.

\begin{lemma}
\label{Twisting} 
Let $(\xx_n)_{n=1}^\infty$ be a semi-normalized basis for a Banach space $\XX$. Then the sequence $\BB=(\yy_n)_{n=1}^\infty$ given by
\[
\begin{cases}\yy_{2n-1}=\xx_{2n-1}+\xx_{2n},\\ \yy_{2n}=\xx_{2n-1}-\xx_{2n},
\end{cases}
\]
is a semi-normalized basis for $\XX$.
\end{lemma}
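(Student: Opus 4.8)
The plan is to verify the three defining properties of a Schauder basis for $\BB=(\yy_n)_{n=1}^\infty$ directly, exploiting the fact that the map $T$ sending $\xx_{2n-1}\mapsto\yy_{2n-1}=\xx_{2n-1}+\xx_{2n}$ and $\xx_{2n}\mapsto\yy_{2n}=\xx_{2n-1}-\xx_{2n}$ is, on each two-dimensional block $[\xx_{2n-1},\xx_{2n}]$, (twice) a reflection and hence invertible; in fact $T$ acts blockwise as $\begin{pmatrix}1&1\\1&-1\end{pmatrix}$, which is its own inverse up to the factor $2$. So first I would observe that $T\colon\XX\to\XX$ defined on the dense span $[\xx_n\colon n\in\NN]$ by these formulas extends to a bounded linear bijection of $\XX$: boundedness follows because for a finitely supported $f=\sum a_n\xx_n$ one has $Tf=\sum_n (a_{2n-1}+a_{2n})\xx_{2n-1}+(a_{2n-1}-a_{2n})\xx_{2n}$, and each new coefficient is controlled by the corresponding pair of old ones, so $\Vert Tf\Vert\le C\Vert f\Vert$ using the unconditionality-free estimate coming from the basis constant $K$ of $(\xx_n)$ together with semi-normalization (concretely, pass through the coordinate functionals: $|a_{2n-1}+a_{2n}|,|a_{2n-1}-a_{2n}|\le 2\max\{|a_{2n-1}|,|a_{2n}|\}\lesssim\Vert f\Vert$ blockwise, then reassemble using that $(\xx_n)$ is a basis). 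The inverse is given by the analogous formula $\xx_{2n-1}=\tfrac12(\yy_{2n-1}+\yy_{2n})$, $\xx_{2n}=\tfrac12(\yy_{2n-1}-\yy_{2n})$, which is bounded by the same argument.

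Given that $T$ is a bounded automorphism of $\XX$, property (i) is immediate: $[\yy_n\colon n\in\NN]=T([\xx_n\colon n\in\NN])=T(\XX)=\XX$. For property (ii), the biorthogonal functionals are $\yy_{2n-1}^*=\tfrac12(\xx_{2n-1}^*+\xx_{2n}^*)$ and $\yy_{2n}^*=\tfrac12(\xx_{2n-1}^*-\xx_{2n}^*)$; one checks $\yy_j^*(\yy_k)=\delta_{j,k}$ directly on each block, and uniqueness is automatic once (iii) holds. For property (iii), I would compute the partial sum projections $S_m[\BB]$: for an even index $m=2N$ one has $S_{2N}[\BB]=S_{2N}[(\xx_n)]$ because summing over complete blocks recovers $\sum_{n=1}^{2N}\xx_n^*\otimes\xx_n$ (the cross terms cancel), so $\Vert S_{2N}[\BB]\Vert\le K$; for odd $m=2N-1$, $S_{2N-1}[\BB]=S_{2N-2}[\BB]+\yy_{2N-1}^*\otimes\yy_{2N-1}$, and the extra rank-one term is bounded in norm by $\Vert\yy_{2N-1}^*\Vert\,\Vert\yy_{2N-1}\Vert\le (2b)\cdot\tfrac12(2b^*)$ using semi-normalization of $(\xx_n)$ and Lemma~\ref{SemiNormalizedCharacterization}. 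Hence $\sup_m\Vert S_m[\BB]\Vert<\infty$, so $\BB$ is a basis. Finally, $\BB$ is semi-normalized: $\Vert\yy_n\Vert\le 2b$ trivially, and $\Vert\yy_n\Vert$ is bounded below because $\Vert\yy_j^*\Vert\le b^*$ for the corresponding functional forces, via Lemma~\ref{SemiNormalizedCharacterization} applied to $\BB$, $\inf_n\Vert\yy_n\Vert>0$ (equivalently, $\yy_{2n-1}=T\xx_{2n-1}$ and $\Vert\xx_{2n-1}\Vert\ge a$ with $T^{-1}$ bounded gives $\Vert\yy_{2n-1}\Vert\ge a/\Vert T^{-1}\Vert$).

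The only mildly delicate point — the \emph{main obstacle} — is the boundedness of $T$ (and $T^{-1}$) on all of $\XX$, since $\XX$ need not have an unconditional basis and one cannot split the sum over blocks freely. The fix is to route everything through the partial sum projections: the key identity $S_{2N}[\BB]=S_{2N}[(\xx_n)]$ shown above says exactly that $T$ maps the finite-dimensional spaces $[\xx_1,\dots,\xx_{2N}]$ onto themselves, and on each of these the norm of $T$ restricted there is controlled by $K$ and the semi-normalization constants uniformly in $N$ (one estimates $\Vert Tf\Vert\le\Vert S_{2N}[(\xx_n)]\Vert$ times a blockwise factor); letting $N\to\infty$ and using density yields a bounded $T$ on $\XX$. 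Once $T\in\mathcal B(\XX)$ is in hand together with its inverse, all remaining verifications are the short computations indicated above.
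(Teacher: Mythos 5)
Your verification of the Schauder basis property via the partial sums --- the identity $S_{2N}[\BB]=S_{2N}[(\xx_n)]$ for even indices and the rank-one (in the paper, rank-at-most-four) correction for odd indices, followed by Lemma~\ref{SemiNormalizedCharacterization} for semi-normalization --- is correct and is exactly the paper's proof. The problem is the scaffolding you build around it: the assertion that the blockwise map $T$ extends to a bounded automorphism of $\XX$ is \emph{false} in general, and the argument you give for it in the last paragraph does not work. Concretely, take $(\xx_n)=(\sss_n)$, the summing basis of $c_0$ (a semi-normalized conditional basis). With $f=\sum_{n=1}^{2N}a_n\sss_n$, $a_{2n-1}=1$, $a_{2n}=-1$, one computes $\Vert f\Vert_\infty=1$ while $Tf=2\sum_{n=1}^{N}\sss_{2n}$ has norm $2N$. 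The flaw in your "reassembling'' step is that for a conditional basis a bound $|b_n|\lesssim\Vert f\Vert$ on the individual coefficients of $Tf=\sum_n b_n\xx_n$ gives no control on $\Vert\sum_n b_n\xx_n\Vert$ independent of the length of the sum; and the identity $S_{2N}[\BB]=S_{2N}[(\xx_n)]$ only says that both operators act as the identity on $[\xx_1,\dots,\xx_{2N}]$ --- it says nothing about the norm of $T$ on that subspace, which as the example shows can grow linearly in $N$. (Indeed, if $T$ were always an isomorphism, $\BB$ would always be equivalent to $(\xx_n)$, which would trivialize the Remark following Theorem~\ref{BasisInDirectSum}.)

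Fortunately the lemma does not need $T$ at all, so the error is removable rather than fatal: (i) follows because the linear span of $\{\yy_1,\dots,\yy_{2N}\}$ coincides with that of $\{\xx_1,\dots,\xx_{2N}\}$ for every $N$, hence the closed spans agree; (ii) is your displayed formula for $\yy_n^*$; (iii) is your partial-sum computation, which never invokes $T$; and for semi-normalization the first of your two alternatives (bound $\Vert\yy_n\Vert$ and $\Vert\yy_n^*\Vert$ above and invoke Lemma~\ref{SemiNormalizedCharacterization}) is exactly what the paper does, while the second alternative, via $\Vert T^{-1}\Vert$, must be discarded along with $T$. So: delete the first and last paragraphs, and what remains is a complete and correct proof.
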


\begin{proof}
By Lemma~\ref{SemiNormalizedCharacterization},
$C:=\sup_n \{ \Vert \xx_n\Vert , \Vert \xx_n^*\Vert \}<\infty$. It is clear that $[\yy_n \colon n\in\NN]=\XX$.
Define $(\yy_n^*)_{n=1}^\infty$ by 
\[
\begin{cases}\displaystyle
\yy_{2n-1}^*=\frac{\xx^*_{2n-1}+\xx^*_{2n}}{2},\\ \displaystyle \yy_{2n}^*=\frac{\xx^*_{2n-1}-\xx^*_{2n}}{2},\end{cases}
\]
so that $\yy_n^*(\yy_k)=\delta_{k,n}$ for all $k$ and $n\in\NN$. For $m\in\NN$
we have
\[
\begin{cases}
S_{2m-1}[\BB] 
=S_{2m-2}
+\frac{1}{2} \sum_{\varepsilon,\delta\in\{0,1\}} \xx_{2m-\varepsilon}^*\oplus \xx_{2m-\delta},\\
S_{2m}[\BB]=S_{2m}.
\end{cases}
\]
We infer that 
\[
\textstyle
\sup_m \Vert S_m[\BB]\Vert \le K+2C^2.\] Hence $\BB$ is a basis.
Moreover 
$ \Vert \yy_k \Vert \le 2 C$ and $ \Vert \yy_k^* \Vert \le C$
for every $k\in\NN$, and so $\BB$ is semi-normalized.
\end{proof}

Let us introduce an auxiliary measure of the conditionality of a basis $\BB$. For any $m\in\NN$ put
\begin{equation}\label{NewConditionalConstants}
L_m[\BB]=\sup\left\{ \frac{ \Vert S_A(f)\Vert}{ \Vert f \Vert } \colon \max(\supp(f))\le m, \, A\subseteq\NN\right\}.
\end{equation}

Obviously, $L_m[\BB]\le k_m[\BB]$. Notice that the basis $\BB$ is unconditional if and only if $\sup_m L_m[\BB]<\infty$.

Before giving our first result on bases, it will be convenient to fix some more notation. 

\begin{definition} Given sequences $\BB_1=(\xx_n)_{n=1}^\infty$ and $\BB_2=(\yy_n)_{n=1}^\infty$ in Banach spaces $\XX$ and $\YY$, respectively, let us define a sequence $\BB_1 \diamond \BB_2=(\zz_n)_{n=1}^\infty$ in $\XX\oplus\YY$ by 
\begin{equation*}
\begin{cases}
\zz_{2n-1}=(\xx_n,\yy_n),\\
 \zz_{2n}=(\xx_n, -\yy_n).\end{cases}
\end{equation*}
\end{definition}
\begin{theorem}\label{BasisInDirectSum}
Suppose $\BB_1$, $\BB_2$ are  semi-normalized bases in  $\XX$, $\YY$, respectively. Then the sequence $\BB_1\diamond \BB_2$ is a semi-normalized basis for $\XX\oplus\YY$. Moreover,  if $\BB_1$ is of type P and $\BB_2$ is of type P*, then 
\[
L_m[\BB_1\diamond\BB_2]\approx m \text{ for }m\in\NN.
\]
\end{theorem}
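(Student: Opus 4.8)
The plan is to treat the two claims separately: first that $\BB_1\diamond\BB_2$ is a semi-normalized basis of $\XX\oplus\YY$, then the sharp two-sided bound for $L_m$.

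For the basis property I would produce the biorthogonal functionals explicitly. Writing $(\xx_n^*)_{n=1}^\infty$ and $(\yy_n^*)_{n=1}^\infty$ for the biorthogonal functionals of $\BB_1$ and $\BB_2$, set
\[
\zz_{2n-1}^*(x,y)=\tfrac12\big(\xx_n^*(x)+\yy_n^*(y)\big),\qquad \zz_{2n}^*(x,y)=\tfrac12\big(\xx_n^*(x)-\yy_n^*(y)\big);
\]
a direct computation gives $\zz_j^*(\zz_k)=\delta_{j,k}$. The identities $\zz_{2n-1}+\zz_{2n}=(2\xx_n,0)$ and $\zz_{2n-1}-\zz_{2n}=(0,2\yy_n)$ show at once that $[\zz_n\colon n\in\NN]=\XX\oplus\YY$. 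By Lemma~\ref{SemiNormalizedCharacterization} we have $\sup_n\max\{\|\xx_n\|,\|\xx_n^*\|,\|\yy_n\|,\|\yy_n^*\|\}<\infty$, hence $\sup_n\max\{\|\zz_n\|,\|\zz_n^*\|\}<\infty$, so it only remains to bound the partial sum projections, after which the same lemma yields semi-normalization. The key computation here is
\[
S_{2m}[\BB_1\diamond\BB_2]=S_m[\BB_1]\oplus S_m[\BB_2],\qquad S_{2m-1}[\BB_1\diamond\BB_2]=S_{2m-2}[\BB_1\diamond\BB_2]+\zz_{2m-1}^*\otimes\zz_{2m-1},
\]
from which $\sup_m\|S_m[\BB_1\diamond\BB_2]\|\le\max\{K_1,K_2\}+\sup_n\|\zz_n^*\|\,\|\zz_n\|<\infty$, where $K_i$ is the basis constant of $\BB_i$.

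For the estimate $L_m[\BB_1\diamond\BB_2]\approx m$ the upper bound is free: $L_m[\BB_1\diamond\BB_2]\le k_m[\BB_1\diamond\BB_2]\lesssim m$ by the elementary bound $k_m\lesssim m$ valid for every basis, and $L_m$ is non-decreasing in $m$. For the lower bound I would exhibit a single test vector. Fix $m$, let $C_1$ be a constant with $\|\sum_{k=1}^m\xx_k\|\le C_1$ for all $m$ (type P of $\BB_1$), and let $C_2$ be the constant of Lemma~\ref{PProperty}(c) for the type P* basis $\BB_2$, so that $m=|\sum_{n=1}^m 1|\le C_2\|\sum_{n=1}^m\yy_n\|$. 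Put
\[
f_m=\sum_{n=1}^m(\zz_{2n-1}+\zz_{2n})=\Big(2\sum_{n=1}^m\xx_n,\,0\Big)\in\XX\oplus\YY .
\]
Then $\max(\supp f_m)=2m$ and $\|f_m\|=2\big\|\sum_{n=1}^m\xx_n\big\|\le 2C_1$, while the coordinate projection onto the odd indices $A=\{1,3,\dots,2m-1\}$ gives $S_A(f_m)=\sum_{n=1}^m\zz_{2n-1}=\big(\sum_{n=1}^m\xx_n,\ \sum_{n=1}^m\yy_n\big)$, so that $\|S_A(f_m)\|\ge\big\|\sum_{n=1}^m\yy_n\big\|\ge m/C_2$. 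Hence $L_{2m}[\BB_1\diamond\BB_2]\ge\|S_A(f_m)\|/\|f_m\|\ge m/(2C_1C_2)$, and monotonicity in $m$ upgrades this to $L_m[\BB_1\diamond\BB_2]\gtrsim m$, which together with the upper bound gives $L_m[\BB_1\diamond\BB_2]\approx m$.

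None of the computations is difficult; the one genuinely clever point is the choice of $f_m$: adding $\zz_{2n-1}$ and $\zz_{2n}$ cancels the $\YY$-coordinate and leaves an $\XX$-coordinate that the type P property of $\BB_1$ keeps bounded, whereas the coordinate projection onto the odd half of the index set resurrects the partial sum $\sum_{n\le m}\yy_n$, whose norm grows linearly in $m$ because $\BB_2$ is of type P* (Lemma~\ref{PProperty}(c)). The remaining steps — verifying biorthogonality, the span, and the partial-sum formulas — are routine, so I do not expect any serious obstacle.
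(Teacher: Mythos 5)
Your proposal is correct and follows essentially the same route as the paper: the paper establishes the basis property by citing Singer's direct-sum result together with Lemma~\ref{Twisting} (whose proof is exactly your partial-sum computation), and its lower bound uses the very same test vector $f=\sum_{n=1}^{2j}\zz_n$ with the projection onto the odd indices, estimating $\Vert S_A(f)\Vert$ via $\tfrac12(\Vert g\Vert-\Vert f\Vert)$ with $g=\sum(-1)^{n-1}\zz_n$ rather than by reading off the $\YY$-coordinate under the max-norm as you do. The differences are purely cosmetic.
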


\begin{proof}Let $\BB_1=(\xx_n)_{n=1}^\infty$ and $\BB_2=(\yy_n)_{n=1}^\infty$. By \cite{Singer1970}*{Proposition 4.2} the sequence $(\uu_n)_{n=1}^\infty$ given by $\uu_{2n-1}=(\xx_n,0)$ and $ \uu_{2n}=(0,\yy_n)$, $n\in\NN$, is a semi-normalized basis for $\XX\oplus\YY$. Applying Lemma~\ref{Twisting} to $(\uu_n)_{n=1}^\infty$ we get that $\BB_1\diamond \BB_2=(\zz_n)_{n=1}^\infty$ is  semi-normalized. 

Assume now that  $\BB_1$ is of type P and that $\BB_2$ is of type P*. Put $C=\sup_n \Vert \sum_{k=1}^n \xx_k\Vert$. By Lemma~\ref{PProperty}(c) there is constant $D>0$ such that $\Vert \sum_{n=1}^m \yy_n\Vert\ge D m$ for all $m\in\NN$.
For $m\in\NN$ with $m\ge 2$ we pick $j\in\NN$ so that $2j\le m \le 2j+1$ and consider
\begin{align*}
f&:=\textstyle\sum_{n=1}^{2j} \zz_n=\left(2\sum_{n=1}^j \xx_n,0\right)=2(\yy_n,0), \text{ and }\\
g&:=\textstyle\sum_{n=1}^{2j} (-1)^{n-1} \zz_n=\left(0,2\sum_{n=1}^j \yy_n\right).\\
\end{align*}
Notice that $f+g=2 S_A(f)$, where $A=\{2 n-1\colon 1\le n\le j\}$. Hence
\[
L_{m}[\BB_1\diamond\BB_2]\ge \frac{\Vert S_A(f)\Vert }{\Vert f\Vert} \ge \frac{\Vert g \Vert -\Vert f\Vert } {2 \Vert f\Vert}\ge \frac{D j -C}{2C}\ge \frac{D(m-1)-2C}{4C},
\]
and so $L_m[\BB_1\diamond\BB_2]\approx m$ for $m\in\NN$. 
\end{proof}

\begin{remark} Note the that naive alteration of a basis described in Lemma~\ref{Twisting} can produce a not so naive alteration in its  conditional constants. Indeed, let $\BB_1$ be the canonical basis of $c_0$ (which is, up to equivalence, the unique unconditional basis with property P) and let $\BB_2$ be the canonical basis of $\ell_1$ 
 (which is, up to equivalence, the unique unconditional basis with property P*). Then, while the direct sum of $\BB_1$ and $\BB_{2}$ is still unconditional, the basis $\BB_1\diamond\BB_2$ is as conditional as it could be!

\end{remark}

Garrig\'os and Wojtaszczyk  \cite{GW2014} measured  the conditionality of the bases that are manufacture by the GOW-method.
The technique they developed allows to express the conditionality of a basis  $\GOW(\BB)$ in terms of the numbers $L_m[\BB]$. Recall that a function $\phi\colon[0,\infty)\to[0,\infty)$  is said to be \textit{doubling} if for some non-negative constant $C$ one has
$\phi(2t)\le C\phi(t)$ for all $t\ge 0$.
\begin{theorem}[\cite{GW2014}*{Lemma  3.6}]\label{ConditionalityGOW} Let  $\phi\colon[0,\infty)\to[0,\infty)$ be an increasing  doubling function. Suppose  that $\BB$ is a basis with
 $L_m[\BB] \ge \phi(m) $ for $m\in\NN$. Then $L_m[\GOW(\BB)] \ge C \phi(\log m)$ for all $m\in\NN$, where $C$ is a constant that depends only  on the function
$\phi$. 
\end{theorem}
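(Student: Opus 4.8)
The plan is a transfer-and-rescale argument. First I would unwind the Olevskii-type construction of $\GOW(\BB)=(\yy_n)_{n=1}^{\infty}$ from \cite{GW2014} far enough to record its key feature: $\BB$ is processed in consecutive finite blocks whose lengths grow geometrically, at some fixed rate $\gamma>1$, while the portion of $\BB$ consumed by a block grows only linearly. Consequently, the conditionality carried by the initial segment $(\xx_i)_{i=1}^{N}$ of $\BB$ becomes visible within the first $\lesssim\gamma^{N}$ vectors of $\GOW(\BB)$. What I want to extract from the construction is a \emph{transfer estimate}: there is an absolute $c>0$ such that for every $N\in\NN$ one can produce $g\in\XX\oplus\ell_2$ with $\max(\supp_{\GOW(\BB)}(g))\lesssim\gamma^{N}$ and a set $A\subseteq\NN$ with $\Vert S_A^{\GOW(\BB)}(g)\Vert\ge c\,L_N[\BB]\,\Vert g\Vert$; equivalently, $L_m[\GOW(\BB)]\ge c\,L_N[\BB]$ whenever $m\gtrsim\gamma^{N}$.

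Granting the transfer estimate, the rest is bookkeeping. Fix a large $m\in\NN$ and let $N$ be the largest integer with $m\gtrsim\gamma^{N}$ (the implied constant being the absolute one coming from the transfer estimate), so that $N\ge\frac{\log m}{\log\gamma}-C_1$ with $C_1$ absolute. Since $k\mapsto L_k[\GOW(\BB)]$ is nondecreasing, the transfer estimate, the hypothesis $L_N[\BB]\ge\phi(N)$, and monotonicity of $\phi$ give
\[
L_m[\GOW(\BB)]\ \ge\ c\,L_N[\BB]\ \ge\ c\,\phi(N)\ \ge\ c\,\phi\!\left(\tfrac{\log m}{\log\gamma}-C_1\right).
\]
Now the doubling hypothesis on $\phi$ finishes the job: iterating $\phi(2t)\le C_0\phi(t)$ yields $\phi(\lambda t)\le C_0^{\lceil\log_2\lambda\rceil}\phi(t)$ for all $\lambda\ge1$, and a choice of $\lambda=\lambda(\gamma,C_1)$ with $\lambda\bigl(\frac{\log m}{\log\gamma}-C_1\bigr)\ge\log m$ for every $m\ge m_0$ converts the last display into $L_m[\GOW(\BB)]\gtrsim\phi(\log m)$ for $m\ge m_0$, with a constant depending only on $C_0$ and on the absolute constants $\gamma,C_1$. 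For the finitely many $m<m_0$ one simply notes $L_m[\GOW(\BB)]\ge L_1[\GOW(\BB)]=1$ while $\phi(\log m)\le\phi(\log m_0)$, so those $m$ cost only a further adjustment of the constant (the case $\phi\equiv0$ being trivial). Collecting constants, $L_m[\GOW(\BB)]\ge C\phi(\log m)$ for all $m\in\NN$ with $C$ depending only on $\phi$, which is the assertion.

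The main obstacle is the transfer estimate, and inside it the norm control $\Vert S_A^{\GOW(\BB)}(g)\Vert\ge c\,L_N[\BB]\,\Vert g\Vert$ as opposed to the weaker $\ge c\,L_N[\BB]\,\Vert f\Vert$, where $f$ is the $\XX$-coordinate of $g$: one has to be sure that the orthonormal $\ell_2$-components glued by the construction onto a near-extremal $f$ for $L_N[\BB]$ do not inflate $\Vert g\Vert$. This is exactly where the design of $\GOW(\BB)$ is used — the $\ell_2$-side of each block is a rescaled Hadamard (hence $1$-unconditional, $\ell_2$-democratic) system, so summing those components over $A$ produces, by orthogonality within and across blocks, an $\ell_2$-norm comparable to the square root of the number of active coordinates, which the geometric growth of the block lengths keeps $\lesssim\Vert f\Vert$; here the density estimate $\Vert\sum_{n\in A}\yy_n\Vert\approx|A|^{1/2}$ of Theorem~\ref{TaylorGW2014} is the natural quantitative input. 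The remaining, more technical but essentially combinatorial, point is to follow how a coordinate projection of $\GOW(\BB)$ acts on the in-block mixed $\XX$-coordinates, so that one can select $A$ with the $\XX$-coordinate of $S_A^{\GOW(\BB)}(g)$ comparable to $S_B^{\BB}(f)$ for a subset $B$ witnessing $L_N[\BB]$; this is done in the same computational spirit as the partial-sum identity in the proof of Lemma~\ref{Twisting}. Feeding the resulting lower bound into Theorem~\ref{LogEstimate} for the quasi-greedy basis $\GOW(\BB)$ then yields the two-sided estimate $k_m[\GOW(\BB)]\approx\log m$ exploited later, but that is beyond the present statement.
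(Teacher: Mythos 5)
Your proposal follows essentially the same route as the paper: both reduce the statement to the transfer estimate extracted from the proof of \cite{GW2014}*{Lemma 3.6} (namely $L_{m_j}[\GOW(\BB)]\ge L_j[\BB]$ for $m_j=2^{j+1}-2$, i.e.\ your estimate with $\gamma=2$ and $c=1$), and then conclude by monotonicity of $L_m$ and the doubling property of $\phi$. The paper simply cites the GW2014 proof for the transfer estimate rather than re-deriving it, so your additional sketch of how the Olevskii-type blocks produce it is consistent with, but not required by, the paper's argument.
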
 

\begin{proof}The proof of \cite{GW2014}*{Lemma  3.6} reveals that 
$L_{m_j}[\GOW(\BB)]\ge L_j[\BB]$, where $j\in\NN$ and  $m_j=\sum_{k=1}^j 2^k=2^{j+1}-2$.  For $m\ge 2$, pick $j\in\NN$
such that $2^{j+1}-2\le m \le 2^{j+2}-3$. We have
\begin{align*}
L_m[\GOW(\BB)]
&\ge L_{m_j}[\GOW(\BB)] \ge \phi(j)
\ge \phi(\log_2((m+3)/4))
\ge \phi(\log_2(m)/4)\\
&\ge C \phi(\log(m)),
\end{align*}
where  $C$ depends on the doubling constant of $\phi$.
\end{proof}

\begin{theorem}\label{TailoringQGB}
Suppose $(\xx_n)_{n=1}^\infty$ is a basis of type P in a Banach space $\XX$.
Then the GOW-method applied to the sequence 
\[
\textstyle
(\xx_n)_{n=1}^\infty\diamond (\sum_{k=1}^n \xx_k)_{n=1}^\infty
\]
 yields a quasi-greedy basis $\BB=(\yy_n)_{n=1}^\infty$ for $\XX\oplus\XX\oplus\ell_2$ with 
\begin{itemize}
\item[(i)] $k_m[\BB]\approx L_m[\BB]\approx \log m\; \text{for}\; m\ge 2$, and
\item[(ii)] $\Vert \sum_{n\in A} \yy_n\Vert \approx |A|^{1/2}$ for any $A\subseteq\NN$ finite.
\end{itemize}
\end{theorem}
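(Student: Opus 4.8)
The plan is to combine the three key tools that the paper has just assembled. First I would unwind the definitions: if $\BB_1 = (\xx_n)_{n=1}^\infty$ is a basis of type P in $\XX$, then by Lemma~\ref{PProperty}(a) the sequence $\BB_2 := (\sum_{k=1}^n \xx_k)_{n=1}^\infty$ is a semi-normalized basis of type P* for its closed span $\XX_0 \subseteq \XX$ (in fact $\XX_0 = \XX$ since $\BB_1$ is a basis of $\XX$, but the span is all that matters). Thus Theorem~\ref{BasisInDirectSum} applies to the pair $(\BB_1, \BB_2)$ and yields that $\BB_1 \diamond \BB_2$ is a semi-normalized basis for $\XX \oplus \XX$ with $L_m[\BB_1 \diamond \BB_2] \approx m$ for $m \in \NN$.

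Next I would feed this basis into the GOW-method. By Theorem~\ref{TaylorGW2014}, $\BB := \GOW(\BB_1 \diamond \BB_2) = (\yy_n)_{n=1}^\infty$ is a quasi-greedy and democratic basis for $(\XX \oplus \XX) \oplus \ell_2 = \XX \oplus \XX \oplus \ell_2$ satisfying $\Vert \sum_{n \in A} \yy_n \Vert \approx |A|^{1/2}$ for every finite $A \subseteq \NN$; this is precisely conclusion (ii). For conclusion (i), the lower bound on the conditionality constants comes from Theorem~\ref{ConditionalityGOW}: taking $\phi(t) = ct$ (an increasing doubling function) and using $L_m[\BB_1 \diamond \BB_2] \gtrsim m \ge \phi(m)$, we get $L_m[\BB] \gtrsim \phi(\log m) \approx \log m$ for $m \ge 2$. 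Since $L_m[\BB] \le k_m[\BB]$ always, this gives the lower estimate $k_m[\BB] \gtrsim \log m$. The matching upper bound $k_m[\BB] \lesssim \log m$ is automatic from Theorem~\ref{LogEstimate}, because $\BB$ is quasi-greedy; chaining $\log m \lesssim L_m[\BB] \le k_m[\BB] \lesssim \log m$ forces $k_m[\BB] \approx L_m[\BB] \approx \log m$.

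I do not anticipate a genuine obstacle here, since the theorem is essentially an assembly of previously stated results; the only care needed is bookkeeping. One point to verify cleanly is that the ambient space after applying $\diamond$ is $\XX \oplus \XX$ rather than $\XX \oplus \XX_0$: because $\BB_1$ is a basis (not merely a basic sequence) of $\XX$, the partial-sum vectors span a dense subspace, so $\XX_0 = \XX$ and the final space is genuinely $\XX \oplus \XX \oplus \ell_2$ as stated. A second minor point is to confirm that the semi-normalization hypothesis of Theorem~\ref{BasisInDirectSum} is met: a type P basis is semi-normalized by Lemma~\ref{PProperty}(a), and the associated summing basis $\BB_2$ is semi-normalized type P* by the same lemma, so both hypotheses of Theorem~\ref{BasisInDirectSum} hold. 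Everything else is a direct quotation of Theorems~\ref{LogEstimate}, \ref{TaylorGW2014}, \ref{BasisInDirectSum}, and \ref{ConditionalityGOW}, so the proof is short.
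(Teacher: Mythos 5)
Your argument is correct and is essentially the paper's own proof: Lemma~\ref{PProperty}(a) to identify the summing sequence as a type P* basis, Theorem~\ref{BasisInDirectSum} for $L_m\approx m$, then Theorems~\ref{TaylorGW2014} and \ref{ConditionalityGOW} (plus Theorem~\ref{LogEstimate} for the upper bound, which the paper leaves implicit). The extra bookkeeping you flag (that the summing basis spans all of $\XX$, and that semi-normalization holds) is sound and harmless.
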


\begin{proof}By Lemma~\ref{PProperty}(a), the sequence $(\sum_{k=1}^n \xx_k)_{n=1}^\infty$ is a basis of type P* for $\XX$. Now, the result follows by combining Theorem~\ref{BasisInDirectSum},
Theorem~\ref{ConditionalityGOW}, and Theorem~\ref{TaylorGW2014}.
\end{proof} 

In light of Theorem~\ref{TailoringQGB}, in order to build a basis as in Theorem~\ref{GoodTypeButBadConstants} the remaining ingredient is a basis of type P in a Banach space of suitable type and cotype. Our construction of such a basis will rely  on a construction by Pisier and Xu \cite{PisierXu1987} of non-reflexive Banach spaces of non-trivial type and a non-trival cotype, based on the real interpolation method. 
 
Given a \textit{compatible couple} $(\XX_0,\XX_1)$ of Banach spaces, the \textit{real interpolation space} of indices $0<\theta<1$ and $1\le q<\infty$ is defined by
\[
(\XX_0,\XX_1)_{\theta,q}=\left\{f\in\XX_0+\XX_1 \colon \Vert f\Vert_{\theta,q}^q=
\int_0^\infty K^q(f,t,\XX_0,\XX_1) \frac{dt}{t^{1+ \theta q}} <\infty \right\}
\]
where, for $t>0$,
\[
K(f,t,\XX_0,\XX_1)=\inf \{ \Vert f_0\Vert_{\XX_0} + t \Vert f_1\Vert_{\XX_1} \colon f_0\in\XX_0,f_1\in\XX_1, f=f_0+ f_1\}.
\]

Real interpolation provides an exact interpolation scheme, i.e., if $T\colon (\XX_0,\XX_1) \to (\YY_0,\YY_1)$ is an admissible operator between two compatible couples (that is, $T$ is linear from $\XX_0+\XX_1$ into $\YY_0+\YY_1$ and bounded from $\XX_i$ into $\YY_i$, $i=0,1$) then
\[
\textstyle
\Vert T\colon (\XX_0,\XX_1)_{\theta,q} \to (\YY_0,\YY_1)_{\theta,q}\Vert 
\le \max_{i=0,1} \Vert T\colon \XX_i\to \YY_i\Vert.
\]
Here, at it is customary, $\Vert T\colon \XX\to\YY\Vert$ denotes the norm of the operator $T$ when regarded as an operator from $\XX$ into $\YY$. In particular we have that
\[
\XX_0\cap \XX_1 \subseteq (\XX_0,\XX_1)_{\theta,q} \subseteq \XX_0+\XX_1,
\]
with norm-one inclusions. Recall that the norms in $\XX_0\cap \XX_1$ and $\XX_0+\XX_1$ are respectively given by 
\[
\Vert f \Vert_{\XX_0\cap\XX_1}=\max\{\Vert f\Vert_{\XX_0},\Vert f\Vert_{\XX_1}\}, \]
and
\[
\Vert f \Vert_{\XX_0 + \XX_1}=K(f,1,\XX_0,\XX_1).
\]

The behavior of bases with respect to the real interpolation method is described in the next proposition.
\begin{proposition}
\label{InterpolatingBases} 
Let $(\XX_0,\XX_1)$ be a compatible couple of Banach spaces and let $\BB=(\xx_n)_{n=1}^\infty$ be a sequence in $\XX_0\cap \XX_1$ such that $[\xx_n \colon n\in\NN]=\XX_0\cap \XX_1$.  Assume that $\BB$ is a basis for $\XX_i$, $i=0,1$. Then $\BB$ is a basis for $(\XX_0,\XX_1)_{\theta,q}$ for every $0<\theta<1$ and every $ 1\le q<\infty$. Moreover:
\begin{itemize}
\item[(a)] If $\BB$ is semi-normalized in both $\XX_0$ and $\XX_1$, then $\BB$ is semi-normalized in $(\XX_0,\XX_1)_{\theta,q}$.
\item[(b)] If $\BB$ is of type P in both $\XX_0$ and $\XX_1$, then $\BB$ is of type P in $(\XX_0,\XX_1)_{\theta,q}$.
\end{itemize}
\end{proposition}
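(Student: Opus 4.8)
The plan is to exploit systematically the fact, recalled just above, that real interpolation is an exact interpolation functor, so that the partial sum operators $S_m[\BB]$ — which are admissible operators on the couple $(\XX_0,\XX_1)$ because $\BB$ is a basis for both $\XX_0$ and $\XX_1$ — automatically satisfy
\[
\Vert S_m[\BB] \colon (\XX_0,\XX_1)_{\theta,q}\to(\XX_0,\XX_1)_{\theta,q}\Vert \le \max_{i=0,1}\Vert S_m[\BB]\colon\XX_i\to\XX_i\Vert = \max_{i=0,1} K_i,
\]
where $K_i$ is the basis constant of $\BB$ in $\XX_i$. This gives the uniform bound $\sup_m\Vert S_m[\BB]\Vert<\infty$ on the interpolation space, which is condition (iii) in the definition of a Schauder basis. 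Conditions (i) and (ii) are then essentially formal: the biorthogonal functionals of $\BB$ relative to $\XX_0$ and $\XX_1$ coincide (uniqueness of biorthogonal functionals, since $\XX_0\cap\XX_1$ is dense in both and $\BB$ spans it), so they restrict to functionals on $(\XX_0,\XX_1)_{\theta,q}$ with the right biorthogonality relations; and the closed linear span of $\BB$ in the interpolation norm contains $\XX_0\cap\XX_1$ (which is norm-dense in $(\XX_0,\XX_1)_{\theta,q}$ for $q<\infty$), whence it is all of $(\XX_0,\XX_1)_{\theta,q}$. Putting these together shows $\BB$ is a basis for $(\XX_0,\XX_1)_{\theta,q}$.

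For part (a), I would argue directly from the norm-one inclusions $\XX_0\cap\XX_1\subseteq(\XX_0,\XX_1)_{\theta,q}\subseteq\XX_0+\XX_1$. For the upper bound on $\Vert\xx_n\Vert_{\theta,q}$, one can use the elementary estimate $\Vert f\Vert_{\theta,q}\lesssim_{\theta,q}\Vert f\Vert_{\XX_0}^{1-\theta}\Vert f\Vert_{\XX_1}^{\theta}$ (interpolating the defining integral by splitting at $t=\Vert f\Vert_{\XX_0}/\Vert f\Vert_{\XX_1}$), so $\sup_n\Vert\xx_n\Vert_{\theta,q}<\infty$ follows from $\sup_n\Vert\xx_n\Vert_{\XX_i}<\infty$, $i=0,1$. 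For the lower bound, I would instead estimate the biorthogonal functionals: each $\xx_n^*$ is bounded on $\XX_0\cap\XX_1$ with norm $\le\min_i\Vert\xx_n^*\Vert_{\XX_i^*}$ against the $\XX_0+\XX_1$ norm, hence (by the second inclusion) bounded on $(\XX_0,\XX_1)_{\theta,q}$ with norm uniformly bounded in $n$; since $\xx_n^*(\xx_n)=1$ this forces $\inf_n\Vert\xx_n\Vert_{\theta,q}>0$. By Lemma~\ref{SemiNormalizedCharacterization} (or its quantitative content) this is exactly semi-normalization.

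Part (b) combines (a) with the partial-sum bound of type $\sum_{k=1}^n\xx_k$. Type P requires $\inf_n\Vert\xx_n\Vert>0$ — already provided by (a) — together with $\sup_n\bigl\Vert\sum_{k=1}^n\xx_k\bigr\Vert<\infty$; the latter follows again from the interpolation inequality $\bigl\Vert\sum_{k=1}^n\xx_k\bigr\Vert_{\theta,q}\lesssim_{\theta,q}\bigl\Vert\sum_{k=1}^n\xx_k\bigr\Vert_{\XX_0}^{1-\theta}\bigl\Vert\sum_{k=1}^n\xx_k\bigr\Vert_{\XX_1}^{\theta}$, which is uniformly bounded because $\BB$ is of type P in each $\XX_i$. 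I expect the only genuinely delicate point to be bookkeeping around the case distinctions $f$ finitely versus infinitely supported and making sure the density of $\XX_0\cap\XX_1$ in $(\XX_0,\XX_1)_{\theta,q}$ (valid precisely because $q<\infty$) is invoked correctly; the interpolation-inequality estimates and the exactness of the functor do all the analytic work, so no hard estimate is involved.
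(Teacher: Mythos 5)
Your proposal is correct and follows essentially the same route as the paper: the partial-sum projections are admissible operators on the couple (because the biorthogonal functionals of $\BB$ in $\XX_0$ and $\XX_1$ agree on the dense subspace $\XX_0\cap\XX_1$), exactness of the real method gives $\sup_m\Vert S_m\Vert\le\max\{K_0,K_1\}$ on $(\XX_0,\XX_1)_{\theta,q}$, and density of $\XX_0\cap\XX_1$ (valid since $q<\infty$) finishes the basis claim; part (a) is likewise handled via the norm-one inclusions together with uniform bounds on the $\xx_n^*$, exactly in the spirit of Lemma~\ref{SemiNormalizedCharacterization}. Two small remarks: for part (b) you verify the definition of type P directly ($\inf_n\Vert\xx_n\Vert>0$ plus $\sup_n\Vert\sum_{k=1}^n\xx_k\Vert_{\theta,q}\le\sup_n\Vert\sum_{k=1}^n\xx_k\Vert_{\XX_0\cap\XX_1}<\infty$), which is a perfectly valid and slightly more direct route than the paper's detour through Lemma~\ref{PProperty}(a)--(b) applied to $\BB_0=(\sum_{k=1}^n\xx_k)_n$; and in your lower-bound argument the estimate for $\xx_n^*$ on $\XX_0+\XX_1$ should read $\max_i\Vert\xx_n^*\Vert_{\XX_i^*}$ rather than $\min_i$ (from $|\xx_n^*(f_0+f_1)|\le A_0\Vert f_0\Vert_{\XX_0}+A_1\Vert f_1\Vert_{\XX_1}$ one only gets the maximum), a slip that does not affect the uniform boundedness you need.
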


\begin{proof} For $i=0,1$ let $(\xx_n^{*,i})_{n=1}^\infty$ and $K_{i}$ be, respectively, the biorthogonal functionals and the basis constant of $\BB$ in  $\XX_i$. For $n\in\NN$ we have that $\xx_n^{*,0}(\xx_k)= \xx_n^{*,1}(\xx_k)$ for any $k\in\NN$ and that $\xx_n^{*,i}$ is a continuous map in the topology of $\XX_0\cap\XX_1$, $i=0,1$. Thus $\xx_n^{*,0}(f)= \xx_n^{*,1}(f)$ for all $f\in \XX_0\cap\XX_1$ and so there is an admissible operator $\xx_n^*\colon (\XX_0,\XX_1) \to (\FF,\FF)$ that verifies $\xx_n^{*}(\xx_k)=\delta_{k,n}$ for every $k\in\NN$. Therefore, for $m\in\NN$ we can safely define an admissible operator $S_m\colon (\XX_0,\XX_1) \to (\XX_0,\XX_1)$ by $S_m(f)=\sum_{n=1}^m \xx_n^*(f) \xx_n$. By interpolation,
\begin{align*}
\Vert S_m\colon (\XX_0,\XX_1)_{\theta,q} \to (\XX_0,\XX_1)_{\theta,q}\Vert &
\le \max_{i=0,1} \Vert S_m\colon \XX_i\to \XX_i\Vert
\\
&\le \max\{ K_0,K_1\}.
\end{align*}
Since the linear span of $\BB$ is dense in $\XX_0\cap\XX_1$, and $\XX_0\cap\XX_1$ is dense in $(\XX_0,\XX_1)_{\theta,q}$ (see \cite{BenSha1988}*{Theorem 2.9}) we infer that $\BB$ is a basis for $(\XX_0,\XX_1)_{\theta,q}$.

Assume now that $\BB$ is semi-normalized in $\XX_i$, $i=0,1$. Then, by Lemma~\ref{SemiNormalizedCharacterization},
\begin{equation*}
\textstyle
C=
\sup_{n\in\NN, i=0,1} \{ \Vert \xx_n\Vert_{\XX_i} , \Vert \xx_n^* \colon \XX_i\to \FF\Vert \} <\infty.
\end{equation*}
Hence, for all $n\in\NN$,
\[
\Vert \xx_n^* \colon (\XX_0,\XX_1)_{\theta,q}\to\FF\Vert \le \max_{i=0,1} \Vert \xx_n^* \colon \XX_i \to \FF \Vert \le C,
\]
and 
\[
\Vert \xx_n\Vert_{\theta,q} \le \Vert \xx_n\Vert_{\XX_0\cap\XX_1} \le C.
\]
By Lemma~\ref{SemiNormalizedCharacterization}, the basis is semi-nomalized in $(\XX_0,\XX_1)_{\theta,q}$.

Finally, assume that $\BB$ is of type P in $\XX_i$, $i=0,1$. Let $\BB_0=(\sum_{k=1}^n \xx_k)_{n=1}^\infty$. By Lemma~\ref{PProperty}(a) and Lemma~\ref{PProperty}(b), both $\BB$ and $\BB_0$ are semi-normalized bases for $\XX_i$, $i=0,1$. Hence, by the already proved part of this proposition, both $\BB$ and $\BB_0$ are semi-normalized bases for   $(\XX_0,\XX_1)_{\theta,q}$. From Lemma~\ref{PProperty}(a), $\BB$ is of type P in the interpolated space.
 \end{proof}

 Let $v_1$ be the space of all sequences of scalars of bounded variation, i.e.,
\[
\textstyle
v_1=\{ f=(a_n)_{n=1}^\infty \colon \Vert f \Vert_{v_1}= |a_1|+\sum_{n=1}^\infty 
|a_{n+1}-a_n|<\infty\}.
\]
The Banach space $v_1$ is nothing but $\ell_1$ in a rotated position. Indeed, the linear bijection
\begin{equation}\label{MapQ}
\textstyle
Q\colon \FF^\NN \to \FF^\NN, \quad (a_n)_{n=1}^\infty\mapsto \left(\sum_{k=1}^n a_k\right)_{n=1}^\infty
\end{equation}
restricts to an isometry from $\ell_1$ onto $v_1$. Via this isometry the functional on $\ell_1$ given by $ (a_n)_{n=1}^\infty\mapsto\sum_{n=1}^\infty a_n$ induces a functional on $v_1$ given by $ (a_n)_{n=1}^\infty\mapsto\lim_n a_n$. Therefore
\[
v_1\subseteq c:=\{ (a_n)_{n=1}^\infty \colon \exists \lim_n a_n\in\FF\}\subseteq\ell_\infty.
\]
Let $v_1^0$ be the subspace of codimension one of $v_1$ that corresponds to
\[
\textstyle
\ell_1^0=\{ (a_n)_{n=1}^\infty \in \ell_1 \colon \sum_{n=1}^\infty a_n=0\}
\]
under the isometry $Q$, i.e., 
\[
v_1^0=v_1\cap c_0= \{ (a_n)_{n=1}^\infty \in v_1 \colon \lim_n a_n=0\}.
\]

Consider also the linear mapping
\begin{equation}\label{MapR}
R\colon \FF^\NN \to \FF^\NN,
\quad
 (a_n)_{n=1}^\infty \mapsto (a_{n+1}-a_1)_{n=1}^\infty.
 \end{equation}
 It is clear that $R$ restricts to an isomorphism from $c_0$ onto $c$ with inverse
 \begin{equation}\label{MapT}
 T\colon c \to c_0,
\quad
 (a_n)_{n=1}^\infty\mapsto (a_{n-1}-\lim_n a_n)_{n=1}^\infty \, (\text{where $a_0=0$}).
 \end{equation}
 It is not hard to see that $R$ restricts as well to and isomorphism from $v_1^0$ onto $v_1$. To realize that we can use that the lifting
 \begin{equation}\label{MapL}
 L\colon \FF^\NN\to \FF^\NN, \quad
 (a_n)_{n=1}^\infty\mapsto (a_{n+1})_{n=1}^\infty
 \end{equation} defines an isomorphism from
$\ell_1^0$ onto $\ell_1$ whose inverse is given by 
$(b_n)_{n=1}^\infty\mapsto ( -\sum_{n=1}^\infty b_n, b_1, b_2,\dots)$, and that $Q\circ L=R\circ Q$.

\bigskip
 Pisier and Xu \cite{PisierXu1987} investigated the interpolated spaces $(v_1,\ell_\infty)_{\theta,q}$ for  $0<\theta<1$ and $1\le q<\infty$. However, the space $(v_1^0,c_0)_{\theta,q}$ is more fit for our purposes.
 
\begin{lemma}[cf.\ \cite{Pisier2016}*{Section 12.2}] Let $0<\theta<1$ and $1\le q<\infty$. Then:
\begin{itemize}

\item[(a)] The space $(v_1,\ell_\infty)_{\theta,q}=(v_1,c)_{\theta,q}$ with equality of norms.

\item[(b)] The space $(v_1^0,c_0)_{\theta,q}$ is isomorphic to $(v_1,\ell_\infty)_{\theta,q}$.

\item[(c)]  $(v_1^0,c_0)_{\theta,q}$ is a subspace of codimension one of $(v_1,\ell_\infty)_{\theta,q}$. 

\end{itemize}
\end{lemma}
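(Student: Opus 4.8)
The plan is to prove the three parts in turn, using only the exactness of the real interpolation scheme, the continuous inclusions $\XX_0\cap\XX_1\subseteq(\XX_0,\XX_1)_{\theta,q}\subseteq\XX_0+\XX_1$, and the density of $\XX_0\cap\XX_1$ in $(\XX_0,\XX_1)_{\theta,q}$ for $q<\infty$ (already invoked via \cite{BenSha1988}*{Theorem 2.9} in the proof of Proposition~\ref{InterpolatingBases}). For part (a), the crucial point is that, although the sum space of $(v_1,\ell_\infty)$ is $\ell_\infty$, the space $(v_1,\ell_\infty)_{\theta,q}$ is in fact contained in $c$: since $v_1\cap\ell_\infty=v_1$, density together with the continuous inclusion $(v_1,\ell_\infty)_{\theta,q}\hookrightarrow\ell_\infty$ gives $(v_1,\ell_\infty)_{\theta,q}\subseteq\overline{v_1}^{\,\ell_\infty}$, and $\overline{v_1}^{\,\ell_\infty}=c$ because $c$ is closed in $\ell_\infty$ and contains the eventually constant sequences, which lie in $v_1$ and are $\|\cdot\|_\infty$-dense in $c$. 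Now, for every $f\in c$ and every decomposition $f=f_0+f_1$ with $f_0\in v_1\subseteq c$ one automatically has $f_1=f-f_0\in c$, so the infima defining $K(f,t,v_1,\ell_\infty)$ and $K(f,t,v_1,c)$ run over the same set of decompositions; hence these $K$-functionals agree on $c$. Since also $(v_1,c)_{\theta,q}\subseteq v_1+c=c$, both interpolation norms are computed on $c$ by the same integral, and (a) follows.

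For part (b), I would push the isomorphism $R$ of \eqref{MapR} through the interpolation functor. Because $v_1^0\subseteq c_0$ and $v_1\subseteq c$, the sum spaces are $v_1^0+c_0=c_0$ and $v_1+c=c$; by the remarks following \eqref{MapT} and \eqref{MapL}, $R$ restricts to isomorphisms $c_0\to c$ and $v_1^0\to v_1$ sharing the common inverse $T$ of \eqref{MapT}, so $R$ is an isomorphism of the compatible couple $(v_1^0,c_0)$ onto $(v_1,c)$. Applying the exactness of real interpolation to $R$ and to $T$ shows that $R$ induces an isomorphism $(v_1^0,c_0)_{\theta,q}\cong(v_1,c)_{\theta,q}$, which equals $(v_1,\ell_\infty)_{\theta,q}$ by part (a).

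For part (c), I would compare the $K$-functionals of $(v_1^0,c_0)$ and $(v_1,c)$ on $c_0$. One inequality, $K(f,t,v_1,c)\le K(f,t,v_1^0,c_0)$, is immediate from $v_1^0\subseteq v_1$ and $c_0\subseteq c$. For the reverse, given $f\in c_0$ and a decomposition $f=f_0+f_1$ with $f_0\in v_1$ and $f_1\in c$, set $\ell=\lim_n f_0(n)=-\lim_n f_1(n)$ (using $\lim_n f(n)=0$) and $\mathbf{1}=(1,1,1,\dots)\in v_1$; then $f=(f_0-\ell\mathbf{1})+(f_1+\ell\mathbf{1})$ has first term in $v_1^0$, second term in $c_0$, and, since $|\ell|\le\|f_0\|_\infty\le\|f_0\|_{v_1}$ and $|\ell|\le\|f_1\|_\infty$, the two norms at most double, so $K(f,t,v_1^0,c_0)\le 2\,K(f,t,v_1,c)$. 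Hence, by part (a), $(v_1^0,c_0)_{\theta,q}$ coincides, as a set of sequences and up to an equivalent norm, with $\{f\in(v_1,\ell_\infty)_{\theta,q}\colon\lim_n f(n)=0\}$. The functional $f\mapsto\lim_n f(n)$ is continuous on $(v_1,\ell_\infty)_{\theta,q}$ (it factors through the continuous inclusion into $\ell_\infty$) and nonzero there, as $\mathbf{1}\in v_1=v_1\cap\ell_\infty\subseteq(v_1,\ell_\infty)_{\theta,q}$; thus its kernel is a closed hyperplane, which proves (c).

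The step I expect to require the most care is the containment $(v_1,\ell_\infty)_{\theta,q}\subseteq c$ used in part (a); once that is established, identifying the $K$-functionals and carrying out the hyperplane bookkeeping in parts (b) and (c) is routine.
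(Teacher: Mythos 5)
Your proof is correct, and in two of the three parts it takes a genuinely different route from the paper. For (a), the paper proves the key containment $(v_1,\ell_\infty)_{\theta,q}\subseteq c$ by a direct $K$-functional estimate: for $f\in\ell_\infty\setminus c$ and any splitting $f=g+h$ with $g\in v_1$, the oscillation of $f$ at infinity must be carried by $h$, whence $K(f,t,v_1,\ell_\infty)\gtrsim t$ and $\Vert f\Vert_{\theta,q}=\infty$. Your soft argument — density of $\XX_0\cap\XX_1=v_1$ in the interpolation space for $q<\infty$, continuity of the inclusion into $\XX_0+\XX_1=\ell_\infty$, and closedness of $c$ in $\ell_\infty$ — reaches the same conclusion with less computation, at the cost of invoking the density theorem (which the paper uses elsewhere anyway); the paper's version is self-contained and quantitative. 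Part (b) is identical: both push $R$ and its inverse $T$ through the exact interpolation functor. For (c), the paper interpolates the pair $(P,L)$ to obtain $(v_1,c)_{\theta,q}\approx\FF\oplus(v_1^0,c_0)_{\theta,q}$ and checks that $L\circ T$ is the identity on $c_0$, i.e., it exhibits the complemented hyperplane abstractly; you instead compare $K$-functionals on $c_0$ directly, correcting an arbitrary decomposition by $\ell\mathbf{1}$ to get $K(f,t,v_1,c)\le K(f,t,v_1^0,c_0)\le 2K(f,t,v_1,c)$, which identifies $(v_1^0,c_0)_{\theta,q}$ concretely as the kernel of the continuous functional $f\mapsto\lim_n f(n)$ with an explicit two-sided norm equivalence. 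Your version is more hands-on and yields a sharper description of the embedding; the paper's is shorter once the maps $P$, $L$, $T$ are already on the table. All the small verifications you rely on ($|\ell|\le\Vert f_0\Vert_{v_1}$, $\Vert\ell\mathbf{1}\Vert_{v_1}=|\ell|$, nonvanishing of the limit functional on $\mathbf{1}\in v_1$) check out.
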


\begin{proof}
In order to prove (a) it suffices to show that $(v_1,\ell_\infty)_{\theta,q}\subseteq c$. Let  $f=(a_n)_{n=1}^\infty\in\ell_\infty\setminus c$. There exists $\varepsilon>0$ such that for every $j\in\NN$ there are $n\ge k\ge j$ verifying $|a_n-a_k|\ge\varepsilon$. Let $f=g+h$ with $g=(b_n)_{n=1}^\infty\in v_1$ and  $h=(c_n)_{n=1}^\infty\in \ell_\infty$. Since $g\in c$, there is $j\in\NN$ such that $|b_n-b_k|\le \varepsilon/2$ for every $n\ge k\ge j$. We infer that  $|c_n-c_k|\ge \varepsilon/2$ for some $n\ge k\ge j$. Therefore $\Vert h\Vert_\infty\ge \varepsilon/4$. Consequently, $K(f,t,v_1,\ell_\infty)\ge t \varepsilon/4$ for every $t>0$, thus $\Vert f\Vert_{\theta,q}=\infty$.

 It follows by interpolation that the mapping $R$  in \eqref{MapR} defines an isomorphism from $(v_1^0,c_0)_{\theta,q}$ onto $ (v_1,c)_{\theta,q}$ whose inverse is a restriction of the operator $T$ given in \eqref{MapT}. Hence, (b) holds.

Let $P\colon \FF^\NN \to\FF$ be the projection onto the first coordinate and $L$ be as in \eqref{MapL}. By interpolation, the map  $(P,L)$ is an isomorphism from $ (v_1^0,c_0)_{\theta,q}$ onto $\FF\oplus (v_1^0,c_0)_{\theta,q}$. Hence, $ (P,L)\circ T=(P\circ T, L\circ T)$ is an isomorphism from $(v_1,c)_{\theta,q}$ onto $\FF\oplus (v_1^0,c_0)_{\theta,q}$. The proof is over by checking that $L\circ T$ is the identity map on $c_0$.
\end{proof}

Hereinafter the \textit{canonical basis} of $\FF^\NN$ will be denoted by $(\ee_n)_{n=1}^\infty$, i.e., $\ee_n=(\delta_{k,n})_{k=1}^\infty$, while $(\sss_n)_{n=1}^\infty$ will denote the \textit{summing basis} of $\FF^\NN$, i.e., $\sss_n=\sum_{k=1}^n \ee_k$.

\begin{proposition}[cf.\ \cite{Pisier2016}*{Proposition 12.7}]\label{BasisPisierXu} Let $0<\theta<1$ and $1\le q<\infty$. Then $(\ee_n)_{n=1}^\infty$ is a basis of type P for $(v_1^0,c_0)_{\theta,q}$.
\end{proposition}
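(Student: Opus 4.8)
The plan is to obtain this as an immediate application of Proposition~\ref{InterpolatingBases}(b) to the compatible couple $(\XX_0,\XX_1)=(v_1^0,c_0)$. Since $v_1^0\subseteq c_0$ and $\|f\|_\infty\le\|f\|_{v_1}$ for every $f\in v_1$, the intersection space is $\XX_0\cap\XX_1=v_1^0$ endowed precisely with the $v_1$-norm (the ``maximum'' degenerates), and the relevant closure $[\ee_n\colon n\in\NN]$ is taken in that norm. So all that is needed is to verify that $(\ee_n)_{n=1}^\infty$ is a basis of type~P of $v_1^0$ and a basis of type~P of $c_0$, and then quote the proposition.

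First I would record the elementary norm computations: each $\ee_n$ has finite support, so $\ee_n\in v_1^0$, and a direct check gives $\|\ee_n\|_{v_1}=2$ and $\|\sss_n\|_{v_1}=2$ for all $n$, while of course $\|\ee_n\|_\infty=\|\sss_n\|_\infty=1$. Next I would prove that the finitely supported vectors are dense in $v_1^0$: for $f=(a_n)_n\in v_1^0$ the truncation error $\|f-\sum_{k\le N}a_k\ee_k\|_{v_1}$ equals $|a_{N+1}|+\sum_{n>N}|a_{n+1}-a_n|$, which tends to $0$ because $a_n\to0$ and $\sum_n|a_{n+1}-a_n|<\infty$; this is exactly the step where it is essential to work with $v_1^0$, whose elements vanish at infinity, rather than with $v_1$. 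The partial-sum projections $S_m$ of $(\ee_n)$ act on $v_1^0$ as coordinate truncations, and using the identity $a_m=-\sum_{n\ge m}(a_{n+1}-a_n)$ (valid since $a_n\to0$) one gets $\|S_m(f)\|_{v_1}\le\|f\|_{v_1}$, so $(\ee_n)$ is a monotone semi-normalized basis of $v_1^0$; together with the bounds $\|\ee_n\|_{v_1}=\|\sss_n\|_{v_1}=2$ this gives type~P in $v_1^0$. In $c_0$ everything is classical: $(\ee_n)$ is the canonical normalized monotone basis, of type~P because $\|\ee_n\|_\infty=\|\sss_n\|_\infty=1$.

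Having verified these hypotheses, Proposition~\ref{InterpolatingBases}(b) then yields that $(\ee_n)_{n=1}^\infty$ is a basis of type~P for $(v_1^0,c_0)_{\theta,q}$ for every $0<\theta<1$ and every $1\le q<\infty$, which is the claim.

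I do not anticipate a real obstacle: the argument is a handful of norm estimates in $v_1$ and $c_0$ feeding into Proposition~\ref{InterpolatingBases}. The only points requiring a little care are reading the intersection norm correctly and observing that both the density statement and the uniform bound on the partial sums in the $v_1$-norm hinge on the condition $\lim_n a_n=0$ that distinguishes $v_1^0$ from $v_1$. Alternatively one could transport the problem to $\ell_1$ via the isometry $Q$ of \eqref{MapQ}, which carries $\ell_1^0$ onto $v_1^0$ and the difference sequence $(\ee_n-\ee_{n-1})_{n=1}^\infty$ (with $\ee_0=0$) onto $(\ee_n)_{n=1}^\infty$, and check type~P of that difference basis in $\ell_1^0$ instead; but the direct computation in $v_1^0$ is shorter.
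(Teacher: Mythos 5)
Your argument is correct, and its skeleton coincides with the paper's: establish that $(\ee_n)_{n=1}^\infty$ is of type P in both endpoint spaces $v_1^0$ and $c_0$, then invoke Proposition~\ref{InterpolatingBases}(b) (the paper cites ``(c)'', a typo for (b)). The only genuine difference is how the $v_1^0$ endpoint is handled. The paper transports the problem: it dualizes the type-P basis of $c_0$ to get that $(\ee_n)$ is of type P* in $\ell_1$, uses the isomorphism $T\circ Q$ from $\ell_1$ onto $v_1^0$ (which sends $\ee_n$ to $-\sss_n$) to conclude that $(\sss_n)$ is of type P* in $v_1^0$, and then applies Lemma~\ref{PProperty}(b) to the difference sequence $(\sss_n-\sss_{n-1})=(\ee_n)$. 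You instead compute directly in $v_1^0$: monotonicity of the coordinate projections via $a_m=-\sum_{n\ge m}(a_{n+1}-a_n)$, density of the finitely supported vectors, and the explicit values $\Vert\ee_n\Vert_{v_1}=\Vert\sss_n\Vert_{v_1}=2$. Your route is more elementary and self-contained (and you rightly flag the two places where $\lim_n a_n=0$ is essential, which is why $v_1^0$ rather than $v_1$ is the correct endpoint); the paper's route is shorter on the page because it recycles Lemma~\ref{PProperty} and the maps $Q$, $T$ already set up for the preceding lemma, and it is essentially the ``alternative'' you sketch in your last sentence. One small point worth making explicit either way: Proposition~\ref{InterpolatingBases} also requires $[\ee_n\colon n\in\NN]=\XX_0\cap\XX_1$, and your observation that $\XX_0\cap\XX_1=v_1^0$ with the $v_1$-norm, together with your density computation, covers exactly that hypothesis.
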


\begin{proof} It is clear and well known that $(\ee_n)_{n=1}^\infty$ is a basis of type P for $c_0$. Dualizing we see that $(\ee_n)_{n=1}^\infty$ is a basis of type P* for $\ell_1$. Let $Q$ and $T$ be as in \eqref{MapQ} and \eqref{MapT} respectively. We have that $T\circ Q$ is an isomorphism from $\ell_1$ onto $v_1^0$ and that  $T\circ Q(\ee_n)=-\sss_n$. Hence  $(\sss_n)_{n=1}^\infty$ is a basis of type P* for $v_1^0$.  By Lemma~\ref{PProperty}(b), $(\ee_n)_{n=1}^\infty$ is a basis of type P for $v_1^0$. We complete the proof by an appeal to Proposition~\ref{InterpolatingBases}(c).
\end{proof}

\begin{example} Let $0<\theta<1$, $1\le q<\infty$, and $\XX= (v_1^0,c_0)_{\theta,q}$.   If $2<(1-\theta)^{-1}\le q$ then $\XX$ (which is isomorphic to $(v_1,\ell_\infty)_{\theta,q}$) has type $2$ and cotype $q$, while if $q\le (1-\theta)^{-1} <2$ then $\XX$ has type $q$ and cotype $2$ (see \cite{PisierXu1987}*{Theorem 1.1}). Obviously, $\XX\oplus\XX\oplus\ell_2$ is of the same type and cotype as $\XX$. Finally, by Theorem~\ref{TailoringQGB} and Proposition~\ref{BasisPisierXu},  the GOW-method applied to $(\ee_n)_{n=1}^\infty\diamond (\sss_n)_{n=1}^\infty$ yields a quasi-greedy basis $\BB$ for $\XX\oplus\XX\oplus\ell_2$ with $k_m[\BB]\approx L_m[\BB] \approx \log m$ for $m\ge 2$.
\end{example}

\section{New characterizations of Superreflexivity}
\label{CharacterizationSection}
\noindent
In this section we will repeatedly  use  the following refinement of the technique used by Mazur for solving the basic sequence problem.
\begin{lemma}\label{BasicSequenceLemma} Let $\varepsilon>0$ and $\XX_0$ be a finite-dimensional subspace of a Banach space $\XX$. There is a finite co-dimensional subspace $\YY\subseteq\XX$ such that
\[
\Vert f\Vert \le (1+\varepsilon) \Vert f+ g\Vert, \quad f\in\XX_0, \, g\in\YY.
\]
\end{lemma}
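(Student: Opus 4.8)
This is the classical "small perturbation" lemma underlying Mazur's construction of basic sequences; the refinement here is merely that the complemented subspace $\YY$ can be taken of finite codimension, which is automatic from the compactness argument. The plan is as follows. First I would pass to the unit sphere $S_{\XX_0}=\{f\in\XX_0:\Vert f\Vert=1\}$, which is compact since $\XX_0$ is finite-dimensional. For each $f\in S_{\XX_0}$ choose, by Hahn–Banach, a norming functional $\phi_f\in\XX^*$ with $\Vert\phi_f\Vert=1$ and $\phi_f(f)=1$. By compactness and continuity of the norm, pick a finite $\delta$-net $f_1,\dots,f_N$ of $S_{\XX_0}$ (with $\delta>0$ to be specified in terms of $\varepsilon$), and set
\[
\YY=\bigcap_{i=1}^{N}\ker\phi_{f_i}.
\]
This $\YY$ is a closed subspace of finite codimension (at most $N$) in $\XX$.

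Next I would verify the estimate. Fix $f\in\XX_0$ with $\Vert f\Vert=1$ and $g\in\YY$. Choose $f_i$ with $\Vert f-f_i\Vert\le\delta$. Then
\[
\Vert f+g\Vert\ge|\phi_{f_i}(f+g)|=|\phi_{f_i}(f)|=|\phi_{f_i}(f_i)+\phi_{f_i}(f-f_i)|\ge 1-\delta,
\]
using $\phi_{f_i}(g)=0$ and $\Vert\phi_{f_i}\Vert=1$. Hence $\Vert f+g\Vert\ge(1-\delta)\Vert f\Vert$ for all $f\in\XX_0$, $g\in\YY$ by homogeneity. Choosing $\delta>0$ small enough that $(1-\delta)^{-1}\le 1+\varepsilon$ gives the claimed inequality $\Vert f\Vert\le(1+\varepsilon)\Vert f+g\Vert$.

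The only point requiring a little care is the choice of $\delta$ as a function of $\varepsilon$; since $1/(1-\delta)\to 1$ as $\delta\to 0$, any $\delta<\varepsilon/(1+\varepsilon)$ works, so this is routine. There is no real obstacle here: the finite-dimensionality of $\XX_0$ supplies the compactness needed for the net, and Hahn–Banach supplies the norming functionals; the finite codimension of $\YY$ comes for free from the net being finite. If one wants the cleanest bookkeeping, one can instead first replace $\XX_0$ by noting that the statement is scaling-invariant and work directly on the sphere as above.
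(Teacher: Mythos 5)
Your proof is correct and is essentially the argument the paper invokes: the paper simply cites the proof of Lemma~1.a.6 in Lindenstrauss--Tzafriri, which is exactly this compactness-plus-norming-functionals construction with $\YY=\bigcap_i\ker\phi_{f_i}$. Nothing further is needed.
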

\begin{proof}
Although not explicitly stated in this form, the proof of  \cite{LinTza1977}*{Lemma 1.a.6} yields this result.
\end{proof}

The longed for Theorem~\ref{CharacterizationSuperR} is now a part of the following theorem.

\begin{theorem}\label{MainTheorem} Let $\XX$ be a Banach space. The following are equivalent:

\begin{itemize}

\item[(a)] $\XX$ is not superreflexive.

\item[(b)] There is a Banach space finitely representable in $\XX$ possessing a basis $\BB$ with $L_m[\BB]\approx m$ for $m\in\NN$.

\item[(c)] There is a Banach space finitely representable in $\XX$ possessing a quasi-greedy basis $\BB$ with 
 $L_m[\BB]\approx \log m$ for $m\ge 2$.

\item[(d)] There is a Banach space finitely representable in $\XX$ possessing a quasi-greedy basis $\BB$ with 
 $k_m[\BB]\approx \log m$ for $m\ge 2$.

\end{itemize}

\end{theorem}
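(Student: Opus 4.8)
The plan is to prove the chain of implications (a) $\Rightarrow$ (b) $\Rightarrow$ (c) $\Rightarrow$ (d) $\Rightarrow$ (a). The first implication is where the new work lies; the middle ones are essentially applications of the machinery already set up in Section~\ref{PisierXu1987}, and the last one is a soft argument.

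For (a) $\Rightarrow$ (b), the idea is to produce, inside a space finitely representable in $\XX$, a basis of type P and a basis of type P* living in two complementary blocks, and then form the $\diamond$-combination so that Theorem~\ref{BasisInDirectSum} gives $L_m\approx m$. Since $\XX$ is not superreflexive, by definition there is a non-reflexive Banach space $\ZZ$ finitely representable in $\XX$, and by Theorem~\ref{CharacterizationReflexivity} $\ZZ$ contains a basic sequence $\BB_1$ of type P, with all the relevant constants universal. The point is that the subspace $[\BB_1]$ is itself finitely representable in $\XX$ (finite representability passes to subspaces), so without loss of generality we may assume $\XX$ itself carries a type P basic sequence $\BB_1=(\xx_n)$. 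Now one does \emph{not} want to take $\YY = [\BB_1]\oplus[\BB_1']$ for an independent copy, since that need not be finitely representable in $\XX$; instead, following the spirit of Theorem~\ref{TailoringQGB}, take $\BB_2=(\sss_n)=(\sum_{k=1}^n\xx_k)$, which by Lemma~\ref{PProperty}(a) is a type P* basis living in the \emph{same} space $[\BB_1]$. Then $[\BB_1]\oplus[\BB_1]$ is finitely representable in $\XX$ (a finite-dimensional piece of $\XX\oplus\XX$ embeds almost isometrically into $\XX$ itself by a standard gliding/perturbation argument using Lemma~\ref{BasicSequenceLemma}), and $\BB_1\diamond\BB_2$ is a basis of that sum with $L_m\approx m$ by Theorem~\ref{BasisInDirectSum}. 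Thus (b) holds with $\YY=[\BB_1]\oplus[\BB_1]$.

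For (b) $\Rightarrow$ (c): given a space $\YY$ finitely representable in $\XX$ with a basis $\BB$ satisfying $L_m[\BB]\gtrsim m$, examine how it was built — it is (up to the reduction above) of the form $\BB_1\diamond\BB_2$ with $\BB_1$ of type P and $\BB_2$ of type P*. Apply the GOW-method as in Theorem~\ref{TailoringQGB}: $\GOW(\BB_1\diamond\BB_2)$ is a quasi-greedy basis for $\YY\oplus\ell_2$, which is still finitely representable in $\XX$ (as $\ell_2$ is finitely representable in any infinite-dimensional Banach space, by Dvoretzky's theorem, and finite representability is transitive), and by Theorem~\ref{ConditionalityGOW} applied with the doubling function $\phi(t)=t$ we get $L_m[\GOW(\BB_1\diamond\BB_2)]\gtrsim\log m$; the matching upper bound $L_m\le k_m\lesssim\log m$ is Theorem~\ref{LogEstimate}. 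The implication (c) $\Rightarrow$ (d) is immediate once one observes $L_m[\BB]\le k_m[\BB]\lesssim\log m$, so $\log m\lesssim L_m[\BB]\le k_m[\BB]\lesssim\log m$ forces $k_m[\BB]\approx\log m$ too. Finally (d) $\Rightarrow$ (a): if $\XX$ were superreflexive then every $\YY$ finitely representable in $\XX$ is superreflexive as well (finite representability is transitive, so anything finitely representable in $\YY$ is finitely representable in $\XX$, hence reflexive), and Theorem~\ref{LogEstimateSuperR} would force $k_m[\BB]\lesssim(\log m)^\alpha$ for some $\alpha<1$ for every quasi-greedy basis of $\YY$, contradicting $k_m[\BB]\approx\log m$.

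The main obstacle is the precise execution of the reduction in (a) $\Rightarrow$ (b): one must argue cleanly that if $\XX$ contains (after passing to a space finitely representable in it) a type P basic sequence $\BB_1$, then $[\BB_1]\oplus[\BB_1]$ — or whatever two-block space carries $\BB_1\diamond\BB_2$ — is finitely representable in $\XX$. The cleanest route is to absorb the direct sum into the finite-representability bookkeeping from the start: since a non-reflexive space $\ZZ$ is finitely representable in $\XX$, so is $\ZZ\oplus\ZZ\oplus\ell_2$, and inside this one produces $\GOW(\BB_1\diamond\BB_2)$ directly, getting (c) and (b) simultaneously via Theorem~\ref{TailoringQGB}. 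I expect the write-up to proceed in exactly that order — (a) $\Rightarrow$ (c), then note (c) $\Rightarrow$ (d) and (b) is obtained along the way — rather than literally following the cyclic order, precisely to sidestep delicate statements about finite representability of direct sums.
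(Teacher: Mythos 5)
There is a genuine gap, and it occurs at the step you yourself flag as the main obstacle. You assert several times that a direct sum of spaces finitely representable in $\XX$ is again finitely representable in $\XX$: first that $[\BB_1]\oplus[\BB_1]$ embeds ``almost isometrically'' piece by piece into $\XX$, then that ``since $\ZZ$ is finitely representable in $\XX$, so is $\ZZ\oplus\ZZ\oplus\ell_2$,'' and again in (b) $\Rightarrow$ (c) that $\YY\oplus\ell_2$ is finitely representable in $\XX$ ``by transitivity.'' This is false: transitivity says nothing about direct sums, and already $\ell_2\oplus_\infty\ell_2$ is \emph{not} finitely representable in $\ell_2$ (it contains $\ell_\infty^2$ isometrically, while every two-dimensional subspace of $\ell_2$ is Euclidean). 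What Lemma~\ref{BasicSequenceLemma} actually yields is only \emph{crude} finite representability of the sum (with constant roughly $2$), and one must then pass to a suitable renorming — which is finitely representable in $\XX$ and preserves the basis, its quasi-greediness, and the order of $L_m$ and $k_m$ up to constants. The paper invokes exactly this crude-representability-plus-renorming device for $\YY\oplus\ell_2$ in (b) $\Rightarrow$ (c); your argument is repairable the same way, but as written the key embedding claims are wrong.

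More importantly, for (a) $\Rightarrow$ (b) the paper avoids direct sums altogether, and this is the real content of the theorem that your proposal misses. Starting from a single type P basis $(\xx_n)$ of a space $\YY$ finitely representable in $\XX$ (Theorem~\ref{CharacterizationReflexivity}), the paper builds a new basis of the \emph{same} space $\YY$ by working blockwise: in the $j$-th dyadic block the first half of the vectors are kept as $\xx_n$ (a finite type P piece) and the second half are replaced by partial sums $\sum_{k=3\cdot 2^{j-1}-1}^{n}\xx_k$ (a finite type P* piece), and then each vector of the first half is paired with one of the second half and replaced by their sum and difference, exactly as in Lemma~\ref{Twisting} and Theorem~\ref{BasisInDirectSum}. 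The lower bound $L_m\gtrsim m$ is then extracted from the $j$-th block. This intrinsic construction is what makes (b) hold for a space genuinely finitely representable in $\XX$ with no renorming needed; your version, which insists on realizing the type P and type P* sequences in two separate summands, cannot be made to work without the renorming repair above. The remaining implications in your proposal — GOW plus Theorem~\ref{ConditionalityGOW} with $\phi(t)=t$ and the upper bound from Theorem~\ref{LogEstimate} for (b) $\Rightarrow$ (c), the trivial (c) $\Rightarrow$ (d), and Theorem~\ref{LogEstimateSuperR} for (d) $\Rightarrow$ (a) — match the paper and are fine.
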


\begin{proof} 
 
The method for proving (a) $\Rightarrow$ (b) is a refinement of that used for proving Theorem~\ref{BasisInDirectSum}. The underlying idea behind the construction below is to obtain a basis containing arbitrarily large blocks whose first half is a (finite) basic sequence of type P and whose second half is a basic sequence of type P*. Then, we pair every vector in the first half of the block with a vector in the second half of the block and, as in Lemma~\ref{Twisting}, we replace these vectors with their sum and their difference.

By Theorem~\ref{CharacterizationReflexivity} there is  a basis $\BB=(\xx_n)_{n=1}^\infty$ of type P for a Banach space $\YY$ finitely representable in $\XX$. Let $(\xx_n^*)_{n=1}^\infty$ be its biorthogonal sequence and  $(S_m)_{m=1}^\infty$ be  its partial sum projections. Define $\BB_1=(\yy_n)_{n=1}^\infty$ in $\YY$ by
\[
\yy_n=
\begin{cases}
\xx_n & \text{ if } 2^{j}-1 \le n\le 3\cdot2^{j-1}-2 ,\\
\sum_{k= 3\, 2^{j-1}-1}^n \xx_k & \text{ if } 3\cdot 2^{j-1}-1 \le n\le 2^{j+1}-2,\\
\end{cases}
\]
where $j$ is the unique positive integer such that $2^j-1\le n\le 2^{j+1}-2$. Let us prove that $\BB_1$ is a semi-normalized basis for $\YY$. Define $(\yy_n^*)_{n=1}^\infty$ by
\[
\yy_n^*=
\begin{cases}
\xx_n^* & \text{ if } 2^{j}-1 \le n\le 3\cdot 2^{j-1}-2 ,\\
 \xx_n^*- \xx_{n+1}^* & \text{ if } 3\cdot 2^{j-1}-1 \le n\le 2^{j+1}-3,\\
 \xx_n^* & \text{ if } n= 2^{j+1}-2,\\
\end{cases}
\]
where $j$ is as before. We have $\yy_n^*(\yy_k)=\delta_{k,n}$. Moreover,
\[
T_m:=S_m[\BB]=
\begin{cases}
S_m & \text{ if } 2^{j}-1 \le m\le 3\cdot 2^{j-1}-2 ,\\
S_m- \xx_{m+1}^*\otimes \xx_m & \text{ if } 3\,2^{j-1}-1 \le m\le 2^{j+1}-3,\\
 S_m & \text{ if } m= 2^{j+1}-2.\\
\end{cases}
\]
If we put
$b:=\sup_n \Vert \xx_n\Vert$, $d:=\sup_n\Vert \xx_n^*\Vert$ and let $K$ be  the basis constant of $\BB$, we have
\[
\sup_m\Vert T_m\Vert \le K+bd.\] It is clear that $[\yy_n \colon n\in\NN]=[\xx_n\colon n\in\NN]=\YY$. Hence, $\BB_1$ is basis for $\YY$. Finally, if we let $C=\sup_n \Vert \sum_{k=1}^n \xx_k\Vert$ we have  $\Vert \yy_n\Vert\le 2C$ and $ \Vert \yy_n^*\Vert\le 2d$ for all $n\in\NN$. Therefore, $\BB_1$ is semi-normalized.

Given a positive integer $n$ there are unique integers $j=j(n)$ and $k=k(n)$ with $j\ge 1$ and $1\le k \le 2^{j-1}$ such that 
\[
n=\begin{cases} 2^{j}+2k-3& \text{ if $n$ is odd,}\\
 2^{j}+2k-2 & \text{ if $n$ is even.}\\
 \end{cases}
\]
 Define $\BB_2=(\zz_n)_{n=1}^\infty$ in $\YY$ and $(\zz_n^*)_{n=1}^\infty$ in $\YY^*$ by
\begin{align*}
\zz_n&=
\begin{cases}
\yy_{2^j-2+k} +\yy_{3\,2^{j-1}-2+k} & \text{if $n$  is odd,}\\
\yy_{2^j-2+k} -\yy_{3\,2^{j-1}-2+k} & \text{if $n$ is even,}
\end{cases}\\
\zz_n^*&=
\begin{cases}
\frac{1}{2} \left(\yy^*_{2^j-2+k} +\yy^*_{3 \, 2^{j-1}-2+k} \right) & \text{if $n$ is odd,}\\
\frac{1}{2} \left(\yy^*_{2^j-2+k} -\yy^*_{3\, 2^{j-1}-2+k} \right) & \text{if $n$ is  even,}
\end{cases}
\end{align*}
where $j=j(n)$ and $k=k(n)$. We have $\zz_n^*(\zz_k)=\delta_{k,n}$. Moreover, if $j=j(m)$ and $k=k(m)$,
\[
S_m[\BB_2]=R_{j,k}:=T_{2^j+k-2} + T_{3\, 2^{j-1}+k-2}-T_{3\, 2^{j-1}-2}
\] 
for $m$ even, and
\[
S_m[\BB_2]
=R_{j,k-1} + \frac{1}{2} \sum_{\varepsilon,\delta\in\{2,3\}}  \yy_{\varepsilon2^{j-1}+k-2}^*\otimes \yy_{\delta 2^{j-1}+k-2}
\]
for $m$ odd. We infer that 
\[\sup_m \Vert S_m[\BB_2]\Vert \le3 K+3bd+2 d C.\] It is clear that $[\zz_n \colon n\in\NN]= \YY$. Hence, $\BB_2$ is a basis for $\YY$. 

Let $m\in\NN$, $m\ge 2$. Choose $j\in\NN$ such that $2^{j+1}-2\le m \le 2^{j+2}-3$ and
put
\begin{align*}
f&=\sum_{n=2^{j}-1}^{2^{j+1}-2} \zz_n =2 \sum_{n=2^j-1}^{3\, 2^{j-1}-2} \yy_n=2 
\sum_{n=2^j-1}^{3\, 2^{j-1}-2} \xx_n,
\text{ and }\\
g&=\sum_{n=2^{j}-1}^{2^{j+1}-2} (-1)^{n-1}\zz_n =2 \sum_{n=3\, 2^{j-1}-1}^{2^{j+1}-2} \yy_n
=2 \sum_{k=3\, 2^{j-1}-1}^{2^{j+1}-2} (2^{j+1}-k-1) \xx_k.
\end{align*}
We have $\Vert f\Vert\le 4C$ and \[
\frac{m+3}{4} \le 2^{j}=\xx_{3\, 2^{j-1}-1}^*(g)\le d \Vert g\Vert.
\]
Since $(f+g)=2S_A(f)$, where $A=\{ 2^{j}+2k-3 \colon 1\le k\le 2^{j-1}\}$,
\[
L_{m}[\BB_2]\ge \frac{\Vert S_A(f)\Vert }{\Vert f\Vert} \ge \frac{\Vert g \Vert -\Vert f\Vert } {2 \Vert f\Vert}\ge
 \frac{m+3}{32dC}-\frac{1}{2}.
\]
Consequently, $L_m[\BB_2]\approx m$ for $m\in\NN$. 

(b) $\Rightarrow$ (c) Let $\YY$ be a Banach space finitely representable in $\XX$ and let $\BB$ be a basis for $\YY$ with $L_m[\BB]\approx m$ for $m\in\NN$. Applying the GOW-method to $\BB$ yields a quasi-greedy basis $\GOW(\BB)$ for $\YY\oplus \ell_2$ with $L_m[\GOW(\BB)]\approx \log m$ for $m\ge 2$. Using 
Lemma~\ref{BasicSequenceLemma} and Dvoretzky's theorem (see e.g. \cite{AlbiacKalton2016}*{Theorem 13.3.7}) we get that $\YY\oplus\ell_2$ is crudely finitely representable in $\XX$. Thus a suitable renorming of $\YY\oplus\ell_2$ is finitely representable in $\XX$ (see e.g. \cite{AlbiacKalton2016}*{Proposition 12.1.13}).

 (c) $\Rightarrow$ (d)  is obvious (d) $\Rightarrow$ (a) is a consequence of Theorem~\ref{LogEstimateSuperR}.
 \end{proof}

Before going on, for the sake of expositional ease we re-state a result of James adapted to our interests.

\begin{theorem}[see \cite{James1972}*{Theorem 2 and Theorem 3}]\label{JamesTheorem}
 Let $0<a\le b<\infty$ and $K\ge 1$  and  suppose $\XX$ is a superreflexive Banach space. There are $0<C\le D<\infty$ and $1<p\le q<\infty$ such that  for every basic sequence $(\xx_n)_{n=1}^\infty$ in $\XX$  with basis constant at most $K$ and $a \le \Vert \xx_n\Vert \le b$ for all $n\in\NN$
we have
 \begin{equation}\label{JamesBounds}
C\left(\sum_{n=1}^\infty |a_n|^q \right)^{1/q }\le \Vert f \Vert \leq D 
\left(\sum_{n=1}^\infty |a_n|^p \right)^{1/p } 
\end{equation}
 for any $f\in [\xx_n \colon n\in \NN]$.
\end{theorem}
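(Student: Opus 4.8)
The plan is to deduce the statement from James's work on superreflexive spaces with bases \cite{James1972}, the real issue being the \emph{uniformity} of the constants. Write $\ZZ=[\xx_n\colon n\in\NN]$; being a subspace of $\XX$ it is superreflexive, and since subspaces and quotients of superreflexive spaces are superreflexive, $\ZZ^*$ (a quotient of $\XX^*$) is superreflexive as well. First I would reduce to the two one-sided estimates, which are dual to each other. The restriction of the biorthogonal sequence $(\xx_n^*)_{n=1}^\infty$ to $\ZZ$ is a semi-normalized basic sequence in $\ZZ^*$ whose partial-sum projections are the adjoints $S_m^*$, so its basis constant is again at most $K$; and if $(\xx_n^*)$ obeys an upper $\ell_{q'}$-estimate with constant $D'$ (where $q'=q/(q-1)$), then for finitely supported $(a_n)$ one has, pairing $\sum_{n\le m}b_n\xx_n^*$ with $f=\sum_{n\le m}a_n\xx_n$,
\[
\Bigl(\textstyle\sum_{n\le m}|a_n|^q\Bigr)^{1/q}=\sup\Bigl\{\langle\textstyle\sum_n b_n\xx_n^*,f\rangle\colon\bigl(\sum_n|b_n|^{q'}\bigr)^{1/q'}\le1\Bigr\}\le D'\,\Vert f\Vert,
\]
which (using the basis constant to pass to general $f$) gives a lower $\ell_q$-estimate for $(\xx_n)$ with constant controlled by $D'$ and $K$. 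Hence it suffices to prove: \emph{in any superreflexive space $W$, every semi-normalized basic sequence with basis constant at most $K$ satisfies an upper $\ell_p$-estimate, with $p$ depending only on $W$ and the constant depending in addition on $K$, $a$, $b$}; one then applies this to $W=\XX$ and to $W=\XX^*$, noting that a single Pisier renorming of $\XX$ imposes a common power type on all of its subspaces and quotients, so $p$ and $q$ end up depending only on $\XX$.

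For this one-sided estimate, recall that by Enflo's theorem a superreflexive space is uniformly convexifiable and, its dual being superreflexive too, uniformly smoothable; so by Pisier's power-type renorming theorem $W$ admits an equivalent norm $\Vert\cdot\Vert$ whose modulus of smoothness is of power type $p\in(1,2]$ and whose modulus of convexity is of power type $q\in[2,\infty)$, with $p,q$ and the constants $S,\sigma$ depending only on $W$; passing to this norm changes $a,b,K$ by a fixed factor. Assume first the basis is monotone for $\Vert\cdot\Vert$. Applying the $p$-smoothness inequality $\tfrac12(\Vert u+v\Vert^p+\Vert u-v\Vert^p)\le\Vert u\Vert^p+S\Vert v\Vert^p$ with $u=S_{m-1}(f)$ and $v=a_m\xx_m$, and using $\Vert S_{m-1}(f)-a_m\xx_m\Vert\ge\Vert S_{m-1}(f)\Vert$ (apply the norm-one projection $S_{m-1}$), one obtains $\Vert S_m(f)\Vert^p\le\Vert S_{m-1}(f)\Vert^p+2S\Vert a_m\xx_m\Vert^p$, and summing telescopes to $\Vert f\Vert^p\le2S\sum_m\Vert a_m\xx_m\Vert^p\le2Sb^p\sum_m|a_m|^p$. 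Dually, applying the $q$-convexity inequality $\Vert\tfrac{u+v}{2}\Vert^q+\sigma\Vert\tfrac{u-v}{2}\Vert^q\le\tfrac12(\Vert u\Vert^q+\Vert v\Vert^q)$ to $u=S_m(f)$, $v=S_{m-1}(f)$ and using $\Vert S_{m-1}(f)+\tfrac12a_m\xx_m\Vert\ge\Vert S_{m-1}(f)\Vert$ gives $\tfrac{\sigma}{2^q}\Vert a_m\xx_m\Vert^q\le\tfrac12\bigl(\Vert S_m(f)\Vert^q-\Vert S_{m-1}(f)\Vert^q\bigr)$, hence $\Vert f\Vert\ge a(2\sigma/2^{q})^{1/q}\bigl(\sum_m|a_m|^q\bigr)^{1/q}$.

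The remaining, and genuinely delicate, point is to drop the monotonicity hypothesis while keeping the constant dependent only on $K$. This cannot be done by renorming $\ZZ$ to make the basis monotone, since that renorming destroys the power-type smoothness and convexity just produced; the two properties cannot be arranged simultaneously by renorming. Instead one invokes James's local characterization of superreflexivity: $\XX$ is superreflexive if and only if there are $\delta\in(0,1)$ and $N\in\NN$, depending only on $\XX$, such that no $N$ vectors in the unit ball of any space finitely representable in $\XX$ have all their ``$\pm$-interval sums'' of norm at least $(1-\delta)N$, i.e.\ there are no long uniformly $\ell_1^N$-like configurations. A semi-normalized basic sequence with basis constant at most $K$ that violated the desired estimate badly enough would, after a blocking and pigeonhole argument, produce such configurations of unbounded length, hence a bounded sequence with no weakly convergent subsequence in a space finitely representable in $\XX$, contradicting superreflexivity via Theorem~\ref{CharacterizationReflexivity}. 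Tracking all constants through this combinatorial reduction in terms of $\delta$, $N$, $K$, $a$, $b$ is precisely what is done in \cite{James1972}*{Theorems 2 and 3} (see also \cite{Pisier2016}); the renorming computation above is merely the transparent special case. I expect this last uniformity step — which is the ``super'' content of the statement — to be the main obstacle.
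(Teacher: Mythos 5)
The first thing to say is that the paper does not prove this statement at all: it is imported verbatim from James (\cite{James1972}, Theorems~2 and~3) and used as a black box, so there is no in-paper argument to compare yours against. Measured as a standalone proof, your proposal is sound in the parts it actually carries out but incomplete at exactly the point you yourself flag. The duality reduction is correct (the restrictions of the biorthogonal functionals to $[\xx_n\colon n\in\NN]$ do form a basic sequence with basis constant at most $K$ and semi-normalization controlled by $a,b,K$, and an upper $\ell_{q'}$-estimate for them dualizes to a lower $\ell_q$-estimate for $(\xx_n)$), and the telescoping computations with the $p$-smoothness and $q$-convexity inequalities are correct \emph{for a monotone basis}. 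But the theorem's entire content is the case of basis constant $K>1$ with constants uniform over all such basic sequences: your inequality $\Vert S_{m-1}(f)-a_m\xx_m\Vert\ge\Vert S_{m-1}(f)\Vert$ degrades to $\ge\Vert S_{m-1}(f)\Vert/K$, which kills the telescoping, and your closing paragraph replaces the missing argument with a gesture toward a ``blocking and pigeonhole argument'' that is never performed --- it is a citation of James, not a proof. You are right that one cannot renorm to make the basis monotone without losing the power-type modulus, which is precisely why this step is genuinely hard and why James's combinatorial/finite-representability argument is needed.

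Two smaller points. First, you tacitly use a single equivalent norm that is simultaneously $p$-smooth and $q$-convex; Pisier's theorem as usually stated produces the two renormings separately, and even granting a simultaneous one, ``monotone'' is a norm-dependent hypothesis, so the basis can be assumed monotone for at most one of the norms in play. Second, the theorem as stated in the paper allows $p$, $q$, $C$, $D$ to depend on $a$, $b$, $K$ as well as on $\XX$, so your insistence that $p$ and $q$ depend only on $\XX$ is stronger than required (though it does follow from the renorming route). Net: your write-up is a useful and essentially correct expansion of the easy half, but as a proof of the stated theorem it bottoms out in the same external citation the paper itself relies on, so the hard uniformity step remains unproved.
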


Notice that  if  a basis $\BB= (\xx_n)_{n=1}^\infty$ verifies \eqref{JamesBounds} then it is verified that
\[
Cm^{1/q} \textstyle \le \Vert \sum_{n=1}^m \xx_n\Vert \le D m^{1/p},\quad \forall m\in\NN
\]
 \text{ and } 
 \[
k_m [\BB] \le \frac{D}{C} m^{1/p-1/q},\quad \forall m\in\NN.
\]

{Theorem~\ref{CharacterizationSuperR3} is just a part  of the following.

\begin{theorem}\label{MainTheorem2} Let $\XX$ be a Banach space. The following are equivalent:

\begin{itemize}

\item[(a)] $\XX$ is non-superreflexive.

\item[(b)] There is a basic sequence $\BB$ in $\XX$ with 
 $
 L_m[\BB]\approx m \text{ for }m\in\NN.
 $

\item[(c)] There is a basic sequence $\BB$ in $\XX$ with 
 $
 k_m[\BB]\approx m \text{ for }m\in\NN.
 $
\end{itemize}
\end{theorem}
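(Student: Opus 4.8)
The plan is to prove the cycle (a) $\Rightarrow$ (b) $\Rightarrow$ (c) $\Rightarrow$ (a); the last two arrows are short and essentially all the substance is in (a) $\Rightarrow$ (b). For \emph{(b) $\Rightarrow$ (c)}: since $L_m[\BB]\le k_m[\BB]$ for every basis and, as recalled in the Introduction, $k_m[\BB]\lesssim m$ always, a basic sequence with $L_m[\BB]\approx m$ satisfies $m\lesssim L_m[\BB]\le k_m[\BB]\lesssim m$, i.e.\ $k_m[\BB]\approx m$. For \emph{(c) $\Rightarrow$ (a)} I argue by contraposition: assume $\XX$ is superreflexive and let $\BB=(\xx_n)_{n=1}^\infty$ be a basic sequence in $\XX$; since rescaling $\xx_n\mapsto \xx_n/\Vert\xx_n\Vert$ (and correspondingly the $\xx_n^*$) leaves every coordinate projection $S_A$, hence $k_m[\BB]$, unchanged, we may assume $\BB$ normalized. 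Then Theorem~\ref{JamesTheorem}, applied with $a=b=1$ and $K$ the basis constant of $\BB$, produces $1<p\le q<\infty$ and $0<C\le D<\infty$ with $k_m[\BB]\le (D/C)\,m^{1/p-1/q}$ for all $m$; as $1/p-1/q<1$, this is incompatible with $k_m[\BB]\gtrsim m$, so no basic sequence in a superreflexive space has $k_m[\BB]\approx m$, which is exactly the contrapositive of (c) $\Rightarrow$ (a).

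For the core implication \emph{(a) $\Rightarrow$ (b)} I would reproduce, inside $\XX$ itself, the block construction used for (a) $\Rightarrow$ (b) in Theorem~\ref{MainTheorem}: a basic sequence assembled from long blocks, each block being the twist (in the sense of Lemma~\ref{Twisting}) of a finite basic sequence whose first half is of type P and whose second half is the sequence of its partial sums, which is of type P* by Lemma~\ref{PProperty}(a). The only genuinely new point compared with Theorem~\ref{MainTheorem} is that a merely non-superreflexive $\XX$ need not contain an \emph{infinite} type-P basic sequence, so the blocks must be finite and then glued. For this I would use three ingredients: (1) a finite-codimensional subspace $\YY$ of a non-superreflexive space is again non-superreflexive (else $\XX$, a finite-dimensional extension of $\YY$, would be superreflexive); (2) if $\YY$ is non-superreflexive then some Banach space is finitely representable in $\YY$ and non-reflexive, hence contains a type-P basic sequence with \emph{universal} constants by Theorem~\ref{CharacterizationReflexivity}, so by truncating and transporting finite-dimensional pieces through finite representability, for every $N\in\NN$ the space $\YY$ contains a type-P basic sequence $(e_i)_{i=1}^N$ whose basis, semi-normalization and type-P constants are bounded by absolute constants independent of $\YY$ and $N$; and (3) Lemma~\ref{BasicSequenceLemma}, to place each new block almost disjointly from its predecessors.

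Concretely I would fix $N_\ell=2^\ell$ and build $\BB$ block by block. Given blocks $1,\dots,\ell-1$, spanning a finite-dimensional subspace, let $\YY_{\ell-1}$ be the intersection of the finite-codimensional subspaces attached to these spans by Lemma~\ref{BasicSequenceLemma} with a summable sequence of errors $(\varepsilon_j)$; by (1) it is non-superreflexive, so by (2) it contains a type-P basic sequence $(e_i)_{i=1}^{4N_\ell}$ with absolute constants. Put $\sigma_i=\sum_{k=2N_\ell+1}^{2N_\ell+i}e_k$ for $1\le i\le 2N_\ell$ (a type-P* sequence) and let the $\ell$-th block of $\BB$ be $(\zz_i)_{i=1}^{4N_\ell}$ with $\zz_{2i-1}=e_i+\sigma_i$, $\zz_{2i}=e_i-\sigma_i$. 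By the computations in Lemma~\ref{Twisting} and in the proof of Theorem~\ref{MainTheorem} this block is a semi-normalized basic sequence whose basis constant is bounded by an absolute $K_0$, and for $f=\sum_{i=1}^{2N_\ell}(\zz_{2i-1}+\zz_{2i})=2\sum_{i=1}^{2N_\ell}e_i$ and $g=\sum_{i=1}^{2N_\ell}(\zz_{2i-1}-\zz_{2i})=2\sum_{i=1}^{2N_\ell}\sigma_i$ one has $f+g=2S_A(f)$ with $A=\{2i-1:1\le i\le 2N_\ell\}$, while type P gives $\Vert f\Vert\lesssim 1$ and expanding $g$ in the $e_k$ — its $e_{2N_\ell+1}$-coordinate equals $4N_\ell$, detected by a Hahn--Banach extension of the corresponding biorthogonal functional, of absolutely bounded norm — gives $\Vert g\Vert\gtrsim N_\ell$, whence $\Vert S_A(f)\Vert/\Vert f\Vert\gtrsim N_\ell$ with absolute implied constants. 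The standard gluing argument then makes $\BB$ a semi-normalized basic sequence in $\XX$ with basis constant at most $K_0\prod_j(1+\varepsilon_j)<\infty$, so $L_m[\BB]\le k_m[\BB]\lesssim m$; and since a vector supported inside block $\ell$ is expanded by $\BB$ exactly as within that block (uniqueness of coefficients), the estimate above gives $L_{p_\ell}[\BB]\gtrsim N_\ell$, where $p_\ell\approx 2^\ell$ is the position where block $\ell$ ends. As $m\mapsto L_m[\BB]$ is non-decreasing and consecutive $p_\ell$'s have bounded ratio, this forces $L_m[\BB]\gtrsim m$ for all $m\ge 2$; combined with the upper bound, $L_m[\BB]\approx m$, proving (b) (and, en route, (c)).

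I expect the main obstacle to be precisely the production of the finite type-P pieces \emph{inside the prescribed finite-codimensional subspaces} of $\XX$: this is where the finite-dimensional substance of James's theory must enter, here repackaged through Theorem~\ref{CharacterizationReflexivity}, finite representability, and the elementary stability of non-superreflexivity under finite-codimensional subspaces. Everything else — the twist identity and the gluing bookkeeping for the basis constant — is a routine adaptation of material already in the paper, and the within-block conditionality survives the gluing for the trivial reason that $L_m[\BB]$ is witnessed by a single test vector, kept supported inside one block, where the relevant norm is just the ambient norm of $\XX$.
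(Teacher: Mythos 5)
Your proposal is correct and follows essentially the same route as the paper: (c)\,$\Rightarrow$\,(a) via Theorem~\ref{JamesTheorem}, (b)\,$\Rightarrow$\,(c) trivially, and for (a)\,$\Rightarrow$\,(b) a gluing of finite blocks placed in successive finite-codimensional (hence still non-superreflexive) subspaces via Lemma~\ref{BasicSequenceLemma}, each block being the type-P/type-P* twist with universal constants coming from Theorem~\ref{CharacterizationReflexivity} and finite representability. The only cosmetic difference is that the paper transports already-twisted finite chunks of the bases built in Theorem~\ref{MainTheorem} into $\XX$, whereas you transport the finite type-P pieces first and perform the twist inside $\XX$; the estimates are identical.
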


 \begin{proof}
 Before proving  (a) $\Rightarrow$ (b) we realize that  a careful reading of the proof of Theorem~\ref{MainTheorem} reveals that the constants related to the bases obtained there are independent of  the particular Banach space $\XX$ we are dealing with. In particular, focussing on item (b), we claim that there are universal constants $C$ and $K$  so that for any non-superreflexive Banach space $\XX$ there is a basis $\BB$ with basis constant at most $K$ for a Banach space finitely representable in $\XX$ such that $L_m[\BB]\ge Cm$ for all $m\in\NN$. 
 
Pick a sequence $(\varepsilon_j)_{j=1}^\infty$  with  $0<\varepsilon_j<1$ and $\lambda:=\prod_{j=1}^\infty (1+\varepsilon_j)<\infty$. We claim that there exist
 $(\BB_j,\XX_j)_{j=1}^\infty$ such that 
 \begin{itemize}
 \item[(i)] $\BB_j=(\yy_{j,n})_{n=1}^{2^{j-1}}$ is a finite basic sequence in $\XX$  with basis constant at most $2K$, 
  \item[(ii)] for each $j\in\NN$ there exist a  vector  $f_j\in[\yy_{j,n} \colon 1\le n \le  2^{j-1}]$ and a set $A_j\subseteq\{1,\dots, 
 2^{j-1} \}$ so that 
  $
  \Vert S_{A_j}(f_j)\Vert  >  C2^{j-2}\Vert f_j\Vert.
  $
  \item[(iii)]  $\XX_j$ is a non-superreflexive subspace of $\XX$, and
  \item[(iv)] if we put $\YY_j=[\yy_{k,n} \colon 1\le k \le j, 1\le n \le 2^{k-1}]$, we have 
  \[
  \Vert f \Vert \le (1+\varepsilon_j) \Vert f+g \Vert \text{ for all } f\in\YY_j \text{ and }g\in\XX_j.
  \]
  \end{itemize}
To see this we proceed recursively.  Put   $\YY_0=\{0\}$, $\XX_0=\XX$, and for $j\in \NN$ assume that $(\BB_{k},\XX_{k})$ has been constructed for $1\le k \le j-1$ (nothing is constructed in the case $j=0$).  There is a basis $\BB_{j}^{\prime}=(\zz_n)_{n=1}^\infty$ with basis constant at most $K$  for a Banach space finitely representable in $\XX_{j-1}$ and  with $L_m[\BB_{j}^{\prime}]\ge Cm$ for all $m$. Consequently, there are $g_j\in[ \zz_n \colon 1\le n \le 2^{j-1}]$  and  $A_j\subseteq \{1,\dots,2^{j-1}\}$ with 
\[
 \Vert S_{A_j}(g_j)\Vert > \frac{1+\varepsilon_j}{2} C 2^{j-1} \Vert g_j\Vert.
 \]
There is an isomorphic embedding $T_j\colon[\zz_n \colon 1\le n\le 2^{j-1}]\to \XX$ such that $\Vert T_j\Vert \Vert T_j^{-1}\Vert \le 1+\varepsilon_j$.
The sequence $(\yy_n)_{n=1}^{2^{j-1}}$, where  $\yy_n=T_j(\zz_{n})$ has properties (i) and (ii). By Lemma~\ref{BasicSequenceLemma}, there is a finite co-dimensional subspace $\XX_j$ of $\XX$ verifying (iv). Since $\XX$ is not superreflexive, neither is $\XX_j$.

Next, let $\BB=(\xx_n)_{n=1}^\infty$ be the sequence obtained by putting in a row the bases $(\BB_j)_{j=1}^\infty$. Let us show that $\BB$ is a basic sequence. Notice that the length of the first $j$ blocks that form $\BB$ is 
$\sum_{k=1}^{j} 2^{k-1}=2^j-1$.
Let  $(a_n)_{n=1}^\infty$ be a sequence of scalars eventually zero and fix $m\in\NN$. Pick  $j\in\NN$ and $0\le k\le 2^{j-1}-1$   determined by the condition  $m= 2^{j-1} + k$.  We have (with the convention that $\sum_1^0 a_{n} x_n=0$)
\begin{align*}
\left\Vert \sum_{n=1}^m a_n \xx_n\right\Vert&\le  \left\Vert \sum_{n=1}^{2^{j-1}-1} a_n \xx_n\right\Vert+  \left\Vert\sum_{n=2^{j-1}}^{2^{j-1}+k} a_n \xx_n\right\Vert\\
&\le  \left\Vert \sum_{n=1}^{2^{j-1}-1} a_n \xx_n\right\Vert+ 2K  \left\Vert\sum_{n=2^{j-1}}^{2^j-1} a_n \xx_n\right\Vert\\
&\le  (1+2K)  \left\Vert \sum_{n=1}^{2^{j-1}-1} a_n \xx_n\right\Vert+ 2K  \left\Vert\sum_{n=1}^{2^j-1} a_n \xx_n\right\Vert\\
&\le  \lambda(1+4K)  \left\Vert \sum_{n=1}^{\infty} a_n \xx_n\right\Vert.
\end{align*}
Hence $\BB$ is a basic sequence with basis constant  at most $\lambda(1+4K)$. Let us estimate its conditionality constants. Let $m\in\NN$ and choose $j\in\NN$ such  
$2^j- 1 \le m \le 2^{j+1}-2$. We have 
\[
L_m[\BB] \ge  \frac{ \Vert S_{A_j}(f_j)\Vert }{\Vert f_j\Vert}\ge C2^{j-2}\ge \frac{C}{8}(m+2).
\]

(b) $\Rightarrow$ (c) is obvious, and (c) $\Rightarrow$ (a)  is an immediate consequence of Theorem~\ref{JamesTheorem}. 
 \end{proof}
 
Our next corollary is a  
 consequence of Theorem~\ref{MainTheorem} and Theorem~\ref{MainTheorem2}.

\begin{corollary}\label{MainTheorem3} Let $\XX$ be a Banach space. The following are equivalent:

\begin{itemize}

\item[(a)] $\XX$ is superreflexive.

\item[(b)] For  every semi-normalized  basic sequence $(\xx_n)_{n=1}^\infty$ in $\XX$ there are $0<C<\infty$ and $0<\alpha<1$ such that
\[
\Big\Vert \sum_{n=1}^m \xx_n\Big\Vert \le C m^\alpha, \quad m\in\NN.
\]

\item[(c)] For  every semi-normalized  basic sequence $(\xx_n)_{n=1}^\infty$ in $\XX$  there are $0<C<\infty$ and $0<\alpha<1$ such that

\[
 \Big\Vert \sum_{n=1}^m \xx_n\Big\Vert \ge C m^\alpha, \quad m\in\NN.
\]

\item[(d)] For  every semi-normalized  basic sequence $\BB$ in $\XX$  there are $0<C<\infty$ and $0<\alpha<1$ such that
\[
\textstyle k_m[\BB]  \le C  m^\alpha, \quad m\in\NN.
\]

\item[(e)]For every quasi-greedy basis $\BB$ of any Banach space finitely representable in $\XX$ there are $0<C<\infty$ and $0<\alpha<1$ such that
\[
\textstyle{k_m[\BB]\le C  (\log m)^\alpha,\quad m\ge 2}.
\]

\end{itemize}

\end{corollary}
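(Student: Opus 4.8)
The plan is to deduce (b), (c), (d) from (a) via James's dichotomy, to deduce (e) from (a) via Theorem~\ref{LogEstimateSuperR}, and to obtain the four converse implications by contraposition, using Theorem~\ref{MainTheorem}, Theorem~\ref{MainTheorem2} and the block‑gluing technique behind their proofs.

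First assume $\XX$ is superreflexive. Given a semi-normalized basic sequence $(\xx_n)_{n=1}^\infty$ in $\XX$ with basis constant $K$ and $a\le\|\xx_n\|\le b$, Theorem~\ref{JamesTheorem} provides $1<p\le q<\infty$ and $0<C\le D$ for which \eqref{JamesBounds} holds, and the two estimates displayed immediately after Theorem~\ref{JamesTheorem} give $\|\sum_{n=1}^m\xx_n\|\le Dm^{1/p}$, $\|\sum_{n=1}^m\xx_n\|\ge Cm^{1/q}$ and $k_m[\BB]\le(D/C)m^{1/p-1/q}$. These establish (b) with $\alpha=1/p$, (c) with $\alpha=1/q$, and (d) with $\alpha=\max\{1/2,\,1/p-1/q\}\in(0,1)$. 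For (e) it suffices to recall that finite representability is transitive, so every Banach space finitely representable in the superreflexive space $\XX$ is itself superreflexive; hence Theorem~\ref{LogEstimateSuperR} applies to each of its quasi-greedy bases.

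For the converses, assume $\XX$ is \emph{not} superreflexive. Then (d) fails: Theorem~\ref{MainTheorem2} supplies a basic sequence $\BB$ in $\XX$ with $k_m[\BB]\approx m$, and the sequence built in its proof is semi-normalized (its blocks are isomorphic copies, with uniformly bounded distortion, of semi-normalized bases), so $k_m[\BB]\approx m$ is not $\lesssim m^\alpha$ for any $\alpha<1$. Likewise (e) fails: Theorem~\ref{MainTheorem} supplies a quasi-greedy basis $\BB$ of a Banach space finitely representable in $\XX$ with $k_m[\BB]\approx\log m$, which is not $\lesssim(\log m)^\alpha$ for any $\alpha<1$.

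The remaining task, where the real work lies, is to make (b) and (c) fail. I would mimic the recursive construction in the proof of Theorem~\ref{MainTheorem2}, feeding it finite type-P* pieces for (b) and finite type-P pieces for (c). Build a decreasing chain $\XX=\XX_0\supseteq\XX_1\supseteq\cdots$ of finite-codimensional, hence still non-superreflexive, subspaces together with finite blocks $B_j$ of length $2^{j-1}$: given $\XX_{j-1}$ and $\YY_{j-1}=[\,B_1,\dots,B_{j-1}\,]$, apply Theorem~\ref{CharacterizationReflexivity} to a non-reflexive space finitely representable in $\XX_{j-1}$, together with Lemma~\ref{PProperty}, to obtain a type-P basis and hence a type-P* basis $(\sss^{(j)}_n)_n$ with $\|\sum_{n=1}^p\sss^{(j)}_n\|\gtrsim p$; truncate to the first $2^{j-1}$ terms, transplant this finite-dimensional piece into $\XX_{j-1}$ with distortion at most $1+\varepsilon_j$ (where $\prod_{j\ge1}(1+\varepsilon_j)<\infty$) to form $B_j\subseteq\XX_{j-1}$; then choose, by Lemma~\ref{BasicSequenceLemma}, a finite-codimensional $\XX_j\subseteq\XX_{j-1}$ with $\|f\|\le(1+\varepsilon_j)\|f+g\|$ for all $f\in\YY_j$, $g\in\XX_j$. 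Exactly as in the proof of Theorem~\ref{MainTheorem2}, the concatenation $\BB=(\xx_n)_{n=1}^\infty$ of the $B_j$ is a semi-normalized basic sequence in $\XX$. Since the $j$-th block lies in $\XX_{j-1}$ while the sum of the earlier blocks lies in $\YY_{j-1}$, Lemma~\ref{BasicSequenceLemma} bounds the projection onto $\XX_{j-1}$ along $\YY_{j-1}$ by $2+\varepsilon_{j-1}$, so with $m_j=\max B_j$ one gets
\[
\Big\|\sum_{n=1}^{m_j}\xx_n\Big\|\ \ge\ \frac{1}{2+\varepsilon_{j-1}}\,\Big\|\sum_{n\in B_j}\xx_n\Big\|\ \gtrsim\ 2^{j-1}\ \approx\ m_j,
\]
whence $\|\sum_{n=1}^m\xx_n\|$ is not $O(m^\alpha)$ for any $\alpha<1$ and (b) fails. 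For (c), run the same construction with $B_j$ a finite type-P piece instead (a truncation of the type-P basis above, or the difference sequence of the type-P* piece), so that every initial-segment sum inside a block has norm at most a constant $C_0$ independent of $j$; then for each $m$ the vector $\sum_{n=1}^m\xx_n$ is a sum of at most $j\approx\log m$ full block-sums plus one partial block-sum, so $\|\sum_{n=1}^m\xx_n\|\lesssim\log m$ by the triangle inequality, which is not $\Omega(m^\alpha)$ for any $\alpha>0$, and (c) fails. The main obstacle here is not a new idea but the bookkeeping: one must check that the concatenated sequence is basic with a basis constant independent of $\XX$ and that the type-P / type-P* constants of the finite blocks stay uniform in $j$ — and this is precisely what the recursive construction in the proof of Theorem~\ref{MainTheorem2} already provides, so it can be reused verbatim.
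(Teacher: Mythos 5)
Your proposal is correct and follows essentially the same route as the paper: James's theorem gives (a)$\Rightarrow$(b),(c),(d), Theorem~\ref{LogEstimateSuperR} gives (a)$\Rightarrow$(e), Theorems~\ref{MainTheorem} and~\ref{MainTheorem2} give the contrapositives of (e)$\Rightarrow$(a) and (d)$\Rightarrow$(a), and the negations of (b) and (c) are produced by the same recursive gluing of uniformly controlled finite type-P*/type-P pieces used in the proof of Theorem~\ref{MainTheorem2}. The only (harmless) cosmetic difference is that you extract the block sum via Lemma~\ref{BasicSequenceLemma} where the paper uses the basis constant of the concatenated sequence.
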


\begin{proof}(a) $\Rightarrow$ (b), (a) $\Rightarrow$ (c) and (a) $\Rightarrow$ (d) follow from Theorem~\ref{JamesTheorem}, while
(a) $\Rightarrow$ (e)  follows from Theorem~\ref{LogEstimateSuperR}.
Clearly,  a basis as in (e) does not verify the condition (d) in Theorem~\ref{MainTheorem}. Hence the implication (e) $\Rightarrow$ (a) holds. In turn,  (d) $\Rightarrow$ (a) is a consequence of  Theorem~\ref{MainTheorem2}. Let us prove (b) $\Rightarrow$ (a) (the proof of  (c) $\Rightarrow$ (a) is similar). Notice that by Theorem~\ref{CharacterizationReflexivity} and  Lemma~\ref{PProperty}(c), there are universal constants $b$, $C$, $K\in(0,\infty)$ such that for any
non-superreflexive Banach space there is a basis $(\xx_n)_{n=1}^\infty$ with basis constant at most $K$ for a Banach space finitely representable in $\XX$ with  $\Vert \xx_n\Vert \le b$ for all $n\in\NN$ and
$|\sum_{n=1}^\infty a_n|\le C \Vert \sum_{n=1}^\infty \xx_n\Vert$ for any sequence of scalars $(a_n)_{n=1}^\infty$ eventually zero.

 Assume that $\XX$ is not superreflexive. Pick a sequence $(\varepsilon_j)_{j=1}^\infty$  with  $0<\varepsilon_j<1$ and $\lambda:=\prod_{j=1}^\infty (1+\varepsilon_j)<\infty$. As in the proof of Theorem~\ref{MainTheorem2} we recursively construct a sequence $(\BB_j,\XX_j)_{j=1}^\infty$ such that
 \begin{itemize}
 \item[(i)] $\BB_j=(\yy_{j,n})_{n=1}^{2^{j-1}}$ is a finite basic sequence in $\XX$  with basis constant at most $2K$,
  \item[(ii)]  $\Vert \yy_{j,n}\Vert\le 2b$ for $1\le n \le 2^{j-1}$,
  \item[(iii)] $ |\sum_{n=1}^{2^{j-1}} a_n|\le 2 C\Vert \sum_{n=1}^{2^{j-1}} \yy_{j,n}\Vert$ for any $2^{j-1}$-tuple
  $(a_n)_{n=1}^{2^{j-1}}$,
  \item[(iv)]  $\XX_j$ is a non-superreflexive subspace of $\XX$,  and
  \item[(v)] by letting $\YY_j=[\yy_{k,n} \colon 1\le k \le j, 1\le n \le 2^{k-1}]$ we have
  \[
  \Vert f \Vert \le (1+\varepsilon_j) \Vert f+g \Vert \text{ for all } f\in\YY_j \text{ and }g\in\XX_j.
  \]
  \end{itemize}
 Putting in a row the bases $(\BB_j)_{j=1}^\infty$ yields a   basic sequence $\BB=(\xx_n)_{n=1}^\infty$ with
 basis constant at most $\lambda(1+4K)$ verifying $(2C)^{-1}\le\Vert \xx_n\Vert\le2b$ for all $n\in\NN$.
 Moreover, for any $j\in\NN$,
\[
 2^{j}  \le  4C \left\Vert\sum_{n=2^{j-1}}^{2^j-1}  \xx_n\right\Vert  \le
8 \lambda C (1+4K) \left\Vert\sum_{n=1}^{2^j}  \xx_n\right\Vert.
\]
Therefore, (b) does not hold.
\end{proof}

\begin{remark} Let $\BB=(\xx_n)_{n=1}^\infty$ be a basic sequence with basis constant at most $K$ in a Banach space $\XX$ with $0<a\le \Vert \xx_n\Vert \le b<\infty$ for all $n\in\NN$. Then the constants $C$ and $\alpha$ in items (b), (c), (d) of Corollary~\ref{MainTheorem3} depend only on $a$, $b$, $K$ and the particular superreflexive Banach space
$\XX$. Similarly, since a quantitative version of Theorem~\ref{LogEstimateSuperR} is valid, for a quasi-greedy basis
 $\BB=(\xx_n)_{n=1}^\infty$  with quasi-greedy constant at most $\Gamma$ and  $0<a\le \Vert \xx_n\Vert \le b<\infty$ for all $n\in\NN$,  the constants  $C$ and $\alpha$ in item (e) depend only on $a$, $b$, $\Gamma$ and the particular  superreflexive Banach space we are dealing with. \end{remark}\begin{remark}Since the quasi-greedy basic sequence problem remains unsolved, it seems hopeless to characterize a Banach space $\XX$ being superreflexive in terms of the behavior of basic sequences in $\XX$. The situation is quite different if we consider ``finite'' basic sequences. Note that the greedy algorithm $(\GG_m)_{m=1}^\eta$ of a basic sequence $(\xx_n)_{n=1}^\eta$ can be defined even when $\eta$ is a natural number.
\end{remark}

We close with the aforementioned characterization of superreflexivity, which is now obvious with hindsight.

\begin{theorem}Let $\XX$ be a Banach space.
\begin{itemize}
\item[(a)]  $\XX$ is superreflexive if and only if for any $0<a\le b<\infty$ and any $K\in(0,\infty)$ there are $C\in(0,\infty)$
and $0<\alpha<1$ such that for any integer $\eta\ge 2$ and any basic sequence $\BB=(\xx_n)_{n=1}^\eta$ in $\XX$ with
quasi-greedy constant at most $K$ and
 $a\le\Vert \xx_n\Vert \le b$ for all $1\le n\le \eta$ we have $k_m[\BB]\le C (\log m)^\alpha$ for $2\le m\le \eta$.

 \item[(b)] $\XX$ is non-superreflexive if and only if there is $C\in(0,\infty)$ and a sequence $(\BB_j)_{j=1}^\infty$ of uniformly semi-normalized  finite basic sequences in $\XX$ with uniformly bounded quasi-greedy constants such that $\lim_j |\BB_j|=\infty$ and $k_m[\BB_j]\ge C \log m$ for any $j\in\NN$ and all $1\le m \le |\BB_j|$.

 \end{itemize}
 \end{theorem}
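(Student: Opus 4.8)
The plan is to deduce both biconditionals from material already in hand: Theorem~\ref{MainTheorem}, the quantitative form of Theorem~\ref{LogEstimateSuperR}, and two elementary devices for passing between finite and infinite basic sequences. I would isolate the two substantive implications — that superreflexivity of $\XX$ forces the uniform finite estimate of (a), and that failure of superreflexivity produces the family of finite basic sequences in (b) — and observe that the two remaining implications are their respective contrapositives: since $C\log m$ eventually dominates $C'(\log m)^{\alpha}$ for $\alpha<1$ and $|\BB_j|\to\infty$, a bound valid for all $2\le m\le|\BB_j|$ cannot coexist with a lower bound of order $\log m$.

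For (b) when $\XX$ is non-superreflexive, I would invoke Theorem~\ref{MainTheorem} to obtain a quasi-greedy basis $\BB=(\xx_n)_{n=1}^{\infty}$ of a Banach space $\YY$ finitely representable in $\XX$ with $L_m[\BB]\approx\log m$. The decisive point is that $L_m$, unlike $k_m$, is \emph{localised}: because the supremum defining $L_m[\BB]$ ranges over vectors supported in $\{1,\dots,m\}$, one has $L_m[(\xx_n)_{n=1}^{N}]=L_m[\BB]$ for all $N\ge m$, and a finite truncation of a quasi-greedy basis is again quasi-greedy with no larger quasi-greedy constant and the same semi-normalisation. Hence $k_m[(\xx_n)_{n=1}^{N}]\ge L_m[(\xx_n)_{n=1}^{N}]=L_m[\BB]\gtrsim\log m$ for $2\le m\le N$. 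It then remains to move these truncations into $\XX$ itself: finite representability yields an isomorphism $u_N$ of $[\xx_1,\dots,\xx_N]$ onto a subspace of $\XX$ with $\|u_N\|\,\|u_N^{-1}\|\le 2$, and $\BB_N:=(u_N\xx_n)_{n=1}^{N}$ has semi-normalisation constants, quasi-greedy constant, and conditionality constants all within absolute multiplicative factors of those of $(\xx_n)_{n=1}^{N}$. Taking $\BB_j:=\BB_{N_j}$ with $N_j\to\infty$ produces the required sequence.

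For (a) when $\XX$ is superreflexive, given a finite quasi-greedy basic sequence $\BB=(\xx_n)_{n=1}^{\eta}$ in $\XX$ with quasi-greedy constant at most $K$ and $a\le\|\xx_n\|\le b$, I would pass to the infinite setting by forming, inside the superreflexive space $\XX\oplus_2\ell_2$, the sequence $\tilde\BB=(\xx_1,\dots,\xx_\eta,\ee_1,\ee_2,\dots)$ obtained by appending the unit vector basis of $\ell_2$. Since the direct sum is $\ell_2$, greedy selection for $\tilde\BB$ distributes over the two summands, so $\tilde\BB$ is a quasi-greedy basis of the superreflexive subspace $[\xx_1,\dots,\xx_\eta]\oplus_2\ell_2$ (its partial-sum projections are uniformly bounded because $\BB$ is a finite basic sequence), with quasi-greedy and semi-normalisation constants controlled by those of $\BB$, while $k_m[\tilde\BB]\ge k_m[\BB]$ for $2\le m\le\eta$. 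Theorem~\ref{LogEstimateSuperR} then gives $k_m[\BB]\le k_m[\tilde\BB]\le C(\log m)^{\alpha}$. The crux of the whole argument — and the step I expect to be the main obstacle — is the \emph{uniformity} of $C$ and $\alpha$: they must depend only on $a,b,K$ and $\XX$, not on $\eta$ nor on the varying subspace $[\xx_1,\dots,\xx_\eta]\oplus_2\ell_2$. For this I would use the quantitative version of Theorem~\ref{LogEstimateSuperR} already exploited in the Remark following Corollary~\ref{MainTheorem3}, in which the constants depend on the quasi-greedy and semi-normalisation constants together with a uniformly convex renorming of the ambient space; a uniformly convex renorming of $\XX$ induces one of $\XX\oplus_2\ell_2$ with modulus depending only on $\XX$, and this modulus is inherited by every subspace, so $C$ and $\alpha$ come out independent of $\BB$. (Alternatively, one can verify directly that the proof of Theorem~\ref{LogEstimateSuperR} is insensitive to whether the quasi-greedy basic sequence is finite or infinite.)

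Finally, (a)$\Leftarrow$ is the contrapositive of the construction in (b): if $\XX$ is not superreflexive, the sequence $(\BB_j)$ just built has uniformly bounded quasi-greedy constants, uniform semi-normalisation, $|\BB_j|\to\infty$, and $k_m[\BB_j]\gtrsim\log m$ for all $1\le m\le|\BB_j|$, which precludes any estimate $k_m[\BB_j]\le C(\log m)^{\alpha}$ with $\alpha<1$ holding throughout that range; and (b)$\Leftarrow$ is in the same way the contrapositive of (a)$\Rightarrow$.
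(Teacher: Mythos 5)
Your proof is correct, and its skeleton is exactly what the paper intends (the paper states the result without proof, calling it ``obvious with hindsight'', and the two Remarks preceding it point to the quantitative form of Theorem~\ref{LogEstimateSuperR} and to the fact that the greedy algorithm makes sense for finite basic sequences): the substantive implications are the forward halves of (a) and (b), obtained respectively from the superreflexive upper bound and from Theorem~\ref{MainTheorem} via truncation, localization of $L_m$, and finite representability; the remaining two implications are contrapositives of these, using $|\BB_j|\to\infty$ so that $C\log m$ eventually dominates $C'(\log m)^{\alpha}$. The one place you genuinely depart from the paper's route is the finite-to-infinite passage in (a): the Remark before the theorem suggests simply running (the proof of) Theorem~\ref{LogEstimateSuperR} on the finite sequence $(\xx_n)_{n=1}^{\eta}$ itself, whereas you append the unit vector basis of $\ell_2$ and invoke the infinite-dimensional statement. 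Your device is sound --- the appended system is quasi-greedy with constant controlled by $K$ because a greedy set of $(g,h)$ meets the first block in a greedy set of $g$ (this uses the standard fact that projections onto arbitrary greedy sets, not only the tie-broken $\GG_m$'s, are uniformly bounded for a quasi-greedy system), and clearly $k_m[\BB]\le k_m[\tilde{\BB}]$ --- but, as you yourself identify, it does not let you treat Theorem~\ref{LogEstimateSuperR} as a black box: since the spaces $[\xx_1,\dots,\xx_\eta]\oplus\ell_2$ vary with $\BB$, you must still check in \cite{AAGHR2015} that the constants depend on the ambient space only through super-invariants (the power types and constants of the moduli of an equivalent uniformly convex and uniformly smooth norm), which are indeed uniform over subspaces of the fixed space $\XX\oplus\ell_2$. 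Both routes therefore require the same inspection of \cite{AAGHR2015}; the paper's direct finite-sequence reading is marginally more economical, but yours is equally valid and has the merit of making the reduction explicit.
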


\subsection*{Acknowledgement} The authors would like to thank Prof.\ Thomas Schlum\-precht for helpful discussions and for bringing up to their attention  the paper \cite{PisierXu1987}.

\begin{bibsection}
\begin{biblist}

\bib{AA2016}{article}{
   author={Albiac, F.},
   author={Ansorena, J. L.},
   title={Characterization of 1-quasi-greedy bases},
   journal={J. Approx. Theory},
   volume={201},
   date={2016},
   pages={7--12},
}

\bib{AlbiacAnsorena2017}{article}{
   author={Albiac, F.},
   author={Ansorena, J. L.},
   title={Characterization of 1-almost greedy bases},
   journal={Rev. Mat. Complut.},
   volume={30},
   date={2017},
   number={1},
   pages={13--24},
}

\bib{AAGHR2015}{article}{
 author={Albiac, F.},
 author={Ansorena, J. L.},
 author={Garrig{\'o}s, G.},
 author={Hern{\'a}ndez, E.},
 author={Raja, M.},
 title={Conditionality constants of quasi-greedy bases in super-reflexive
 Banach spaces},
 journal={Studia Math.},
 volume={227},
 date={2015},
 number={2},
 pages={133--140},
}

\bib{AlbiacKalton2016}{book}{
 author={Albiac, F.},
 author={Kalton, N. J.},
 title={Topics in Banach space theory, 2nd revised and updated edition},
 series={Graduate Texts in Mathematics},
 volume={233},
 publisher={Springer International Publishing},
 date={2016},
 pages={xx+508},
 }

\bib{BenSha1988}{book}{
 author={Bennett, C.},
 author={Sharpley, R.},
 title={Interpolation of operators},
 series={Pure and Applied Mathematics},
 volume={129},
 publisher={Academic Press, Inc., Boston, MA},
 date={1988},
 pages={xiv+469},
}

\bib{DKK2003}{article}{
 author={Dilworth, S. J.},
 author={Kalton, N. J.},
 author={Kutzarova, D.},
 title={On the existence of almost greedy bases in Banach spaces},
 note={Dedicated to Professor Aleksander Pe\l czy\'nski on the occasion of
 his 70th birthday},
 journal={Studia Math.},
 volume={159},
 date={2003},
 number={1},
 pages={67--101},
}

\bib{GHO2013}{article}{
 author={Garrig{\'o}s, G.},
 author={Hern{\'a}ndez, E.},
 author={Oikhberg, T.},
 title={Lebesgue-type inequalities for quasi-greedy bases},
 journal={Constr. Approx.},
 volume={38},
 date={2013},
 number={3},
 pages={447--470},
 }

\bib{GW2014}{article}{
 author={Garrig{\'o}s, G.},
 author={Wojtaszczyk, P.},
 title={Conditional quasi-greedy bases in Hilbert and Banach spaces},
 journal={Indiana Univ. Math. J.},
 volume={63},
 date={2014},
 number={4},
 pages={1017--1036},
 }
 
 \bib{James1972}{article}{
 author={James, R. C.},
 title={Super-reflexive spaces with bases},
 journal={Pacific J. Math.},
 volume={41},
 date={1972},
 pages={409--419},
}

\bib{LinTza1977}{book}{
 author={Lindenstrauss, J.},
 author={Tzafriri, L.},
 title={Classical Banach spaces. I},
 note={Sequence spaces;
 Ergebnisse der Mathematik und ihrer Grenzgebiete, Vol. 92},
 publisher={Springer-Verlag, Berlin-New York},
 date={1977},
 pages={xiii+188},
}

\bib{MP1976}{article}{
 author={Maurey, B.},
 author={Pisier, G.},
 title={S\'eries de variables al\'eatoires vectorielles ind\'ependantes et
 propri\'et\'es g\'eom\'etriques des espaces de Banach},
 language={French},
 journal={Studia Math.},
 volume={58},
 date={1976},
 number={1},
 pages={45--90},
}

\bib{Pisier2016}{book}{
 author={Pisier, G.},
 title={Martingales in Banach Spaces},
 series={ Cambridge Studies in Advanced Mathematics},
 volume={155},
 publisher={Cambridge University Press},
 date={2016},
 pages={xxviii+561},
}

\bib{PisierXu1987}{article}{
 author={Pisier, G.},
 author={Xu, Q.~H.},
 title={Random series in the real interpolation spaces between the spaces $v_p$},
 conference={
 title={Geometrical aspects of functional analysis (1985/86)},
 },
 book={
 series={Lecture Notes in Math.},
 volume={1267},
 publisher={Springer, Berlin},
 },
 date={1987},
 pages={185--209},
 }

\bib{Singer1970}{book}{
 author={Singer, I.},
 title={Bases in Banach spaces. I},
 note={Die Grundlehren der mathematischen Wissenschaften, Band 154},
 publisher={Springer-Verlag, New York-Berlin},
 date={1970},
 pages={viii+668},
}

\bib{TemlyakovYangYe2011}{article}{
 author={Temlyakov, V. N.},
 author={Yang, M.},
 author={Ye, P.},
 title={Lebesgue-type inequalities for greedy approximation with respect
 to quasi-greedy bases},
 journal={East J. Approx.},
 volume={17},
 date={2011},
 number={2},
 pages={203--214},
}

\bib{TemlyakovYangYeB}{article}{
   author={Temlyakov, V. N.},
   author={Yang, M.},
   author={Ye, P.},
   title={Greedy approximation with regard to non-greedy bases},
   journal={Adv. Comput. Math.},
   volume={34},
   date={2011},
   number={3},
   pages={319--337},
}

\bib{Wo2000}{article}{
 author={Wojtaszczyk, P.},
 title={Greedy algorithm for general biorthogonal systems},
 journal={J. Approx. Theory},
 volume={107},
 date={2000},
 number={2},
 pages={293--314},
}

\end{biblist}
\end{bibsection}

\end{document}